\def\R{\hbox{\bf R}}
\def\I{{\cal I}}
\def\a{\alpha}
\def\C{{\mathbb C}}
\def\eps{\varepsilon}
\def \buor {(\bar u_0)_r}
\def \buo {\bar u_0}
\renewcommand{\thesubsection}{\arabic{section}.\arabic{subsection}}
\renewcommand{\theequation}{\arabic{section}.\arabic{equation}}
\newcommand{\ba}{\begin{eqnarray}}
\newcommand{\ea}{\end{eqnarray}}
\newtheorem{theo}{\bf Theorem}[section]
\newtheorem{lem}[theo]{\bf Lemma}
\newtheorem{pro}[theo]{\bf Proposition}
\newtheorem{defi}[theo]{\bf Definition}
\newtheorem{rem}[theo]{\bf Remark}
\newtheorem{claim}[theo]{\bf Claim}
\renewcommand{\R}{{\mathbb R}}
\newcommand{\e}{\varepsilon}
\newenvironment{Proofc}[1]{\smallskip\par\noindent\textsc{#1}\quad}%
 {\hfill$\Box$\bigskip\par}
\begin{document}

\title{\bf Uniqueness and existence of spirals moving by forced mean curvature motion}
\author{\renewcommand{\thefootnote}{\arabic{footnote}}
N. Forcadel\footnotemark[1], C. Imbert\footnotemark[1] \, and R. Monneau\footnotemark[2]}

\footnotetext[1]{Universit\'e Paris-Dauphine, CEREMADE, UMR CNRS 7534,
place de Lattre de Tassigny, 75775 Paris cedex 16}
\footnotetext[2]{Universit\'e Paris-Est, CERMICS, Ecole des Ponts ParisTech, 
6-8 avenue Blaise Pascal, 77455 Marne-la-Vall\'ee Cedex  2, France}
\maketitle

\vspace{20pt}


\begin{abstract}
  In this paper, we study the motion of spirals by mean curvature type
  motion in the (two dimensional) plane. Our motivation comes from
  dislocation dynamics; in this context, spirals appear when a screw
  dislocation line reaches the surface of a crystal. The first main
  result of this paper is a comparison principle for the corresponding
  parabolic quasi-linear equation. As far as motion of spirals are
  concerned, the novelty and originality of our setting and results
  come from the fact that, first, the singularity generated by the
  attached end point of spirals is taken into account for the first
  time, and second, spirals are studied in the whole space. Our second
  main result states that the Cauchy problem is well-posed in the
  class of sub-linear weak (viscosity) solutions.  We also explain how
  to get the existence of smooth solutions when initial data satisfy
  an additional compatibility condition.
\end{abstract}

\paragraph{AMS Classification:} 35K55, 35K65, 35A05, 35D40 

\paragraph{Keywords:} spirals, motion of interfaces, comparison
principle, quasi-linear parabolic equation, viscosity solutions, mean
curvature motion.

\section{Introduction}

In this paper we are interested in curves $(\Gamma_t)_{t>0}$ in $\R^2$
which are half lines with an end point attached at the origin.  These
lines are assumed to move with normal velocity
\begin{equation}\label{eq:law}
V_n= c+ \kappa
\end{equation}
where $\kappa$ is the curvature of the line and {\color{black} $c
  \in\R$} is a given constant. We will see that this problem reduces
to the study of the following quasi-linear parabolic equation in
non-divergence form
\begin{equation}\label{eq::4a}
 \displaystyle{r\bar{u}_t=c\sqrt{1 + r^2\bar{u}_r^2} 
   + \bar{u}_r \left(\frac{2+ r^2 \bar{u}_r^2}{1+ r^2 \bar{u}_r^2}\right)
   + \frac{r\bar{u}_{rr}}{1 + r^2\bar{u}_r^2}},
 \qquad t>0,r>0
\end{equation}
This paper is devoted to the proof of a comparison principle in the
class of sub-linear weak (viscosity) solutions and to the study of the
associated Cauchy problem.

\subsection{Motivations and known results}

\paragraph{Continuum mechanics.} From the viewpoint of
applications, the question of defining the motion of spirals in a two
dimensional space is motivated by the seminal paper of Burton, Cabrera
and Frank \cite{bcf51} where the growth of crystals with the vapor is
studied.  When a screw dislocation line reaches the boundary of the
material, atoms are adsorbed on the surface in such a way that a
spiral is generated; moreover, under appropriate physical assumptions,
these authors prove that the geometric law governing the dynamics of
the growth of the spiral is precisely given by \eqref{eq:law} where
$-c$ denotes a critical value of the curvature.  We mention that there
is an extensive literature in physics dealing with crystal growth in
spiral patterns. 

\paragraph{Different mathematical approaches.} First and foremost,
defining geometric flows by studying non-linear parabolic equations is
an important topic both in analysis and geometry. Giving references in
such a general framework is out of the scope of this paper. As far as
the motion of spirals is concerned, the study of the dynamics of
spirals have been attracting a lot of attention for more than ten
years.  Different methods have been proposed and developed in order to
define solutions of the geometric law~\eqref{eq:law}. {\color{black} A
  brief list is given here.} A phase-field approach was first proposed
in \cite{kp} and the reader is also referred to
\cite{on00,on03}. Other approaches have been used; for instance,
``self-similar'' spirals are constructed in \cite{ishimura98} by
studying an ordinary differential equation.  In \cite{gik02}, spirals
moving in (compact) annuli with homogeneous Neumann boundary condition
are constructed. From a technical point of view, the classical
parabolic theory is used to construct smooth solutions of the
associated partial differential equation; in particular, gradient
estimates are derived. We point out that in \cite{gik02}, the
geometric law is anisotropic, and is thus more general than
\eqref{eq:law}. In \cite{smereka00,ohtsuka03}, the geometric flow is
studied by using the level-set approach \cite{os88,cgg91,es91}. As in
\cite{gik02}, the author of \cite{ohtsuka03} considers spirals that
typically move into a (compact) annulus and reaches the boundary
perpendicularly.  \medskip

The starting point of this paper is the following fact: to the best of our
knowledge, no geometric flows were constructed to describe the dynamics
of spirals by mean curvature by taking into account both the
singularity of the pinned point and the unboundedness of the domain.

\paragraph{The equation of interest.}
We would like next to explain with more details the main aims of our
study. By parametrizing spirals, we will see (\textit{cf.}
Subsection~\ref{subsec:level-set}) that the geometric
law~\eqref{eq:law} is translated into the quasi-linear parabolic
equation \eqref{eq::4a}. We note that the coefficients are unbounded
(they explode linearly with respect to $r$) and that the equation is
singular: indeed, as $r \to 0$, either $r u_t \to 0$ or first order
terms explode.  Moreover, initial data are also unbounded. In such a
framework, we would like to achieve: uniqueness of weak (viscosity)
solutions for the Cauchy problem for a large class of initial data, to
construct a unique steady state (i.e. a solution of the form $\lambda
t + \bar\varphi(r)$), and finally to show the convergence of general
solutions of the Cauchy problem to the steady state as time goes to
infinity.  This paper is mainly concerned with proving a uniqueness
result and constructing a weak (viscosity) solution; the study of
large time asymptotic will be achieved in \cite{fr2}.

\paragraph{Classical parabolic theory.}
Classical parabolic theory \cite{fried,LSU} could help us to construct
solutions but there are major difficulties to overcome.  For instance,
Giga, Ishimura and Kohsaka \cite{gik02} studied a generalization of
\eqref{eq::4a} in domains of the form $R_{a,b}= {\color{black} \{ a <
  r < b \}}$ with $a>0$ and $b>0$, with Neumann boundary conditions at
$r=a,b$. Roughly speaking, we can say that our goal is to see what
happens when $a \to 0$ and $b \to \infty$. First, we mentioned above
that the equation is not (uniformly) parabolic in the whole domain
$R_{0,\infty}={\color{black} \left\{0<r<+\infty\right\}}$. Second, in
such analysis, the key step is to obtain gradient
estimates. Unfortunately, the estimates from \cite{gik02} in the case
of \eqref{eq::4a} explode as $a$ goes to $0$.  Third, once a solution
is constructed, it is natural to study uniqueness but {\em even in the
  setting of classical solutions} there are substantial difficulties.
To conclude, classical parabolic theory can be useful in order to get
existence results, keeping in mind that getting gradient estimates for
\eqref{eq::4a} is not at all easy, but such techniques will not help
in proving uniqueness.

Recently, several authors studied uniqueness of quasilinear equations
with unbounded coefficients (see for instance \cite{BBBL03,ck}) by
using viscosity solution techniques for instance. But unfortunately,
Eq.~\eqref{eq::4a} does not satisfy the assumptions of these papers.

\paragraph{Main new ideas.}
New ideas are thus necessary to handle these difficulties, both for
existence and uniqueness. As far as uniqueness is concerned, one has
to figure out what is the relevant boundary condition at $r=0$.
We remark that solutions of \eqref{eq::4a} satisfy at least formally a
Neumann boundary condition at the origin
\begin{equation}\label{neum}
0=c  + 2 \bar{u}_r \quad \mbox{for}\quad r=0.
\end{equation}
In some sense, we thus can say that the boundary condition is embedded
into the equation. Second, taking advantage of the fact that the
Neumann condition is compatible with the comparison principle,
viscosity solution techniques (also used in \cite{BBBL03}) permit us
to get uniqueness even if the equation is degenerate and also in a
very large class of weak (sub- and super-) solutions.

But there are remaining difficulties to be overcome.  First, the
Boundary Condition~\eqref{neum} is only true asymptotically (as $r\to
0$) and the fact that it is embedded into the equation makes it
difficult to use. We will overcome this difficulty by making a proper
change of variables (namely $x=\ln r$, see below for further details)
{\color{black} and proving a comparison principle (whose proof is
  rather involved; in particular many new arguments are needed in
  compare with the classical case) in this framework}.  Second,
classical viscosity solution techniques for parabolic equations do not
apply directly to \eqref{eq::4a} because of polar coordinates.  More
precisely, the equation do not satisfy the fundamental structure
conditions as presented in \cite[{Eq.~(3.14)}]{cil92} when polar
coordinates are used.  But the mean curvature equation has been
extensively studied in Cartesian coordinates \cite{es91,cgg91}. Hence
this set of coordinates should be used, at least far from the origin.

\paragraph{Perron's method and smooth solutions.}
We hope we convinced the reader that it is really useful, if not
mandatory, to use viscosity solution techniques to prove uniqueness.
It turns out that it can also be used to construct solutions by using
Perron's method \cite{ishii}. This technique requires to construct
appropriate barriers and we do so for a large class of initial
data. The next step is to prove that these weak solutions are smooth
if additional growth assumptions on derivatives of initial data are
imposed; we get such a result by deriving non-standard gradient
estimates (with viscosity solution techniques too).

We would like also to shed some light on the fact that this notion of
solution is also very useful when studying large time asymptotic (and
more generally to pass to the limit in such non-linear
equations). Indeed, convergence can be proved by using the
half-relaxed limit techniques if one can prove a comparison
principle. See \cite{fr2} for more details.

\subsection{The geometric formulation}
\label{subsec:level-set}

In this section, we make precise the way spirals are 
defined. We will first define them as parametrized curves.

\paragraph{Parametrization of spirals.}
We look for interfaces $\Gamma$ parametrized as follows: 
$\Gamma = \{ r e^{ -i \bar u(r)} : r \ge 0 \} {\color{black} \ \subset \C}$ for some
function $\bar u :[0,+\infty) \to \R$.  If now the spiral
moves, i.e. evolves with a time variable $t >0$, then the function
$\bar u$ also depends on $t>0$.  
\begin{defi}[Spirals]
 A moving \emph{spiral} is a family of curves $(\Gamma_t)_{t>0}$ of
 the following form
\begin{equation}\label{spiral:param}
\Gamma_t = \{ r e^{i\theta} : r >0, \theta \in \R, \; \theta + \bar u (t,r) = 0 \}
\end{equation}
for some function $\bar u : [0,+\infty) \times [0,+\infty) \to \R$.
This curve is oriented by choosing the normal vector field equal to
$(-i + r \partial_r \bar{u} (t,r))e^{-i\bar{u}(t,r)}$.
\end{defi}
\begin{figure}[ht]
\centering\epsfig{figure=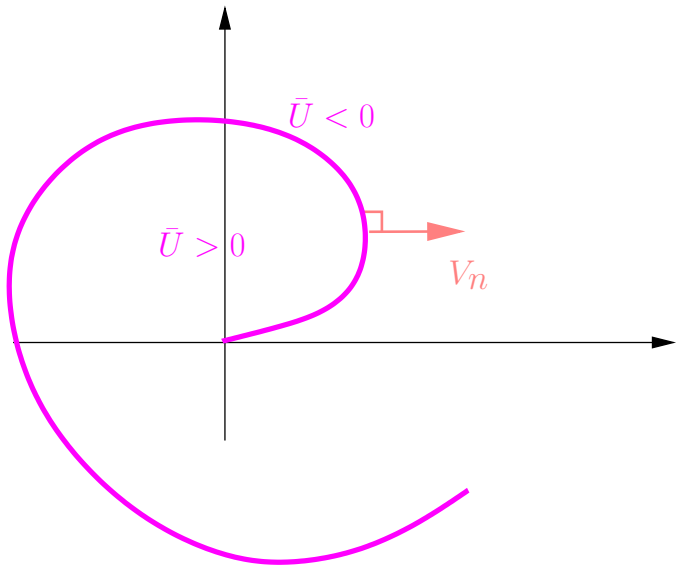,width=90mm}
\caption{Motion of the spiral}\label{f1}
\end{figure}
With the previous definition in hand, the geometric law~\eqref{eq:law}
implies that $\bar u$ satisfies \eqref{eq::4a} with the initial condition
\begin{equation}\label{eq::4b}
\bar{u}(0,r)=\bar{u}_0(r) \quad \mbox{for}\quad r\in (0,+\infty) \, .
\end{equation}

\paragraph{Link with the level-set approach.}

In view of \eqref{spiral:param}, we see that our approach is closely
related to the level-set one. We recall that the level-set approach
was introduced in \cite{os88,es91,cgg91}; in particular, it permits to
construct an interface moving by mean curvature type motion, that is to say
satisfying the geometric law~\eqref{eq:law}. It consists in defining
the interface $\Gamma_t$ as the $0$-level set of a function $\tilde{U}
(t,\cdot)$ and in remarking that the geometric law is verified only if
$\tilde{U}$ satisfies a non-linear evolution equation of parabolic
type. In an informal way, we can say that the quasi-linear evolution
equation~\eqref{eq::4a} is a "graph" equation associated with the
classical mean curvature equation (MCE), but written in polar
coordinates.

More precisely, if ${\color{black} \tilde{U} (t,X)=\theta+ \bar u (t,r)}$ 
with $X {\color{black} =(r\cos\theta,r\sin\theta)}\in \R^2$, then $\bar u$ will satisfy
\eqref{eq::4a} as long as $\tilde{U}$ solves {\color{black} the following level-set equation}
\begin{equation}\label{eq::rm2}
\tilde{U}_t = c |D_X\tilde{U}| 
 + \widehat{D_X\tilde{U}}^\perp \cdot D^2_{XX}\tilde{U} \;  \widehat{D_X\tilde{U}}^\perp
{\color{black} \quad \mbox{for}\quad X\not= 0}
\end{equation}
(where $\hat{p}=p/|p|$ and $p^\perp =(-p_2,p_1)$ for $p=(p_1,p_2)\in\R^2$).
{\color{black} Notice that the angle $\theta$ is multivalued, i.e. only defined modulo $2\pi$.
Such an approach is for instance systematically developed in \cite{ohtsuka03}.}

\subsection{Main results}

\paragraph{Comparison principle.}
{\color{black} Our first main result is a comparison principle}: it says that all
sub-solutions lie below all super-solutions, provided they are ordered
at initial time.
\begin{theo}[Comparison principle for
 \eqref{eq::4a}] \label{thm comp spirales sous lin en r}
Assume that {\color{black} $\bar{u}_0:(0,+\infty)\to \R$} is a {\color{black} globally} Lipschitz continuous function. 
Consider a sub-solution $\bar u$ and a super-solution $\bar v$ of
{\color{black} \eqref{eq::4a},\eqref{eq::4b} (in the sense of Definition \ref{defi::1})} 
such that there exist $C_1>0$ {\color{black} and
for all $t \in [0,T)$} and $r>0$,
{\color{black} \begin{equation}\label{eq:uetv}
\bar u(t,r)  -\bar u_0(r)\le C_1 \quad \text{and} \quad
\bar v(t,r) -\bar u_0(r) \ge -C_1.
\end{equation}}
If $\bar u (0,r) \le \bar u_0 (r) \le \bar v(0,r)$ for all $r \ge 0$, 
then $\bar u \le \bar v$ in {\color{black} $[0,T) \times (0,+\infty)$.}
\end{theo}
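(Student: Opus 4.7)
The proof is by contradiction, following the viscosity comparison framework but adapted to the singularity at $r=0$ and to the unbounded domain via the change of variables $x = \ln r$ announced by the authors. Setting $u(t,x) = \bar u(t,e^x)$ and $v(t,x) = \bar v(t,e^x)$, a direct computation rewrites \eqref{eq::4a} as
\[
 e^{2x}\, u_t = c\, e^x \sqrt{1+u_x^2} + u_x + \frac{u_{xx}}{1+u_x^2}, \qquad t>0,\ x\in\R.
\]
The second-order operator on the right is now independent of $x$ and degenerate elliptic in the standard sense, so the Crandall--Ishii--Lions structure condition is satisfied globally; the polar singularity at the origin is repackaged as the vanishing of the coefficient $e^{2x}$ of $u_t$ as $x\to -\infty$, which corresponds precisely to the embedded Neumann condition \eqref{neum}.

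Suppose, for contradiction, that $M := \sup_{[0,T)\times \R}(u - v) > 0$. After the usual reduction replacing $u$ by $u - \eta/(T-t)$ to obtain a strict sub-solution, I would use the sublinear growth encoded by \eqref{eq:uetv} together with the Lipschitz bound on $\bar u_0$, which gives $|u(t,x)|\le L e^x + C$ globally, to localize the supremum through a penalization of the form
\[
 \Psi_\eps(t,s,x,y) = u(t,x) - v(s,y) - \frac{(x-y)^2}{2\eps} - \frac{(t-s)^2}{2\eps} - \mu\,\chi(x) - \mu\,\chi(y),
\]
for a carefully chosen weight $\chi$ dominating the growth of $u$ and $v$ as $x\to\pm\infty$. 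Once the supremum is attained at an interior point $(\hat t, \hat s, \hat x, \hat y)$, the parabolic Crandall--Ishii lemma furnishes jets $(a_u, p_\eps, X_\eps)$ and $(a_v, p_\eps, Y_\eps)$ with $a_u - a_v = (\hat t - \hat s)/\eps$ and $X_\eps \le Y_\eps$. Subtracting the sub- and super-solution inequalities and using monotonicity in the Hessian together with the Lipschitz dependence of $c\, e^x \sqrt{1+p^2}$ in $x$ should yield a contradiction as $\eps \to 0$ and then $\mu,\eta \to 0$.

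The decisive difficulty, and where the ``many new arguments'' alluded to in the introduction presumably enter, is the design of the weight $\chi$: it must simultaneously (i) dominate the essentially exponential-in-$x$ growth at $x\to +\infty$ induced by the Lipschitz-in-$r$ growth of the initial data, without its derivatives destroying the penalization argument; and (ii) prevent the maximizing sequence from escaping to $x\to -\infty$, where the equation degenerates and the embedded Neumann condition must be invoked in lieu of a standard boundary condition. I expect $\chi$ to be constructed in a multi-scale manner (or alternatively by splitting the analysis into regions $\{x\le -R\}$ and $\{x\ge -R\}$ with a matching argument at the interface), and verifying compatibility of the Neumann-type behaviour with the chosen penalization as $x\to -\infty$ is likely the main technical hurdle of the proof.
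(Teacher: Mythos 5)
Your outline correctly identifies the change of variables $x=\ln r$ and the doubling-of-variables scheme, but it has a genuine gap exactly where the paper says the difficulty lies: the region $r\to+\infty$. You keep logarithmic coordinates on all of $\R$ and hope that a growth weight $\chi$ will confine the maximum; the claim that after multiplying by $e^{2x}$ ``the Crandall--Ishii--Lions structure condition is satisfied globally'' is not true. In your normalization the coefficient $c\,e^{x}\sqrt{1+p^2}$ is unbounded as $x\to+\infty$, so the doubling error behaves like $e^{\hat x}|\hat x-\hat y|\,(1+|p_\eps|)$ with an unbounded prefactor, and since $u,v$ themselves grow like $e^{x}$, any weight $\chi$ strong enough to localize the supremum must grow at least like $e^{x}$, so its derivatives $\mu\chi'$, $\mu\chi''$ entering the gradient and Hessian bounds are not small uniformly in the location of the maximum. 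Moreover, with the time coefficient $e^{2x}$ the two viscosity inequalities read $e^{2\hat x}a_u\le\cdots$ and $e^{2\hat y}a_v\ge\cdots$, so the identity $a_u-a_v=(\hat t-\hat s)/\eps$ can no longer be exploited by simple subtraction. This is precisely why the authors state that the logarithmic change of variables ``does not seem to work far from the origin'': their proof does not use a weight at all, but passes to $U(t,x,\theta)=\theta+u(t,x)$, $V(t,x,\theta)=\theta+v(t,x)$, uses the level-set equation in Cartesian coordinates for large $r$, and glues the two coordinate systems with the interpolation map $\psi$ of Lemma~\ref{lem:psi}, penalizing $e^{Kt}|\psi(x,\theta)-\psi(y,\sigma)|^2/(2\eps)$ and splitting into three cases according to where the maximum point falls.

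The second gap is at the other end, $x\to-\infty$. You assert that the degeneracy there ``corresponds to the embedded Neumann condition'' and that verifying its compatibility with the penalization is the hurdle, but you give no mechanism for absorbing the term $c\,e^{-\hat x}\sqrt{1+p^2}-c\,e^{-\hat y}\sqrt{1+q^2}$ (in the form \eqref{eq:polar-spir}, whose coefficients blow up as $x\to-\infty$). In the paper no Neumann-type argument appears in the proof; the contradiction comes from a quantitative structural fact isolated in Theorem~\ref{thm comp spirales} and Lemma~\ref{lem:estim1}: writing $b(q,p)=c\sqrt{1+p^2}+q$ and $\sigma(p)=(1+p^2)^{-1/2}$, the strict monotonicity of $b$ in $q$ ($\delta_2=1$) makes the drift $e^{-2x}u_x$ produce a good term $-2\delta_2\,e^{-2x}e^{Kt}(x-y)^2/\eps$ which dominates both the $c$-term (after Cauchy--Schwarz) and the second-order doubling error $\|\sigma\|_\infty^2\,e^{-2x}e^{Kt}(x-y)^2/\eps$, precisely because $\|\sigma\|_\infty^2=1<2=2\delta_2$. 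Without this absorption argument near $x=-\infty$, and without the coordinate-switching device (or some substitute) for large $r$, the proposed proof does not close.
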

\begin{rem}
The growth of the sub-solution $u$ and the super-solution $v$ is made precise
by assuming Condition~\eqref{eq:uetv}. Such a condition is motivated by
the large time asymptotic study  carried out in \cite{fr2}; indeed, we construct
in \cite{fr2} a global solution of the form $\lambda t + \bar u_0(r)$. 
\end{rem}
The proof of Theorem \ref{thm comp spirales sous lin en r} is rather
involved and we will first state and prove a comparison principle in
the set of bounded functions for a larger class of equations (see
Theorem~\ref{thm comp spirales}). We do so in order to exhibit
the structure of the equation that makes the proof work. We then 
turn to the proof of Theorem \ref{thm comp spirales sous lin en r}.

Both proofs are based on the doubling of variable method, which
consists in regularizing the sub- and super-solutions.  Obviously,
this is a difficulty here because one end point of the curve is
attached at the origin and the doubling of variables at the origin is
not well defined. To overcome this difficulty, we work with
logarithmic coordinates $x=\ln r$ for $r$ close to $0$.  But then the
equation becomes
{\color{black} $$u_t =  c e^{-x} \sqrt{1 + u_x^2} + e^{-2x}u_x
+e^{-2x} \frac{u_{xx}}{1 + u_x^2}$$}
We then apply the doubling of variables in the $x$ coordinates. There
is a persistence of the difficulty, because we have now to bound terms
like
$$
A:= ce^{-x}\sqrt{1+u_x^2} - ce^{-y}\sqrt{1+v_y^2}
$$
that can blow up as $x,y \to -\infty$.  We are lucky enough to be able
to show roughly speaking that $A$ can be controlled by {\color{black}
  the doubling of variable of the term $e^{-2x}u_x$ which appears to
  be the main term (in a certain sense) as $x$ goes to $-\infty$.}

In view of the study from \cite{fr2}, $\bar u_0$ has to be chosen
sub-linear in Cartesian coordinates and thus so are the sub- and
super-solutions to be compared. The second difficulty arises when
passing to logarithmic coordinates for large $r$'s; indeed, the
sub-solution and the super-solution then grow exponentially in $x=\ln
r$ at infinity and we did not manage to adapt the previous reasoning
in this setting. There is for instance a similar difficulty when
dealing with the mean curvature equatio. Indeed, in this framework,
for super-linear initial data, the uniqueness of the solution is not
known in full generality (see \cite{BBBL03,ck}).  In other words, the
change of variables do not seem to work far from the origin. We thus
have to stick to Cartesian coordinates for large $r$'s {\color{black}
  (using a level-set formulation)} and see the equation in different
coordinates when $r$ is either small or large (see
Section~\ref{sec:sublinear}).

\paragraph{Existence theorem.}
In order to get an existence theorem, we have to restrict the growth
of derivatives of the initial condition. We make the following
assumptions: the initial condition is globally Lipschitz continuous
and its mean curvature is bounded. We recall that the mean curvature
of a spiral parametrized by $\bar u$ is defined by
$$
\kappa_{\bar u}(r)=u_r\left(\frac {2+(r
   \bar u_r)^2}{(1+(r\bar u_r)^2)^{\frac32}}\right) + \frac{r \bar
 u_{rr}}{(1+(r\bar u_r)^2)^{\frac 32}}.
$$
We can now state our second main result.
\begin{theo}[The general Cauchy problem]\label{th:cauchywc}
 Consider $\bar{u}_0 \in {\color{black} W^{2,\infty}_{loc}(0,+\infty)}$. Assume that $\bar{u}_0$
 is {\color{black} globally Lipschitz continuous and that $\kappa_{\buo}\in L^\infty(0,+\infty)$.}
 Then there exists a unique solution $u$ of
 \eqref{eq::4a},\eqref{eq::4b} {\color{black}on $[0,+\infty)\times (0,+\infty)$ (in the sense of Definition \ref{defi::1})}  
such that for all $T>0$, there exists
 {\color{black} $\bar C_T >0$} such that for all $t\in {\color{black} [0,T)}$ and $r>0$,
\begin{equation}\label{baru:baru0sst}
|\bar u(t,r) -\bar u_0(r)|\le {\color{black} \bar C_T} .
\end{equation}
Moreover, $\bar u$ is Lipschitz continuous with respect to space and
$\frac 12$-H\"older continuous with respect to time. More precisely,
there exists a constant $C$ depending only on $|\buor|_\infty$ and
$|\kappa_{\buo}|_\infty$ such that
$$
|\bar u (t, r+\rho)-\bar u(t,r)|\le C|\rho|
$$
and
\begin{equation}\label{eq::rm1}
|\bar u (t+h,r)-\bar u(t,r)|\le C\sqrt{|h|}.
\end{equation}
\end{theo}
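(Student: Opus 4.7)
The plan is to deduce uniqueness from Theorem~\ref{thm comp spirales sous lin en r}, to construct the solution via Perron's method with barriers adapted to the singularity at $r=0$, and to obtain the regularity estimates through further comparison arguments. Uniqueness is immediate: if $\bar u_1,\bar u_2$ are two solutions satisfying \eqref{baru:baru0sst} with constants $\bar C_T^{(1)},\bar C_T^{(2)}$, then Theorem~\ref{thm comp spirales sous lin en r} applied with $C_1=\max(\bar C_T^{(1)},\bar C_T^{(2)})$ gives $\bar u_1\le\bar u_2$, and by symmetry the reverse inequality holds.

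For existence I would apply Ishii's version of Perron's method, so the main task is to produce a global subsolution and a global supersolution that agree with $\buo$ at $t=0$ and satisfy the growth bound \eqref{baru:baru0sst}. A direct substitution of $\buo$ into the right-hand side of \eqref{eq::4a} yields
\[
c\sqrt{1+r^2\buor^2}+\buor\!\left(\frac{2+r^2\buor^2}{1+r^2\buor^2}\right)+\frac{r\,(\buo)_{rr}}{1+r^2\buor^2}=\sqrt{1+r^2\buor^2}\,\bigl(c+\kappa_{\buo}(r)\bigr),
\]
which is bounded in terms of $|\buor|_\iy$ and $|\kappa_{\buo}|_\iy$. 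This makes $\buo\pm Ct$ natural candidates for barriers; however, because of the degenerate prefactor $r$ in front of the time derivative, they fail to be sub/super-solutions of \eqref{eq::4a} as $r\to 0^+$ unless the embedded Neumann condition $c+2\buor(0)=0$ happens to hold. In the general case I would approximate $\buo$ by a family $\buo^\eps$ that coincides with $\buo$ on $[\eps,+\iy)$ and is modified on $[0,\eps]$ so as to satisfy the Neumann condition at the origin, keeping the Lipschitz and curvature norms uniformly bounded in $\eps$. Perron's method then produces a viscosity solution $\bar u^\eps$ trapped between $\buo^\eps\pm Ct$; Theorem~\ref{thm comp spirales sous lin en r} gives uniform equicontinuity of the family $\{\bar u^\eps\}$, and a subsequence converges to the desired solution satisfying \eqref{baru:baru0sst}.

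The Lipschitz estimate in $r$ is obtained by a viscosity Bernstein/doubling-of-variables argument that propagates the initial Lipschitz bound $|\buor|_\iy$ through the parabolic equation (equivalently, one may check that the inf/sup envelopes in Perron's construction are uniformly Lipschitz because the barriers are). For the $\tfrac12$-H\"older estimate in time, I would construct, at each $(t_0,r_0)$ with $r_0>0$, a local super-solution of the form $\psi(t,r)=\bar u(t_0,r)+A\sqrt{t-t_0}+B(r-r_0)^2/\sqrt{t-t_0}$ with $A,B$ depending only on $|\buor|_\iy$ and $|\kappa_{\buo}|_\iy$, together with an analogous local sub-solution; comparison at $(t_0+h,r_0)$ then yields \eqref{eq::rm1}. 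The main obstacles I anticipate are making the barrier modification $\buo^\eps$ rigorous while preserving the curvature bound uniformly in $\eps$, and designing the H\"older-in-time barrier compatibly with the degenerate prefactor $r$ and the unbounded coefficients of \eqref{eq::4a}.
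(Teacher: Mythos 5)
Your uniqueness step, the spatial Lipschitz bound by doubling of variables, and the approximation of $\bar u_0$ by data $\bar u_0^\e$ satisfying the compatibility condition at the origin are all in line with the paper (Theorem~\ref{thm comp spirales sous lin en r}, Proposition~\ref{prop:gradestim}, Claim~\ref{claim}). The genuine gap is in the passage to the limit $\e\to 0$ and in the time estimate. The barriers $\bar u_0^\e\pm C^\e t$ produced by Proposition~\ref{prop:barriers} have a constant $C^\e=\sup_r \frac1r|\bar F(r,(\bar u_0^\e)_r,(\bar u_0^\e)_{rr})|$, which is governed by the constant in the compatibility condition \eqref{cond:compatibilite} for $\bar u_0^\e$; since $\bar u_0^\e=\bar u_0$ for $r\ge 2\e$ and $c+\kappa_{\bar u_0}$ is in general of order one there, this constant blows up like $1/\e$. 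Hence the trapping $|\bar u^\e-\bar u_0^\e|\le C^\e t$ is not uniform in $\e$, and the comparison principle alone does not give the ``uniform equicontinuity'' you invoke, nor the uniform bound \eqref{baru:baru0sst} for a limit. The missing ingredient is a barrier valid \emph{without} any compatibility condition and with constants depending only on $|(\bar u_0)_r|_\infty$ and $|\kappa_{\bar u_0}|_\infty$: the paper uses $\bar u^\pm(t,r)=\bar u_0(r)\pm \bar C t/r\pm B(t)$ (Proposition~\ref{prop:barriers2}), whose $1/r$ term absorbs the singular first-order terms near $r=0$. Combining this with the uniform spatial Lipschitz bound and minimizing over the comparison radius $r_0$ the inequality $|\bar u(t,0)-\bar u_0(0)|\le (L_0+C_0)r_0+\bar C t/r_0+B(t)$ yields $|\bar u^\e(t,r)-\bar u_0^\e(r)|\le C\sqrt t+B(t)$ with $C$ independent of $\e$ (Propositions~\ref{pro:holder1}--\ref{pro:holder2}); this is what allows the limit $\e\to 0$ and gives \eqref{baru:baru0sst}, and it also produces the $\sqrt t$ modulus at the initial time, which is then propagated to all times by comparing $\bar u(\cdot+h,\cdot)\mp C\sqrt h$ with $\bar u$ (as in Step 2 of Proposition~\ref{prop:gradestim}).

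Your alternative route to \eqref{eq::rm1} via the local super-solution $\psi(t,r)=\bar u(t_0,r)+A\sqrt{t-t_0}+B(r-r_0)^2/\sqrt{t-t_0}$ does not work as stated. First, $\psi_t$ contains the term $-B(r-r_0)^2/\bigl(2(t-t_0)^{3/2}\bigr)$, so $r\psi_t$ behaves like $-r(r-r_0)^2$ for large $r$, while $\bar F(r,\psi_r,\psi_{rr})$ only behaves like $c\,r^2|\psi_r|+O(r)$; whatever the sign of $c$, the super-solution inequality $r\psi_t\ge \bar F$ fails far from $r_0$, so the comparison principle cannot be applied on $(0,+\infty)$. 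Second, $\bar u(t_0,\cdot)$ is only Lipschitz (the paper does not even know the general solution is smooth), so $\psi$ is not an admissible barrier and the constants $A,B$ cannot be taken to depend only on $|(\bar u_0)_r|_\infty$ and $|\kappa_{\bar u_0}|_\infty$, since that would require a curvature bound on $\bar u(t_0,\cdot)$ which is not available. The paper's scheme avoids both problems: the quantitative estimate is proved only at the initial time, where the curvature bound is an assumption, and the time-shift comparison argument transfers it to arbitrary $t_0$ using nothing but the comparison principle.
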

{\color{black} 
\begin{rem}
  Notice that Theorem \ref{th:cauchywc} allows us to consider an
  initial data $\bar u_0$ which does not satisfy the compatibility
  condition (\ref{neum}), like for instance $\bar u_0\equiv 0$ with
  $c=1$. Notice also that we do not know if the solution constructed
  in Theorem \ref{th:cauchywc} is smooth (i.e. belongs to
  $C^\infty((0,+\infty)^2)$).
\end{rem}

To get such a result, we first construct smooth solutions requiring that 
the compatibility condition (\ref{neum}) is satisfied by the
initial datum, like in the following result.}

\begin{theo}[Existence and uniqueness of smooth solutions for the Cauchy problem]\label{th:cauchy}
 Assume that {\color{black} $\bar{u}_0\in W^{2,\infty}_{loc}(0,+\infty)$ with 
$$(\bar{u}_0)_r \in W^{1,\infty}(0,+\infty)\quad \mbox{or}\quad \kappa_{\bar u_0}\in L^\infty(0,+\infty)$$} 
and
 that it satisfies the following compatibility condition for some  $r_0>0$:
\begin{equation}\label{cond:compatibilite}
|c+\kappa_{\buo}|\le C r\quad {\rm for }\quad 0\le r\le {\color{black} r_0}.
\end{equation}
Then there exists a unique continuous function: $\bar u :
[0,+\infty)\times [0,+\infty)$ which is $C^\infty$ in {\color{black} $(0,+\infty)
\times (0,+\infty)$}, which satisfies \eqref{eq::4a},\eqref{eq::4b} 
{\color{black} (in the sense of Definition \ref{defi::1})},
and such that there exists $\bar C >0$ such that
{\color{black} 
$$|\bar u(t,r+\rho) -\bar u(t,r)|\le \bar C |\rho|$$
and
\begin{equation}\label{baru:baru0}
|\bar u(t+h,r) -\bar u(t,r)|\le \bar C |h|.
\end{equation}}
\end{theo}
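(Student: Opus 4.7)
The strategy splits into existence by an approximation procedure on bounded annular domains and uniqueness obtained directly from Theorem \ref{thm comp spirales sous lin en r}. The key observation is that \eqref{eq::4a} can be rewritten as
\[
 r\bar u_t \;=\; \sqrt{1+r^2\bar u_r^2}\,\bigl(c+\kappa_{\bar u}\bigr),
\]
so that \eqref{cond:compatibilite} is precisely what makes the right-hand side divided by $r$ bounded up to $r=0$; without it, the first-order terms in \eqref{eq::4a} would force $\bar u_t$ to blow up like $1/r$ near the origin.

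For existence, I would approximate on $[0,T]\times(a_n,b_n)$ with $a_n\downarrow 0$ and $b_n\uparrow+\infty$, imposing the exact Neumann condition $c+\kappa_{\bar u_n}=0$ at $r=a_n$ (which is \eqref{neum} made pointwise) and $\partial_r\bar u_n=(\bar u_0)_r$ at $r=b_n$, starting from a mollification of $\bar u_0$ that preserves the global Lipschitz constant, the bound on $\kappa_{\bar u_0}$, and the compatibility condition. Since the equation is uniformly parabolic on $(a_n,b_n)$ with smooth coefficients, classical quasi-linear theory \cite{LSU,gik02} yields a smooth solution $\bar u_n$.

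The heart of the proof is to obtain uniform (in $n$) a priori estimates. For the time derivative, one differentiates the equation in $t$, so that $v_n:=(\bar u_n)_t$ satisfies a linear parabolic equation with smooth coefficients; the parabolic maximum principle then gives
\[
 \|v_n(t,\cdot)\|_\infty \;\le\; \|v_n(0,\cdot)\|_\infty \;\le\;
 \sup_{r>0}\,\frac{\sqrt{1+r^2(\bar u_0)_r^2}}{r}\,\bigl|c+\kappa_{\bar u_0}(r)\bigr|,
\]
which is finite precisely thanks to \eqref{cond:compatibilite}. For the spatial Lipschitz bound, I would compare $\bar u_n(t,r+\rho)$ with $\bar u_n(t,r)\pm K|\rho|$ using the comparison principle on the annulus, absorbing the difference between the equation and its $\rho$-translate (which is of lower order in the polar setting) into the uniform bound on $v_n$.

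With these estimates in hand, $\bar u_n$ converges along a subsequence, locally uniformly on $[0,T]\times(0,+\infty)$, to a Lipschitz function $\bar u$ solving \eqref{eq::4a} by viscosity stability. Interior smoothness follows from a standard Schauder bootstrap since the equation is uniformly parabolic on compact subsets of $(0,+\infty)^2$, giving $\bar u\in C^\infty((0,+\infty)^2)$. Continuity up to $\{r=0\}$ is inherited from the uniform time-Lipschitz bound together with the continuity of $\bar u_0$ at $r=0$, and \eqref{eq::4b} is attained by construction. Uniqueness is then immediate from Theorem \ref{thm comp spirales sous lin en r} applied to any two solutions satisfying \eqref{baru:baru0}. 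The main obstacle is to verify that the chosen boundary condition at $r=a_n$ does not introduce a spurious contribution in the limit $a_n\to 0$ and to derive the spatial Lipschitz estimate in a way uniform in $n$; both reduce to delicate but essentially routine maximum principle arguments tailored to the polar geometry.
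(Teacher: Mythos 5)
Your overall architecture (approximate, get uniform estimates, pass to the limit, bootstrap, uniqueness by Theorem~\ref{thm comp spirales sous lin en r}) is reasonable, but the step you call ``essentially routine'' is precisely the known obstruction, and as written it has genuine gaps. First, the boundary condition you impose at $r=a_n$, namely $c+\kappa_{\bar u_n}=0$, is not a Neumann condition: it involves $\bar u_{rr}$, so it is a second-order boundary condition not covered by the classical quasilinear theory you invoke (\cite{LSU,gik02} treat first-order/Neumann conditions such as those in \eqref{eq:1000} or $\partial_r u$ prescribed). Moreover, combined with \eqref{eq::4a} it forces $(\bar u_n)_t=0$ on $\{r=a_n\}$, over-determining the problem, and it is incompatible at the parabolic corner since the initial datum only satisfies $|c+\kappa_{\buo}|\le Cr$, not $c+\kappa_{\buo}(a_n)=0$; this corner incompatibility also undermines the differentiation in $t$ that your bound $\|(\bar u_n)_t(t,\cdot)\|_\infty\le\|(\bar u_n)_t(0,\cdot)\|_\infty$ relies on. Second, and more seriously, the uniform-in-$n$ spatial Lipschitz estimate cannot be obtained by ``absorbing the difference between the equation and its $\rho$-translate'': the first-order coefficients of \eqref{eq::4a} depend on $r$ at leading order (the terms $c\sqrt{1+r^2\bar u_r^2}$ and $\bar u_r\frac{2+r^2\bar u_r^2}{1+r^2\bar u_r^2}$), so translation comparison is not a lower-order perturbation; indeed the paper's own gradient bound \eqref{estim:gradient} degrades the Lipschitz constant from $L_0$ to $\max(1,L_0)$ on one side, which already shows a naive translate comparison cannot close. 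The paper explicitly notes that the gradient estimates of \cite{gik02} on annuli blow up as the inner radius tends to $0$, which is exactly why it abandons this route. Finally, your last step ``standard Schauder bootstrap'' applied to the merely Lipschitz limit skips the needed intermediate regularity: one must first bound $\bar u_{rr}$ (from the equation, in the viscosity sense), get local $C^{1,1}$ in $r$, show the equation holds a.e.\ (Alexandrov), pass through $W^{2,1;p}_{loc}$ and the Sobolev embedding to obtain $\bar u_r\in C^{\alpha,\alpha/2}_{loc}$, and only then apply Schauder.

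For contrast, the paper avoids the annulus approximation entirely: the compatibility condition \eqref{cond:compatibilite} makes $\bar C=\sup_r \frac1r|\bar F(r,(\buo)_r,(\buo)_{rr})|$ finite, so $\buo(r)\pm\bar C t$ are global barriers; Perron's method then yields a viscosity solution on the whole half-line, uniqueness comes from Theorem~\ref{thm comp spirales sous lin en r}, the spatial Lipschitz bound is proved directly on this solution by a doubling-of-variables argument in logarithmic coordinates (exploiting the monotonicity of an explicit function $g$), the time-Lipschitz bound \eqref{baru:baru0} follows by comparing $\bar u(t+h,\cdot)\pm\bar C h$ with $\bar u$ via the comparison principle, and smoothness is obtained by the regularity chain sketched above. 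If you want to salvage your scheme, you would at minimum need to replace the boundary condition at $r=a_n$ by an admissible first-order one and prove interior gradient and time-derivative bounds independent of $a_n$ — which is the heart of the matter, not a routine maximum-principle exercise.
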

{\color{black} \begin{rem} Condition \eqref{cond:compatibilite} allows
    us also to improve the H\"{o}lder estimate (\ref{eq::rm1}) and to
    replace it by the Lipschitz estimate (\ref{baru:baru0}).  With the
    help of this Lipschitz estimate (\ref{baru:baru0}), we can
    conclude that the solution constructed in Theorem \ref{th:cauchy}
    is smooth.  Notice also that our space-time Lipschitz estimates on
    the solution allow us to conclude that $\bar u(t,\cdot)$ satisfies
    (\ref{cond:compatibilite}) with the constant $C$ replaced by some
    possible higher constant.  This implies in particular that $\bar
    u(t,\cdot)$ satisfies the compatibility condition (\ref{neum}) for
    all time $t\ge 0$.
\end{rem}}

{\color{black} 
\paragraph{Open questions.}$\mbox{ }$\\
\noindent {\bf A. Weaker conditions on the initial data }\\
It would be interesting to investigate the existence/non-existence and
uniqueness/non-uniqueness of solutions when we allow the initial data
$\bar u_0$ to be less than globally Lipschitz.  For instance what
happens when the initial data describes an infinite spiral close to
the origin $r=0$, with either $\bar u_0(0^+)=+\infty$ or $\bar
u_0(0^+)=-\infty$?  On the other hand, what happens if the growth of
$\bar u_0$ is super-linear
as $r$ goes to $+\infty$?\\
\noindent {\bf B. More general shapes than spiral }\\
One of our main limitation to study only the evolution of spirals in
this paper is that we were not able to prove a comparison principle in
the case of the general level-set equation (\ref{eq::rm2}).  The
difficulty is the fact that the gradient of the level-set function
$\tilde{U}$ may degenerate exactly at the origin where the curve is
attached. The fact that a spiral-like solution is a graph
$\theta=-\bar u(t,r)$ prevents the vanishing of the gradient of
$\tilde{U}$ at the origin $r=0$. If now we consider more general
curves attached at the origin, it would be interesting to study the
existence and uniqueness/non-uniqueness of solutions with general
initial data, like the curves on Figure \ref{F4}.

\begin{figure}[ht]
\centering\epsfig{figure=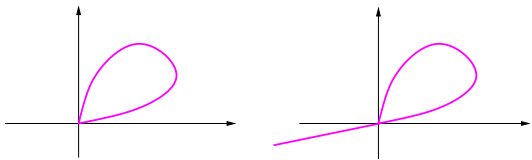,width=70mm}
\caption{Examples of non spiral initial data}\label{F4}
\end{figure}
}

\paragraph{Organization of the article.} In Section~\ref{prel}, we
recall the definition of viscosity solutions for the quasi-linear
evolution equation of interest in this paper.  The change of variables
that will be used in the proof of the comparison principle is also
introduced. In Section~\ref{sec:bounded}, we give the proof of
Theorem~\ref{thm comp spirales sous lin en r} {\color{black} in the case of bounded solutions}. The
proof in the general case is given in Section~\ref{sec:sublinear}. In
Section~\ref{sec:classical}, a classical solution is constructed under
an additional compatibility condition on the initial datum (see
Theorem~\ref{th:cauchy}). First, we construct a viscosity solution by
Perron's method (Subsection~\ref{subsec:perron}); second, we derive
gradient estimates (Subsection~\ref{subsec:gradestim}); third, we
explain how to prove that the viscosity solution is in fact a
classical one (Subsection~\ref{subsec:class}). The construction of the
solution without compatibility assumption (Theorem~\ref{th:cauchywc})
is made in Section~\ref{sec:withoutcompatibility}. Finally, proofs of
technical lemmas are gathered in Appendix~\ref{app:a}.

\paragraph{Notation.} If $a$ is a real number, $a_+$ denotes $\max(0,a)$ and $a_-$  denotes $\max(0,-a)$
If $p=(p_1,p_2) \in \R^2$, $p \neq 0$, then $\hat p$ denotes $p / |p|$
and $p^\perp$ denotes $(-p_2,p_1)$.

\section{Preliminaries}
\label{prel}

\subsection{Viscosity solutions for the main equation}

In view of \eqref{eq::4a}, it is convenient to introduce the
following notation
\begin{equation}\label{eq:barF}
\bar{F}(r,q,Y) 
= c \sqrt{1 + r^2q^2} + q \left(\frac{2+ r^2 q^2}{1+ r^2 q^2}\right)
+ \frac{r Y}{1 + r^2 q^2} . 
\end{equation}
We first recall the notion of viscosity solution for an
equation such as \eqref{eq::4a}. 
\begin{defi}[Viscosity solutions for \eqref{eq::4a},\eqref{eq::4b}] \label{defi::1} ~ \\
{\color{black} Let $T\in (0,+\infty]$.}
A lower semi-continuous (resp. upper semi-continuous) function {\color{black} $u:
 [0,T) \times (0,+\infty) \to \R$} is a (viscosity)
 \emph{super-solution} (resp. \emph{sub-solution}) of
 \eqref{eq::4a},\eqref{eq::4b} {\color{black} on $[0,T)\times (0,+\infty)$} if for any $C^2$ test
 function $\phi$ such that $u-\phi$ reaches a local minimum
 (resp. maximum) at {\color{black} $(t,r) \in [0,T) \times (0,+\infty)$}, we
 have
\begin{itemize}
\item[(i)] If {\color{black} $t> 0$}:
\begin{equation*}
r\phi_t \ge \bar F(r,\phi_r,\phi_{rr})\quad 
\bigg( \text{resp. } r\phi_t \le \bar F(r,\phi_r,\phi_{rr}) \bigg).
\end{equation*}
\item[(ii)] If $t=0$:
$$u(0,r)\ge \bar u_0(r)\quad \bigg({\rm resp.}\; u(0,r)\le \bar u_0(r)\bigg).$$
\end{itemize}
A continuous function {\color{black}  $u: [0,T) \times (0,+\infty) \to \R$} is a
(viscosity) \emph{solution} of \eqref{eq::4a},\eqref{eq::4b} {\color{black} on $[0,T)\times (0,+\infty)$} if it is
both a super-solution and a sub-solution.
\end{defi}
\begin{rem}
We do not impose any condition at $r=0$; in other words,
it is not necessary to impose a condition on the whole parabolic boundary
of the domain. This is due to the ``singularity'' of our equation at $r=0$.
\end{rem}
Since we only deal with this weak notion of solution,
(sub-/super-)solutions will always refer to (sub-/super-)solutions in
the viscosity sense.  \medskip

When constructing solutions by Perron's method, it is necessary to use
the following classical discontinuous stability result. The reader is
referred to \cite{cgg91} for a proof. 
\begin{pro}[Discontinuous stability] \label{pro:stability}
Consider a family $(u_\alpha)_{\alpha \in \mathcal{A}}$ of sub-solutions of
\eqref{eq::4a},\eqref{eq::4b} which is uniformly bounded from above. Then
the upper semi-continuous envelope of $\sup_{\alpha \in \mathcal{A}}
u_\alpha$ is a sub-solution of \eqref{eq::4a},\eqref{eq::4b}. 
\end{pro}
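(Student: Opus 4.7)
The plan is to follow the standard half-relaxed limits argument for discontinuous stability, adapted to the mixed parabolic-plus-initial-condition notion of sub-solution given in Definition~\ref{defi::1}. Write $u = \sup_{\alpha} u_\alpha$ and let $u^*$ denote its upper semi-continuous envelope on $[0,T)\times(0,+\infty)$; $u^*$ is finite-valued because of the uniform upper bound. Fix a $C^2$ test function $\phi$ and a point $(t_0,r_0)\in[0,T)\times(0,+\infty)$ at which $u^*-\phi$ attains a local maximum. By the usual trick of replacing $\phi$ with $\phi+|t-t_0|^2+|r-r_0|^2$ I may assume the maximum is strict on some closed neighborhood $\overline{Q}=[t_0-\eta,t_0+\eta]\cap[0,T)\times[r_0-\eta,r_0+\eta]$ with $r_0-\eta>0$, so the singularity of $\bar F$ at $r=0$ plays no role.

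First I would handle the case $t_0>0$. By definition of $u^*$ there exist sequences $(s_n,\rho_n)\to(t_0,r_0)$ and $\alpha_n\in\mathcal{A}$ with $u_{\alpha_n}(s_n,\rho_n)\to u^*(t_0,r_0)$. For each $n$ let $(t_n,r_n)\in\overline{Q}$ be a point where $u_{\alpha_n}-\phi$ attains its maximum over $\overline{Q}$; such a point exists because $u_{\alpha_n}$ is upper semi-continuous and $\overline{Q}$ is compact. A classical argument using the strictness of the maximum of $u^*-\phi$ and the inequality
\begin{equation*}
(u_{\alpha_n}-\phi)(t_n,r_n)\ge (u_{\alpha_n}-\phi)(s_n,\rho_n)\longrightarrow (u^*-\phi)(t_0,r_0)
\end{equation*}
forces $(t_n,r_n)\to(t_0,r_0)$ and $u_{\alpha_n}(t_n,r_n)\to u^*(t_0,r_0)$. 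For $n$ large, $(t_n,r_n)$ lies in the interior of $\overline{Q}$ and hence is an interior local maximum for $u_{\alpha_n}-\phi$; since $t_n>0$ as well, the sub-solution inequality gives $r_n\phi_t(t_n,r_n)\le \bar F(r_n,\phi_r(t_n,r_n),\phi_{rr}(t_n,r_n))$. Passing to the limit using continuity of $\phi$, $\phi_t$, $\phi_r$, $\phi_{rr}$ and of $\bar F$ on $\{r>0\}$ yields $r_0\phi_t(t_0,r_0)\le \bar F(r_0,\phi_r(t_0,r_0),\phi_{rr}(t_0,r_0))$, which is the desired sub-solution inequality.

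For the case $t_0=0$, I have to check that $u^*(0,r_0)\le \bar u_0(r_0)$. Pick the same sequences $(s_n,\rho_n)\to(0,r_0)$ and $\alpha_n$ with $u_{\alpha_n}(s_n,\rho_n)\to u^*(0,r_0)$, and localize the maximum of $u_{\alpha_n}-\phi$ on $\overline{Q}$ exactly as above, producing maximizers $(t_n,r_n)\to(0,r_0)$. Along this subsequence either $t_n>0$ for infinitely many $n$, in which case the parabolic inequality holds at $(t_n,r_n)$ and can be used together with the initial condition on $u_{\alpha_n}$ to conclude (alternatively, one works with a test function of the form $\phi(t,r)+Kt$ with $K$ large so that the parabolic inequality cannot hold near $t=0$), or $t_n=0$ for infinitely many $n$, in which case $u_{\alpha_n}(0,r_n)\le \bar u_0(r_n)$ by the initial sub-solution condition. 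Passing to the limit and using the continuity of the Lipschitz function $\bar u_0$ gives $u^*(0,r_0)\le \bar u_0(r_0)$.

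The only real obstacle is the standard one, namely promoting the \emph{approximate} maximum of $u^*-\phi$ at $(t_0,r_0)$ to genuine local maxima of $u_{\alpha_n}-\phi$ whose locations converge to $(t_0,r_0)$; the strict-maximum reduction combined with the uniform upper bound on the family $(u_\alpha)$ takes care of this. The singular coefficients in $\bar F$ are irrelevant because the test point is strictly inside $(0,+\infty)$, so $\bar F$ is smooth on a neighborhood of $(r_0,\phi_r(t_0,r_0),\phi_{rr}(t_0,r_0))$ and the limit passage in the nonlinearity is routine.
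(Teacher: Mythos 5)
The paper itself does not prove Proposition \ref{pro:stability}; it simply refers to \cite{cgg91}, so there is no internal argument to compare against. Your proof is the standard discontinuous-stability argument and is essentially correct: the uniform upper bound makes $u^*$ finite, upper semi-continuity of each $u_{\alpha_n}$ plus compactness of $\overline{Q}$ gives maximizers, the strict-maximum reduction forces $(t_n,r_n)\to(t_0,r_0)$ and $u_{\alpha_n}(t_n,r_n)\to u^*(t_0,r_0)$, and for $t_0>0$ the limit passage is routine because $\bar F$ is continuous on $\{r>0\}$, the singularity at $r=0$ being excluded by $r_0-\eta>0$. The one point you should tighten is the case $t_0=0$. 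Definition \ref{defi::1} imposes the strong condition $u(0,r)\le\bar u_0(r)$ at touching points, not the relaxed alternative ``either the PDE or the initial inequality'', so when the maximizers satisfy $t_n>0$ the limit of the viscosity inequalities only yields $r_0\phi_t(0,r_0)\le\bar F(r_0,\phi_r(0,r_0),\phi_{rr}(0,r_0))$, which says nothing about $u^*(0,r_0)$ versus $\bar u_0(r_0)$; your first suggestion, that this inequality ``can be used together with the initial condition on $u_{\alpha_n}$'', does not work as stated, since the initial condition of $u_{\alpha_n}$ is only available at $t=0$ and $u_{\alpha_n}$, being merely upper semi-continuous, has no modulus of continuity in time. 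The parenthetical alternative is the argument that actually closes this case: replace $\phi$ by $\phi+Kt$, which still touches $u^*$ from above at $(0,r_0)$, and choose $K$ so large that $r_0\bigl(\phi_t(0,r_0)+K\bigr)>\bar F\bigl(r_0,\phi_r(0,r_0),\phi_{rr}(0,r_0)\bigr)$ --- here $r_0>0$ is essential --- so that for $n$ large the viscosity inequality at interior points is impossible, hence $t_n=0$, and condition (ii) for $u_{\alpha_n}$ together with the continuity of $\bar u_0$ gives $u^*(0,r_0)\le\bar u_0(r_0)$. With that clarification your proof is complete and is exactly the argument the cited reference formalizes.
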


\subsection{A change of unknown function}
\label{subsec:change}

We will make use of the following change of unknown function:
$u(t,x)=\bar{u}(t,r)$ with $x=\ln r$ satisfies for all $t>0$ and $x \in \R$
\begin{equation}\label{eq:polar-spir}
u_t =  c e^{-x} \sqrt{1 + u_x^2} + e^{-2x}u_x
+e^{-2x} \frac{u_{xx}}{1 + u_x^2}
\end{equation}
submitted to the initial condition: for all $x \in \R$,
\begin{equation}\label{eq:ci}
u(0,x)=u_0(x) 
\end{equation}
where $u_0(x) = \bar{u}_0 (e^x)$. 
Eq.~\eqref{eq:polar-spir} can be rewritten $u_t = F(x,u_x,u_{xx})$
with
\begin{equation}\label{defi:F}
F(x,p,X) = c e^{-x} \sqrt{1 + p^2} + e^{-2x}p
+e^{-2x} \frac{X}{1 + p^2} .
\end{equation}
Remark that functions $F$ and $\bar{F}$ are related by the following formula
\begin{equation}\label{eq FF'}
F(x,u_x,u_{xx}) = \frac1r \bar{F} (r, \bar{u}_r, \bar{u}_{rr}) \, .
\end{equation}
Since the function $\ln$ is increasing and maps $(0,+\infty)$ onto
$\R$, we have the following elementary lemma which will be used
repeatedly throughout the paper. 
\begin{lem}[Change of variables]\label{lem:equiv}
 A function $\bar u$ is a solution of \eqref{eq::4a},\eqref{eq::4b}
 if and only if the corresponding function $u$ is a solution of
 \eqref{eq:polar-spir}-\eqref{eq:ci} with $u_0 (x) =
 \bar{u}_0(e^x)$.
\end{lem}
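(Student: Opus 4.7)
The plan is to treat this as a standard invariance-under-$C^\infty$-diffeomorphism result for viscosity solutions, exploiting the fact that the map $r \mapsto \ln r$ is a smooth bijection $(0,+\infty) \to \R$ with smooth inverse $x \mapsto e^x$. Since the change of variables is time-independent and purely in the spatial variable, the two formulations are rigorously equivalent, and the whole proof reduces to checking that test functions transform correctly.

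Concretely, I would proceed as follows. First, I would note the tautological bijection between continuous functions: given $\bar u : [0,T)\times(0,+\infty)\to \R$, set $u(t,x):=\bar u(t,e^x)$; conversely, given $u$, set $\bar u(t,r):=u(t,\ln r)$. Upper/lower semicontinuity is preserved since $\ln$ is a homeomorphism, and the initial condition is preserved since $u(0,x)=\bar u(0,e^x)=\bar u_0(e^x)=u_0(x)$, so (ii) of Definition~\ref{defi::1} is automatic. Second, I would show that test functions correspond: if $\phi\in C^2$ touches $u-\phi$ from below (say) at $(t_0,x_0)$, then $\bar\phi(t,r):=\phi(t,\ln r)$ is $C^2$ on $[0,T)\times(0,+\infty)$ (since $\ln$ is smooth on $(0,+\infty)$) and $\bar u - \bar\phi$ has a local min at $(t_0,r_0)$ with $r_0=e^{x_0}$; the same in reverse direction via $\phi(t,x)=\bar\phi(t,e^x)$.

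Third, I would carry out the chain-rule computation at the touching point. With $r=e^x$, one has
\[
\bar\phi_t = \phi_t, \qquad \bar\phi_r = \frac{1}{r}\phi_x, \qquad \bar\phi_{rr} = \frac{1}{r^2}(\phi_{xx}-\phi_x).
\]
Plugging these into $\bar F(r,\bar\phi_r,\bar\phi_{rr})$ and comparing with the definition \eqref{defi:F} of $F$, a direct computation (the three terms match up one-to-one once $r^2\bar\phi_r^2=\phi_x^2$ is used) yields exactly the identity \eqref{eq FF'} already recorded in the paper, i.e.\ $F(x,\phi_x,\phi_{xx}) = \tfrac{1}{r}\bar F(r,\bar\phi_r,\bar\phi_{rr})$. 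Since $r>0$, the sub-/super-solution inequality $r\bar\phi_t \gtreqless \bar F(r,\bar\phi_r,\bar\phi_{rr})$ is then equivalent to $\phi_t \gtreqless F(x,\phi_x,\phi_{xx})$, which is (i) of Definition~\ref{defi::1} for the transformed equation \eqref{eq:polar-spir}.

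There is essentially no hard step: the only thing to be a bit careful about is that the correspondence must work at the \emph{test function} level rather than just formally, which is precisely why I start from a $C^2$ test function for one equation and verify that its pullback is a valid $C^2$ test function for the other, achieving the same local extremum at the corresponding point. Since both directions of the correspondence $r\leftrightarrow x=\ln r$ are smooth, this is symmetric, and the equivalence stated in the lemma follows. The only ``obstacle'' is purely notational bookkeeping in the chain-rule step, and this has already been done for us once via formula \eqref{eq FF'}.
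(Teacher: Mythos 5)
Your proof is correct and is precisely the standard argument the paper intends: the paper gives no proof of Lemma~\ref{lem:equiv}, referring instead to \cite{cil92} for this routine invariance of viscosity solutions under the smooth diffeomorphism $x=\ln r$, and your test-function pullback together with the chain rule reproduces exactly the identity \eqref{eq FF'}, from which the equivalence of the inequalities follows since $r>0$. The only nitpick is that the three terms do not literally match one-to-one: the contribution $-\phi_x/r^2$ coming from $\bar\phi_{rr}=(\phi_{xx}-\phi_x)/r^2$ must be recombined with the first-order term of $\bar F$ to produce $e^{-2x}\phi_x$, but this is exactly the computation behind \eqref{eq FF'} and does not affect correctness.
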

The reader is referred to \cite{cil92} (for instance) for a proof of
such a result.
When proving the comparison principle in the general case, we will also have
to use Cartesian coordinates. From a technical point of view, the following lemma
is needed. 
\begin{lem}[Coming back to the Cartesian coordinates]\label{lem:cartesien}
 We consider a sub-solution $u$ (resp. super-solution $v$) of
 \eqref{eq:polar-spir}-\eqref{eq:ci} and we define the function $\tilde U$
 (resp. $\tilde V$) $: (0,+\infty) \times \R^2 \to \R$ by
$$\tilde 
U(t,X)=\theta(X)+u(t,x(X)) \quad \text{\rm (resp. }
\quad \tilde V(t,Y)=\theta(Y)+u(t,x(Y)) \text{)}  
$$
where $(\theta(Z),x(Z))$ is defined such that
$Z=e^{x(Z)+i\theta(Z)}\ne 0$. 
Then $\tilde U$ (resp. $\tilde V$) is sub-solution (resp. super-solution) of 
\begin{equation}\label{eq:1000}
\left\{\begin{array}{l}
\displaystyle{w_t = c |Dw| + \frac {Dw^\perp}{|D w|} D^2 w \frac {Dw^\perp}{|D w|}}\\
\\
w(0,x)=\theta(X)+\bar u_0(x(X)).
\end{array}\right.
\end{equation}
\end{lem}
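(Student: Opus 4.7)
My strategy is the standard viscosity test-function argument, with a perturbation/envelope trick to handle the extra angular variable that is present in $\tilde U$ but not in $u$.

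\medskip

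\emph{Smooth identity.} I would first verify the classical case: if $u\in C^2$ solves \eqref{eq:polar-spir}, then $\tilde U(t,X)=\theta(X)+u(t,\ln|X|)$ solves \eqref{eq:1000} pointwise. Introducing $V^*(t,r,\theta):=\theta+u(t,\ln r)$, one has the clean structure $V^*_\theta\equiv 1$, $V^*_{\theta\theta}=V^*_{r\theta}\equiv 0$, $V^*_r=u_x/r$, $V^*_{rr}=(u_{xx}-u_x)/r^2$, so in particular $|D\tilde U|^2=(1+u_x^2)/r^2$ never vanishes. A direct chain-rule computation (easiest after a rotation bringing $\theta_0$ to $0$) yields
\[
c|D\tilde U|+\frac{(D\tilde U)^\perp}{|D\tilde U|}\cdot D^2\tilde U\cdot\frac{(D\tilde U)^\perp}{|D\tilde U|}=ce^{-x}\sqrt{1+u_x^2}+e^{-2x}u_x+e^{-2x}\frac{u_{xx}}{1+u_x^2},
\]
which is exactly the right-hand side of \eqref{eq:polar-spir}. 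Since also $\tilde U_t=u_t$, the two PDEs agree pointwise in the smooth case.

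\medskip

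\emph{Setup and perturbation.} For the viscosity version, take a $C^2$ test function $\psi$ such that $\tilde U-\psi$ has a local maximum at $(t_0,X_0)$ with $X_0\ne 0$. In a smooth local branch of the angle near $X_0$, set $\Psi(t,r,\theta):=\psi(t,re^{i\theta})$; then $V^*-\Psi$ has a local maximum at $(t_0,r_0,\theta_0)$ with $X_0=r_0 e^{i\theta_0}$. The first-order condition in $\theta$ gives $\Psi_\theta(t_0,r_0,\theta_0)=1$, which in particular forces $D\psi(t_0,X_0)\ne 0$, so the level-set PDE is non-degenerate at the test point. The second-order condition only gives $\Psi_{\theta\theta}\ge 0$, which may be degenerate, so I would first replace $\psi$ by $\psi_\eps:=\psi+\eps|X-X_0|^2$: this still touches $\tilde U$ from above at $(t_0,X_0)$ and forces $(\Psi_\eps)_{\theta\theta}(t_0,r_0,\theta_0)\ge 2\eps r_0^2>0$. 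The conclusion for $\psi$ will follow from the one for $\psi_\eps$ by sending $\eps\to 0$.

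\medskip

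\emph{Angular envelope and key identity.} With $(\Psi_\eps)_{\theta\theta}>0$ the implicit function theorem provides a smooth $\theta^*(t,r)$ satisfying $(\Psi_\eps)_\theta(t,r,\theta^*(t,r))=1$, and I set
\[
\phi_\eps(t,x):=\Psi_\eps(t,e^x,\theta^*(t,e^x))-\theta^*(t,e^x)=\min_\theta\bigl[\Psi_\eps(t,e^x,\theta)-\theta\bigr]
\]
(locally). Because $V^*\le\Psi_\eps$ near $(t_0,r_0,\theta_0)$ and $V^*(t,r,\theta)-\theta=u(t,\ln r)$ is $\theta$-independent, one deduces $u(t,x)\le\phi_\eps(t,x)$ near $(t_0,x_0)$ with equality at $(t_0,x_0)$, so $\phi_\eps$ is an admissible upper test function for $u$. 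The envelope theorem then yields, at $(t_0,x_0)$,
\[
(\phi_\eps)_t=(\Psi_\eps)_t,\qquad (\phi_\eps)_x=r_0(\Psi_\eps)_r=u_x,\qquad (\phi_\eps)_{xx}=u_x+r_0^2\Bigl[(\Psi_\eps)_{rr}-\frac{(\Psi_\eps)_{r\theta}^2}{(\Psi_\eps)_{\theta\theta}}\Bigr].
\]
Substituting these into the sub-solution inequality $(\phi_\eps)_t\le F(x_0,(\phi_\eps)_x,(\phi_\eps)_{xx})$ for $u$ (with $F$ as in \eqref{defi:F}) and expressing the polar derivatives of $\psi_\eps$ back in Cartesian form, a direct algebraic calculation produces the identity
\[
c|D\psi_\eps|+\frac{(D\psi_\eps)^\perp}{|D\psi_\eps|}\cdot D^2\psi_\eps\cdot\frac{(D\psi_\eps)^\perp}{|D\psi_\eps|}-F\bigl(x_0,(\phi_\eps)_x,(\phi_\eps)_{xx}\bigr)=\frac{\bigl[(\Psi_\eps)_{r\theta}-u_x(\Psi_\eps)_{\theta\theta}/r_0\bigr]^2}{(1+u_x^2)(\Psi_\eps)_{\theta\theta}}\ge 0.
\]
Dropping the non-negative correction gives the level-set sub-solution inequality for $\psi_\eps$ at $(t_0,X_0)$, and sending $\eps\to 0$ finishes the sub-solution case. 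The super-solution case is strictly parallel: replace $\min$ by $\max$ in the envelope construction and perturb by $-\eps|X-X_0|^2$; the same identity, now with $(\Psi_\eps)_{\theta\theta}<0$, yields the reverse inequality. The main obstacle is uncovering this algebraic identity: the angular envelope construction is engineered precisely so that the angular-Hessian information lost by testing $u$ along a single ray resurfaces as a non-negative perfect square with the correct sign.
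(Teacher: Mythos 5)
Your proposal is correct, and it supplies an argument that the paper itself never writes out: Lemma \ref{lem:cartesien} is stated without proof (only Lemmas \ref{lem:psi} and \ref{lem:est} are proved in the appendix; the analogous Lemma \ref{lem:equiv} is dispatched with a reference to \cite{cil92}), so there is no ``paper route'' to compare against. Your mechanism is the right one: the only nontrivial point is that a test function $\psi$ touching $\tilde U$ at $(t_0,X_0)$ carries angular Hessian information ($\Psi_{r\theta}$, $\Psi_{\theta\theta}$) that a naive one-variable test function $x\mapsto \Psi(t,e^x,\theta_0)-\theta_0$ ignores, and your angular-envelope construction (after the $\pm\eps|X-X_0|^2$ perturbation making $(\Psi_\eps)_{\theta\theta}$ strictly signed) recovers it with the correct sign. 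I checked the key identity: with $p:=r_0(\Psi_\eps)_r$ and $(\Psi_\eps)_\theta=1$ at the touching point, the level-set operator applied to $\psi_\eps$ minus $F\bigl(x_0,(\phi_\eps)_x,(\phi_\eps)_{xx}\bigr)$ does equal $\bigl[(\Psi_\eps)_{r\theta}-p(\Psi_\eps)_{\theta\theta}/r_0\bigr]^2\big/\bigl[(1+p^2)(\Psi_\eps)_{\theta\theta}\bigr]$, which has the sign of $(\Psi_\eps)_{\theta\theta}$, so both the sub- and super-solution cases close as you claim; the constraint $\Psi_\theta=1$ also guarantees $D\psi(t_0,X_0)\neq0$, so the singular gradient case never arises, and the envelope $\phi_\eps$ is indeed $C^2$ by the implicit function theorem applied to $(\Psi_\eps)_\theta=1$. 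Two cosmetic points: writing $(\phi_\eps)_x=u_x$ is an abuse, since $u$ is only semi-continuous --- the symbol $u_x$ in your formulas must be read as the test-function quantity $r_0(\Psi_\eps)_r$ at the touching point (as your algebra in fact uses it); and you do not address the $t=0$ clause of Definition \ref{defi::1}, which is immediate from $u(0,x)\le u_0(x)$ (resp.\ $\ge$) and costs one line.
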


\begin{rem}
 In Lemma \ref{lem:cartesien}, for $Z\not=0$, the angle $\theta(Z)$
 is only defined modulo $2\pi$, but is locally uniquely defined by
 continuity. Then $D\theta, D^2\theta$ are always uniquely defined.
\end{rem}

\section{A comparison principle for bounded solutions}\label{sec:bounded}

As explained in the introduction, we first prove a comparison
principle for \eqref{eq::4a} in the class of bounded weak (viscosity)
solutions. In {\color{black} comparison} with classical comparison results for geometric
equations (see for instance \cite{es91,cgg91,sato94,gs91}), the
difficulty is to handle the singularity at the origin ($r=0$).

In order to clarify why  a comparison principle holds true
for such a singular equation, we consider the following generalized
case
\begin{equation}\label{eq:gen-r}
\bar{u}_t = \frac{\bar{b}(\bar{u}_r,r \bar{u}_r)}{r} 
+ \sigma^2 (r \bar{u}_r) \bar{u}_{rr}
\end{equation}
which can be written, with $x = \ln r$, 
\begin{equation}\label{eq:gen-x}
u_t = e^{-x} b(e^{-x} u_x,u_x) + e^{-2x} \sigma^2 (u_x) u_{xx}
\end{equation}
where $b(q,p)= \bar{b} (q,p) - \sigma^2 (p)q$.
\paragraph{Assumption on $\mathbf (b,\sigma)$.}
\begin{itemize}
\item $\sigma \in W^{1,\infty} (\R)$;
\item There exists $\delta_1, \delta_2, \delta _3, \delta_4>0$ such that 
\begin{itemize}
\item
for all $q \in \R$ and $p_1,p_2 \in \R$, 
$$
|b(q,p_1)- b(q,p_2)| \le \delta_1 |p_1 -p_2|;
$$
\item
for all $p \in \R$ and $q_1 \le q_2$, 
$$
\delta_2 (q_2 - q_1) \le b(q_2,p) - b(q_1,p);
$$
$$|b(q_1,p)-b(q_2,p)|\le \delta _3 |q_1-q_2|;
$$
\item for all $p\in \R$
$$|b(0,p)|\le \delta_4 \sqrt {1+|p|^2}$$
\item
{\color{black} we have $\|\sigma\|_\infty^2 < 2 \delta_2$.}
\end{itemize}
\end{itemize}
In our special case, $\sigma (p) = (1+p^2)^{-\frac12}$ and
$b(q,p) = c \sqrt{1+p^2} + q $, {\color{black} and the assumption on $(b,\sigma)$ is satisfied.}
\begin{theo}[Comparison principle for
 \eqref{eq:gen-r}-\eqref{eq:ci}] \label{thm comp spirales}
 Assume that $u_0:(0,+\infty)\to \R$ is Lipschitz continuous.  Consider a bounded
 sub-solution $u$ and a bounded super-solution $v$ of
 \eqref{eq:gen-r},\eqref{eq:ci} in the sense of Definition \ref{defi::1} with $\bar F$ 
given by the right hand side of (\ref{eq:gen-r}). 
Then $u \le v$ in $(0,+\infty) \times
 \R$.
\end{theo}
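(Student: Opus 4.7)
The plan is to argue by contradiction on an arbitrary time interval $[0, T) \times \R$ after the change of variables $x = \log r$ of Lemma~\ref{lem:equiv}, so that $u, v$ become bounded sub- and super-solutions of $u_t = F(x, u_x, u_{xx})$ with $F(x, p, X) = e^{-x} b(e^{-x} p, p) + e^{-2x} \sigma^2(p) X$ satisfying $u(0,\cdot) \le u_0 \le v(0,\cdot)$. Assuming $\sup(u - v) > 0$, I would introduce the doubled function
$$\Psi(t,s,x,y) = u(t,x) - v(s,y) - \frac{(x-y)^2}{2\eps} - \frac{(t-s)^2}{2\eps^2} - \frac{\eta}{T-t} - \alpha(x^2 + y^2),$$
with $\eta, \alpha > 0$ small enough that $\sup \Psi > 0$. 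The quadratic localization keeps the maximizer $(\bar t, \bar s, \bar x, \bar y)$ in a compact set (depending on $\alpha$), the $\eta/(T-t)$ penalty keeps $\bar t$ away from $T$, and the standard penalization lemma gives $(\bar x - \bar y)^2/\eps \to 0$ and $(\bar t - \bar s)^2/\eps^2 \to 0$ as $\eps \to 0$. The boundary cases $\bar t = 0$ or $\bar s = 0$ are ruled out by combining the Lipschitz continuity of $u_0$ with the initial inequalities.

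At the interior maximum, the Crandall-Ishii lemma (with small parameter $\mu > 0$) produces reals $X_\mu, Y_\mu$ together with the viscosity inequalities, which, once subtracted, yield with $p = (\bar x - \bar y)/\eps$,
$$\frac{\eta}{(T - \bar t)^2} \le F(\bar x, p + 2\alpha\bar x, X_\mu) - F(\bar y, p - 2\alpha\bar y, Y_\mu),$$
and the goal is to show the right-hand side tends to $0$. The non-standard step is to test the Ishii matrix inequality against the vector $(e^{-\bar x}, e^{-\bar y})$ (rather than the usual $(1, \pm 1)$), which gives
$$e^{-2\bar x} X_\mu - e^{-2\bar y} Y_\mu \le \frac{1}{\eps}(e^{-\bar x} - e^{-\bar y})^2 + o_\mu(1) + O(\alpha),$$
producing exactly the quantity that couples naturally with the coefficients of the equation.

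The crucial algebraic step is the decomposition $b(q, p) = \delta_2 q + \tilde b(q, p)$, which isolates the coercive linear-in-$q$ part: $\tilde b$ is monotone nondecreasing in $q$ with slope at most $\delta_3 - \delta_2$ and satisfies $|\tilde b(0, p)| = |b(0,p)| \le \delta_4 \sqrt{1+p^2}$. This yields
$$e^{-\bar x} b(e^{-\bar x} p, p) - e^{-\bar y} b(e^{-\bar y} p, p) = \delta_2(e^{-2\bar x} - e^{-2\bar y}) p + A',$$
where $|A'| \le \delta_4 |e^{-\bar x} - e^{-\bar y}|\sqrt{1+p^2} + (\delta_3 - \delta_2)|e^{-2\bar x} - e^{-2\bar y}|\,|p|$. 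The main term is non-positive because $e^{-2\bar x} - e^{-2\bar y}$ and $p = (\bar x - \bar y)/\eps$ always have opposite signs, and has magnitude $\delta_2 |p|(e^{-2\bar x} - e^{-2\bar y})$. The elementary identity
$$(e^{-\bar x} - e^{-\bar y})^2 = \tanh(|\bar x - \bar y|/2)\,(e^{-2\bar x} - e^{-2\bar y}) \le \frac{|\bar x - \bar y|}{2}(e^{-2\bar x} - e^{-2\bar y})$$
(obtained by writing $e^{-\bar x}, e^{-\bar y} = A e^{\pm |\bar x - \bar y|/2}$) transforms the Ishii bound into
$$\sigma^2(p)\bigl(e^{-2\bar x} X_\mu - e^{-2\bar y} Y_\mu\bigr) \le \frac{\|\sigma\|_\infty^2}{2}\,|p|\,(e^{-2\bar x} - e^{-2\bar y}) + o_\mu(1) + O(\alpha),$$
which is absorbed by the main negative contribution precisely because $\|\sigma\|_\infty^2 < 2\delta_2$. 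The residual $A'$ together with the $\alpha$-corrections in the gradient and the $\mu$-error vanish in the iterated limit $\mu \to 0$, $\eps \to 0$ (using the boundedness of $\bar x, \bar y$ ensured by $\alpha$), giving $\eta/T^2 \le 0$, the desired contradiction.

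The main obstacle is the singular behavior of the coefficients $e^{-x}$ and $e^{-2x}$ as $x \to -\infty$: the first-order difference $e^{-\bar x}b(e^{-\bar x}p,p) - e^{-\bar y}b(e^{-\bar y}p,p)$ cannot be controlled by naive Lipschitz estimates. The decomposition $b = \delta_2 q + \tilde b$ isolates the ``main'' linear part, which upon doubling produces a negative coercive contribution that, paired with the clever Ishii test vector $(e^{-\bar x}, e^{-\bar y})$, exactly absorbs the second-order contribution under the sharp structural condition $\|\sigma\|_\infty^2 < 2\delta_2$.
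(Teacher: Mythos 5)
Your core absorption mechanism is essentially the paper's: after $x=\ln r$, the coercivity of $b$ in $q$ produces a negative first-order contribution of size $2\delta_2 e^{-2x}(x-y)^2/\eps$ which dominates the second-order term obtained by testing the Ishii matrix inequality with a vector built from $e^{-x},e^{-y}$ (and the $\sigma$-factors), and this is exactly where $\|\sigma\|_\infty^2<2\delta_2$ enters; your $\tanh$ identity is a clean substitute for the paper's Taylor expansion in Lemma~\ref{lem:estim1}. Two slips in this part are fixable but real: first, the $\sigma$-factors must be put \emph{inside} the test vectors (as the paper does, $\xi_1=e^{-x}\sigma(p+\alpha x)$, $\xi_2=e^{-y}\sigma(p)$); multiplying the $(e^{-\bar x},e^{-\bar y})$-tested inequality by $\sigma^2(p)$ leaves an error $(\sigma^2(p+2\alpha\bar x)-\sigma^2(p))e^{-2\bar x}X_\mu$ which is not $o_\mu(1)$, since $X_\mu$ is only controlled from below at scale $1/\mu$. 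Second, your bound on $A'$ keeps a residual $(\delta_3-\delta_2)|e^{-2\bar x}-e^{-2\bar y}|\,|p|$ of exactly the same order as the main negative term, and no upper bound on $\delta_3$ is assumed; you must use the monotonicity of $\tilde b$ (the sign of $(e^{-\bar x}-e^{-\bar y})$ against that of $q=e^{-\bar x}p$), not only its Lipschitz constant, to reduce $A'$ to the $\delta_4$-term alone.

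The genuine gap is the treatment of the localization parameter $\alpha$, i.e.\ precisely the singular regime where the maximizer escapes to $x\to-\infty$ ($r\to0$), which is the whole difficulty of the theorem. The ``boundedness of $\bar x,\bar y$ ensured by $\alpha$'' is only $|\bar x|\le C/\sqrt\alpha$, so the $\alpha$-corrections are multiplied by coefficients as large as $e^{-2\bar x}\le e^{2C/\sqrt\alpha}$: the gradient perturbation produces terms of size $\delta_3 e^{-2\bar x}\alpha|\bar x|+\delta_1 e^{-\bar x}\alpha|\bar x|$, the localization Hessian gives $\|\sigma\|_\infty^2 e^{-2\bar x}\alpha$, and the corrected test vectors give a cross term of order $e^{-2\bar x}(\alpha|\bar x|)^2/\eps$. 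None of these is $o(1)$ in either iterated limit: with $\alpha$ fixed and $\eps\to0$ the cross term behaves like $\alpha e^{2C/\sqrt\alpha}/\eps$ and blows up (so the contradiction $\eta/T^2\le 0$ is not reached), while with $\eps$ fixed and $\alpha\to0$ the exponential factors grow faster than $\alpha|\bar x|\le C\sqrt\alpha$ decays. The paper resolves this with an extra structural argument that your proposal lacks (Lemma~\ref{lem:estim2} and the final case distinction): since the correction $\alpha x$ is negative where it matters, monotonicity of $b$ in $q$ converts it into a dominant \emph{negative} term $-\delta_2\,\alpha x_-\,e^{-2x}$ which, for $x\le-a$ and under the constraint $e^{Kt}\sqrt\alpha/\eps\le1$, absorbs all the other $\alpha$-errors, whereas for $x\ge-a$ every $\alpha$-error is $O(\sqrt\alpha)$; moreover, in this ordering of limits the quantity $e^{-2x}(x-y)^2/\eps$ need not vanish, which is why the paper weights the penalization by $e^{Kt}$ and chooses $K$ large to absorb the Young remainder of the $\delta_4$-term. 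Without an argument of this type your proof does not close.
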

{\color{black} \begin{rem}
For radial solutions of the heat equation $u_t = \Delta u$ on $\R^n\backslash \left\{0\right\}$, we get $b(q,p)=(n-1)q$ and $\sigma(p)=1$. Therefore
the assumption on $(b,\sigma)$ is satisfied if and only if $1<2(n-2)$. 
Notice that in particular for $n=2$ this assumption is not satisfied.
\end{rem}}
\begin{proof}[Proof of Theorem \ref{thm comp spirales}]
We classically fix $T>0$ and argue by contradiction by assuming that
$$
M = \sup_{0<t<T,x \in \R} (u(t,x) - v(t,x)) >0.
$$ 
\begin{lem}[Penalization] \label{lem:penal}
For $\alpha, \eps, \eta>0$ small enough, and any {\color{black} $K\ge 0$,}
the supremum 
$$
M_{\eps, \alpha} = \sup_{0< t <T,x,y \in \R} \left\{ u(t,x) - v(t,y) -
 e^{Kt} \frac{(x-y)^2}{2 \eps} - \frac{\eta}{T-t} - \alpha
 \frac{x^2}2   \right\}
$$
is attained at $(t,x,y)$ with $t>0$,  $M_{\eps,\alpha} > M/3>0$,
$|x-y| \le C_0 \sqrt{\eps}$ and $\alpha |x| \le C_0 \sqrt{\alpha}$
for some $C_0>0$ only depending on $\|u\|_\infty$ and $\|v\|_\infty$. 
\end{lem}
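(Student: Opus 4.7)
This is the standard penalization step in the doubling-of-variables technique for viscosity solutions. Set
$$\Phi(t,x,y) := u(t,x) - v(t,y) - e^{Kt}\tfrac{(x-y)^2}{2\eps} - \tfrac{\eta}{T-t} - \alpha\tfrac{x^2}{2},$$
so that $M_{\eps,\alpha} = \sup_{(0,T)\times\R^2}\Phi$. The functional $\Phi$ is upper semicontinuous (since $u$ is USC and $-v$ is USC by Definition \ref{defi::1}) and coercive at $t=T$ (by the $\eta/(T-t)$ barrier), at $|x|=\infty$ (by $\alpha x^2$), and at $|y|=\infty$ for bounded $x$ (by $(x-y)^2/\eps$). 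The only boundary behavior still to control is $t\to 0^+$, which I will rule out using the initial condition.

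I would first pick a near-optimal point for the unpenalized problem: using $M=\sup(u-v)>0$, choose $(t_0,x_0)\in(0,T)\times\R$ with $u(t_0,x_0)-v(t_0,x_0)>\tfrac{5M}{6}$. Testing $\Phi$ at $(t_0,x_0,x_0)$ gives
$$M_{\eps,\alpha}\ge \tfrac{5M}{6}-\tfrac{\eta}{T-t_0}-\alpha\tfrac{x_0^2}{2},$$
so for $\eta,\alpha$ small enough (with $t_0,x_0$ fixed first) one has $M_{\eps,\alpha}>M/3$. Next I would exclude $t=0$: the sub/super-solution conditions at the initial time give $u(0,x)\le u_0(x)$ and $v(0,y)\ge u_0(y)$, so with $L$ the global Lipschitz constant of $u_0$,
$$u(0,x)-v(0,y)\le u_0(x)-u_0(y)\le L|x-y|.$$
The elementary bound $L|x-y|-\tfrac{(x-y)^2}{2\eps}\le \tfrac{L^2\eps}{2}$ then yields $\sup_{x,y}\Phi(0,x,y)\le \tfrac{L^2\eps}{2}$, which is $<M/3$ for $\eps$ sufficiently small. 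Since $u$ is USC and the remaining terms are continuous in $t$, one has $\limsup\Phi(t_n,x_n,y_n)\le \sup_{x,y}\Phi(0,x_\infty,y_\infty)<M/3<M_{\eps,\alpha}$ for any sequence with $t_n\to 0^+$, so no maximizing sequence can approach $t=0$. Combined with coercivity, any maximizing sequence then stays in a compact subset of $(0,T)\times\R^2$, and the USC functional $\Phi$ attains its supremum at some $(t,x,y)$ with $t>0$.

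For the two smallness estimates, $\Phi(t,x,y)=M_{\eps,\alpha}>0$ at the maximizer, combined with $e^{Kt}\ge 1$, gives
$$\tfrac{(x-y)^2}{2\eps}+\alpha\tfrac{x^2}{2}\le u(t,x)-v(t,y)\le \norme{u}_\infty+\norme{v}_\infty;$$
dropping the appropriate nonnegative term in turn yields $|x-y|\le C_0\sqrt{\eps}$ and $\alpha|x|\le C_0\sqrt{\alpha}$ with $C_0=\sqrt{2(\norme{u}_\infty+\norme{v}_\infty)}$. The main (and only) point needing care is the order of parameter choice: I would fix $\eps$ small enough so that $L^2\eps/2<M/3$, then choose $\eta,\alpha$ small depending on $\eps$ (and on the near-optimal point $(t_0,x_0)$) so that the lower bound $M_{\eps,\alpha}>M/3$ holds. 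No step presents a genuine obstacle; the lemma is essentially a bookkeeping exercise once the penalization is written down, and its role is to set the stage for the (much harder) viscosity doubling argument to come.
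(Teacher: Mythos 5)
Your proof is correct and follows essentially the same route as the paper: a near-optimal test point gives $M_{\eps,\alpha}>M/3$ for small $\eta,\alpha$, boundedness of $u,v$ gives the $\sqrt\eps$ and $\sqrt\alpha$ estimates, and the Lipschitz continuity of $u_0$ together with the initial-condition part of Definition \ref{defi::1} rules out $t=0$ for $\eps$ small. The only differences are cosmetic: you justify attainment via a limsup/compactness argument and exclude $t=0$ by completing the square ($L|x-y|-(x-y)^2/2\eps\le L^2\eps/2$), whereas the paper attains the maximum directly from boundedness and uses the bound $|x-y|\le C_0\sqrt{\eps}$ at a hypothetical $t=0$ maximizer.
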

\begin{proof}[Proof of Lemma~\ref{lem:penal}]
The fact that $M>0$ means that there exist $t^*>0$ and $x^* \in \R$ such that
$$
u(t^*,x^*) -  v(t^*,x^*) \ge M/2> 0 .
$$  
Since $u$ and $v$ are bounded functions, $M_{\eps,\alpha}$ is attained at
a point $(t,x,y)$. By optimality of $(t,x,y)$, we have in particular
\begin{multline*}
u(t,x) - v(t,y) - e^{Kt} \frac{(x-y)^2}{2 \eps}  - \frac{\eta}{T-t} 
- \alpha \frac{x^2}2 \\ 
\ge u(t^*,x^*) - v(t^*,x^*) - \frac{\eta}{T-t^*} - \alpha \frac{(x^*)^2}2 
\ge M/3
\end{multline*}
for $\alpha$ and $\eta$ small enough (only depending on $M$).  In
particular,
$$
\frac{(x-y)^2}{2 \eps} + \alpha \frac{x^2}2 \le \|u\|_\infty + \|v\|_\infty \, .
$$
Hence, there exists a constant $C_0$ only depending on $\|u\|_\infty$ and $\|v\|_\infty$ such that
\begin{equation}\label{estim:penal}
|x-y| \le C_0 \sqrt{\eps} \quad \text{ and } \quad \alpha |x| \le C_0 \sqrt{\alpha} \, .
\end{equation}
Assume  that $t=0$. In this case, we use the fact
that $u_0$ is Lipschitz continuous and \eqref{estim:penal} in order
to get
\begin{eqnarray*}
\frac{M}3 &\le& u_0 (x) - u_0 (y)\le \|D u_0\|_\infty |x-y|\le C_0 \|D
u_0\|_\infty\sqrt\e
\end{eqnarray*}
which is absurd for $\e$ small enough (depending only on $M, C_0$ and $ \|D u_0\|_\infty$).
Hence $t>0$ and the proof of the lemma is complete.
\end{proof}
In the remaining of the proof, $\eps$ is fixed (even if we will choose
it small enough) and $\alpha$ goes to $0$ (even if it is not necessary
to pass to the limit).  In view of the previous discussion, we can
assume that, for $\eps$ small enough, we have $t>0$ for all $\alpha>0$
small enough (independent on $\eps$). We thus can write two viscosity
inequalities. It is then classical to use Jensen-Ishii's Lemma and combine
viscosity inequalities in order to get the following result.
\begin{lem}[{\color{black}Consequence of viscosity inequalities}] \label{lem:ji}
\begin{multline} \label{eq:ji}
\frac{\eta}{(T-t)^2} + K e^{Kt} \frac{(x-y)^2}{2\eps}  \le 
e^{-x} b(e^{-x} (p+\alpha x), p+\alpha x) -  e^{-y} b (e^{-y} p, p) \\
+  e^{-2x} \|\sigma\|_\infty^2 \alpha + \frac{e^{Kt}}{\eps}
(e^{-x} \sigma (p+\alpha x) - e^{-y} \sigma (p))^2 
\end{multline}
where $p =   e^{Kt}\frac{(x-y)}\eps$. 
\end{lem}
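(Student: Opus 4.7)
The plan is to apply the Jensen--Ishii lemma (see \cite{cil92}) to the function $(s,\xi,\zeta) \mapsto u(s,\xi) - v(s,\zeta) - \phi(s,\xi,\zeta)$ at its maximum point $(t,x,y)$ furnished by Lemma~\ref{lem:penal}, with test function
$$\phi(s,\xi,\zeta) = e^{Ks}\frac{(\xi-\zeta)^2}{2\eps} + \frac{\eta}{T-s} + \alpha \frac{\xi^2}{2}.$$
Since $t>0$, the lemma produces $a_1,a_2\in\R$ satisfying $a_1 - a_2 = \phi_t(t,x,y) = \frac{\eta}{(T-t)^2} + K e^{Kt}\frac{(x-y)^2}{2\eps}$, together with scalars $X,Y\in\R$ (the spatial problem is one-dimensional) with
$$\begin{pmatrix} X & 0 \\ 0 & -Y \end{pmatrix} \le \frac{e^{Kt}}{\eps}\begin{pmatrix} 1 & -1 \\ -1 & 1 \end{pmatrix} + \alpha\begin{pmatrix} 1 & 0 \\ 0 & 0 \end{pmatrix}$$
up to the standard auxiliary perturbation that can be sent to $0$ at the end. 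The viscosity sub-solution inequality at $(t,x)$ and super-solution inequality at $(t,y)$ then read, with $p = e^{Kt}(x-y)/\eps$,
$$a_1 \le e^{-x}\,b\bigl(e^{-x}(p+\alpha x),\, p+\alpha x\bigr) + e^{-2x}\sigma^2(p+\alpha x)\, X,$$
$$a_2 \ge e^{-y}\,b(e^{-y}p,\, p) + e^{-2y}\sigma^2(p)\, Y.$$

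To handle the second-order contribution, the key idea is to test the matrix inequality above against the vector $(e^{-x}\sigma(p+\alpha x),\, e^{-y}\sigma(p))^T$, which is tailored to reconstruct the diffusion coefficient $e^{-2x}\sigma^2$ of the equation. This yields
$$e^{-2x}\sigma^2(p+\alpha x)\, X - e^{-2y}\sigma^2(p)\, Y \le \frac{e^{Kt}}{\eps}\bigl(e^{-x}\sigma(p+\alpha x) - e^{-y}\sigma(p)\bigr)^2 + \alpha\, e^{-2x}\sigma^2(p+\alpha x),$$
and the bound $\sigma^2(p+\alpha x) \le \|\sigma\|_\infty^2$ converts the last term into $\alpha e^{-2x}\|\sigma\|_\infty^2$.

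Subtracting the super-solution inequality from the sub-solution one, replacing $a_1 - a_2$ by $\phi_t$, and inserting the above estimate on the second-order part gives exactly \eqref{eq:ji}. I do not anticipate any serious obstacle here: the argument is a standard manipulation of Jensen--Ishii, and the single non-routine observation is the choice of the test vector, whose components $e^{-x}\sigma(p+\alpha x)$ and $e^{-y}\sigma(p)$ mirror the square-root structure of the diffusion coefficient $e^{-2x}\sigma^2(u_x)$ and are precisely what is needed to produce the cross-difference $e^{-x}\sigma(\cdot) - e^{-y}\sigma(\cdot)$ appearing on the right-hand side of \eqref{eq:ji}.
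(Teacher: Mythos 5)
Your proposal is correct and follows essentially the same route as the paper: apply the parabolic Jensen--Ishii lemma at the maximum point, write the sub- and super-solution inequalities with gradients $p+\alpha x$ and $p$, test the block matrix inequality against the vector $\bigl(e^{-x}\sigma(p+\alpha x),\, e^{-y}\sigma(p)\bigr)$, and send the auxiliary perturbation parameter to zero at the end. The only cosmetic difference is that the paper extracts the $\alpha$-contribution of the penalty from the Hessian (writing $A+\alpha$ in the sub-solution inequality, with the matrix inequality involving only the doubling term), whereas you keep it inside the matrix inequality and recover the term $\alpha e^{-2x}\|\sigma\|_\infty^2$ when testing; the two bookkeepings are equivalent.
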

\begin{proof}[Proof of Lemma~\ref{lem:ji}]
Jensen-Ishii 's Lemma \cite{cil92} implies that for
all $\gamma_1 >0$, there exist four real numbers $a,b,A,B$ such that
\begin{eqnarray}
 \label{eq:visc1}  a &\le&  e^{-x} b(e^{-x} (p+\alpha x), p+\alpha x) 
 + e^{-2x} \sigma^2 (p+\alpha x) (A+\alpha) \\
 \label{eq:visc2}
 b &\ge&  e^{-y} b (e^{-y} p, p) + e^{-2y} \sigma^2 (p) B
\end{eqnarray}
Moreover $a,b$ satisfy the following inequality
\begin{equation}
\label{eq:timederivative}
a-b \ge  \frac{\eta}{(T-t)^2} + K e^{Kt} \frac{(x-y)^2}{2\eps}  
\end{equation}
(it is in fact an equality)
and for any $\gamma_1>0$ small enough, there exist two real numbers
$A,B$ satisfying the following matrix inequality
$$
\left[\begin{array}{cc} A & 0 \\ 0 & -B \end{array}
\right] \le \frac{e^{Kt}}{\eps} (1+{\gamma_1}) \left[\begin{array}{cc} 1 &
   -1 \\ -1 & 1 \end{array} \right] \, .
$$
This matrix inequality implies
\begin{equation}\label{matrix ineq}
 A \xi_1^2 \le B \xi_2^2 + \frac{e^{Kt}}{\eps}(1+{\gamma_1}) (\xi_1 -\xi_2)^2 
\end{equation}
for all $\xi_1,\xi_2 \in \R$. Use this inequality with $\xi_1 = e^{-x} \sigma (p+\alpha x)$
and $\xi_2 = e^{-y} \sigma (p)$ and let $\gamma_1 \to 0$ in order to get the desired inequality.
\end{proof}
We can next make appear which error terms depend on $\alpha$ and 
which ones depend on $\eps$. For all $\gamma_2>0$, 
\begin{equation} \label{eq:1}
\frac{\eta}{T^2} + K e^{Kt}  \frac{(x-y)^2}{2\eps} \le T_\alpha + T_\eps
\end{equation}
where 
\begin{multline*}
T_\alpha = e^{-x} b(e^{-x} (p+\alpha x), p+\alpha x) -  e^{-x} b (e^{-x} p, p) \\
+  \|\sigma\|_\infty^2 e^{-2x}  \alpha +  (1 + \gamma_2^{-1}) 
\frac{e^{Kt-2x}}{\eps} \|\sigma'\|_\infty^2 (\alpha x)^2
\end{multline*} 
and $T_\eps = T_\eps^1 + T_\eps^2$ with
$$
T_\eps^1 =  e^{-x} b(e^{-x} p, p )- e^{-y} b(e^{-y}p,p) \quad \text{and} \quad
T_\eps^2= (1+\gamma_2)\|\sigma\|^2_\infty \frac{e^{Kt-2x}}{\eps} 
(1 - e^{x-y})^2 .
$$

It remains to estimate $T_\eps$.
\begin{lem}[Estimate for $T_\eps$]\label{lem:estim1}
For all $\gamma_3 >0$, 
\begin{equation} \label{eq:3} T_\eps \le 
  ((C - 2 \delta_2)e^{-2x} + \gamma_3 ) e^{Kt}
 \frac{(x-y)^2}\eps + \gamma_3 \eps
\end{equation}
where {\color{black} $C = \frac{\delta_4^2}{4\gamma_3}  +  (1+\gamma_2)
\|\sigma\|^2_\infty + o_\eps (1)$}.
\end{lem}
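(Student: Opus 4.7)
The plan is to treat $T_\eps^1$ and $T_\eps^2$ separately, exploiting the fact that $|x-y| \le C_0\sqrt{\eps}$ makes $e^{x-y}$ close to $1$, so Taylor expansions of $e^{x-y}-1$, $1+e^{x-y}$, and $1-e^{x-y}$ in powers of $x-y$ are available with remainder of type $o_\eps(1)$. For $T_\eps^2$, this is immediate: writing $(1-e^{x-y})^2 = (x-y)^2 (1+o_\eps(1))$ yields
$$T_\eps^2 = \bigl[(1+\gamma_2)\|\sigma\|_\infty^2 + o_\eps(1)\bigr] e^{-2x} e^{Kt} \frac{(x-y)^2}{\eps},$$
which contributes the $(1+\gamma_2)\|\sigma\|_\infty^2$ term to $C$.

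For $T_\eps^1$, I would decompose using the anchor value $b(0,p)$ and the intermediate values $q_x = e^{-x}p$, $q_y = e^{-y}p$:
$$T_\eps^1 = e^{-x}\bigl[b(q_x,p)-b(q_y,p)\bigr] + (e^{-x}-e^{-y})\bigl[b(q_y,p)-b(0,p)\bigr] + (e^{-x}-e^{-y}) b(0,p).$$
The first two pieces should be handled by the lower bound $\delta_2(q_2-q_1) \le b(q_2,p) - b(q_1,p)$. The key sign observation is that $q_y - q_x = (e^{-y}-e^{-x})\,p$ is always nonnegative, because $e^{-y}-e^{-x}$ and $p = e^{Kt}(x-y)/\eps$ share the sign of $x-y$; symmetrically $(e^{-x}-e^{-y}) q_y \le 0$ in all cases. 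Combining, the monotonicity assumption gives
$$e^{-x}\bigl[b(q_x,p)-b(q_y,p)\bigr] + (e^{-x}-e^{-y})\bigl[b(q_y,p)-b(0,p)\bigr] \le -\delta_2 (e^{-y}-e^{-x})(e^{-x}+e^{-y})\,p,$$
and substituting $p = e^{Kt}(x-y)/\eps$ together with the Taylor expansions $(e^{x-y}-1) = (x-y)(1+o_\eps(1))$ and $1+e^{x-y} = 2(1+o_\eps(1))$ turns the right-hand side into $-2\delta_2 e^{-2x}e^{Kt}(x-y)^2/\eps$ up to a factor $(1+o_\eps(1))$. This produces the leading $-2\delta_2 e^{-2x}$ contribution in the lemma.

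The third piece is controlled through the growth assumption $|b(0,p)| \le \delta_4\sqrt{1+p^2}$ combined with $|e^{-x}-e^{-y}| = e^{-x}|x-y|(1+o_\eps(1))$. The natural way to produce the $\delta_4^2/(4\gamma_3)$ constant is to apply Young's inequality in the form $UV \le \frac{e^{Kt}}{4\gamma_3\eps} U^2 + \frac{\gamma_3\eps}{e^{Kt}} V^2$ with $U = \delta_4 e^{-x}|x-y|$ and $V = \sqrt{1+p^2}$; expanding $V^2 = 1 + e^{2Kt}(x-y)^2/\eps^2$ and using $e^{Kt}\ge 1$ yields
$$(e^{-x}-e^{-y})\, b(0,p) \le (1+o_\eps(1))\left[\frac{\delta_4^2}{4\gamma_3} e^{-2x} e^{Kt}\frac{(x-y)^2}{\eps} + \gamma_3 e^{Kt}\frac{(x-y)^2}{\eps} + \gamma_3\eps\right],$$
which supplies the $\delta_4^2/(4\gamma_3)$ summand in $C$ as well as the residual $\gamma_3$-terms in the statement. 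Summing the three contributions gives the desired bound with $C = \delta_4^2/(4\gamma_3) + (1+\gamma_2)\|\sigma\|_\infty^2 + o_\eps(1)$.

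The main obstacle is the bookkeeping of all the $o_\eps(1)$ errors and the careful choice of the Young weight, which must match $\gamma_3$ exactly so that the coefficient of $e^{-2x} e^{Kt}(x-y)^2/\eps$ is precisely $\delta_4^2/(4\gamma_3)$. A secondary subtlety is that one must verify the sign facts $q_y \ge q_x$ and $(e^{-x}-e^{-y}) q_y \le 0$ uniformly in the cases $x>y$, $x<y$, and $x=y$; it is exactly these sign identities, which follow from the specific form $p = e^{Kt}(x-y)/\eps$ provided by Jensen--Ishii's lemma, that let the monotonicity lower bound $\delta_2$ be applied \emph{as an upper bound} in both decomposed pieces.
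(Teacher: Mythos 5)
Your proof is correct, and for the key term $T_\eps^1$ it takes a genuinely different (though closely related) route from the paper. The paper applies the mean value theorem to $z\mapsto e^{-z}b(e^{-z}p,p)$ at an intermediate point $y_\theta$ between $x$ and $y$, extracting one factor $\delta_2$ from $\partial_q b\ge \delta_2$ and a second one from the inequality $q\,(b(q,p)-b(0,p))\ge \delta_2 q^2$ applied to the zeroth-order term, then treats the $b(0,p)$ remainder exactly as you do ($\delta_4$-growth plus Young with weight $\gamma_3$). You instead split $T_\eps^1$ algebraically into an increment in the $q$-slot, an increment of the prefactor against $b(q_y,p)-b(0,p)$, and the $b(0,p)$ remainder, and you apply the monotonicity hypothesis twice as a finite-difference inequality, the sign facts $(e^{-y}-e^{-x})p\ge 0$ and $(e^{-x}-e^{-y})q_y\le 0$ playing the role of the paper's observation that $p(x-y)\ge 0$. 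Both routes produce the same total $-2\delta_2 e^{-2x}(1+o_\eps(1))$ coefficient and the same constant $\frac{\delta_4^2}{4\gamma_3}+(1+\gamma_2)\|\sigma\|_\infty^2+o_\eps(1)$; your version has the small advantage of never differentiating $b$ in $q$, so it uses the hypotheses exactly as stated (the paper's Taylor step tacitly assumes $q\mapsto b(q,p)$ differentiable), at the price of a slightly longer case/sign discussion, while the paper's mean-value computation is more compact. One cosmetic point: in your final Young step the factor $(1+o_\eps(1))$ should sit only on $U=\delta_4 e^{-x}|x-y|(1+o_\eps(1))$ (hence only on the $\frac{\delta_4^2}{4\gamma_3}$ term after squaring), not on the whole bracket, so that the $\gamma_3\,e^{Kt}\frac{(x-y)^2}{\eps}$ and $\gamma_3\eps$ terms come out with clean coefficients as in the statement; as written, a spurious $(1+o_\eps(1))\gamma_3$ would force a slightly different choice of $K$ later, though the fix is immediate.
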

\begin{proof}
 Through a Taylor expansion, we obtain
\begin{align*}
T_\eps^1 = & -e^{-y_\theta} b( e^{-y_\theta}p,p)(x-y) - e^{-2y_\theta} \partial_q b (e^{-y_\theta}p,p)p(x-y)  \\
\le & - \eps e^{-Kt} b(e^{-y_\theta}p,p) e^{-y_\theta} p - \delta_2  e^{Kt-2y_\theta}\frac{(x-y)^2}{\eps} 
\end{align*}
where $y_\theta = \theta y + (1-\theta) x$ for some $\theta \in [0,1]$,
{\color{black}  and we have used the fact that for all $q\in \R$,  
$$q (b(q,p)-b(0,p)) \ge \delta_2 q^2$$
We} get for any $\gamma_3 >0$
\begin{align*}
T_\eps^1 \le &   {\color{black} \delta_4} e^{-y_\theta} \frac{|x-y|}{\sqrt{\eps}} \sqrt{\eps}\sqrt{1+p^2} 
- 2\delta_2 e^{Kt-2y_\theta} \frac{(x-y)^2}\eps   \\
\le & \frac{\delta_4^2}{4\gamma_3} (1+o_\eps (1))e^{Kt-2x} \frac{(x-y)^2}\eps +  e^{-Kt} \gamma_3 \eps +  \gamma_3
e^{Kt} \frac{(x-y)^2}\eps \\
& - 2\delta_2  (1+o_\eps (1))e^{Kt-2x} \frac{(x-y)^2}\eps,
\end{align*}
{\color{black}where we have used that $y_\theta = x + o_\eps (1)$. 
Now, since $y = x + O(\sqrt{\eps})$, we also have
$$
T_\eps^2 = (1+\gamma_2)(1+o_\eps (1))
\|\sigma\|^2_\infty \frac{e^{Kt-2x}}{\eps} (x-y)^2
$$}
and we can conclude.
\end{proof}
Combining \eqref{eq:1} and \eqref{eq:3}  finally
yields
$$
\frac{\eta}{T^2} + \left( \frac{K}2 - \gamma_3 + (2 \delta_2-C)
 e^{-2x} \right) e^{Kt} \frac{(x-y)^2}{\eps} \le T_\alpha + \gamma_3\eps.
$$
It suffices now to choose $\gamma_i$, $i=1,2,3$, such that $C \le 2 {\color{black} \delta_2}$ and
then choose $K = 2 \gamma_3$ and $\eps$ small enough to get: $\frac{\eta}{2 T^2} \le T_\alpha$. 

The following lemma
permits to estimate $T_\alpha$. 
\begin{lem}[Estimate for $T_\alpha$]\label{lem:estim2}
For $D>0$ large enough, we have
\begin{equation} \label{eq:2}
\frac{T_\alpha}D \le  e^{-2x} {\color{black} (-\alpha x_-+ \alpha x_+)}
+ e^{-x} |\alpha x|+ e^{-2x}  \alpha +  e^{Kt-2x}\frac{\sqrt{\alpha}}{\eps}|\alpha x|.
\end{equation}
\end{lem}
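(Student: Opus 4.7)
The plan is to decompose $T_\alpha$ term by term and exploit the four structural assumptions on $(b,\sigma)$ together with the bound $\alpha|x|\le C_0\sqrt\alpha$ coming from Lemma~\ref{lem:penal}. First I would rewrite the $b$-difference by inserting the intermediate value $b(e^{-x}p,p+\alpha x)$:
\[
b(e^{-x}(p+\alpha x),p+\alpha x)-b(e^{-x}p,p)
=\bigl[b(q_2,p+\alpha x)-b(q_1,p+\alpha x)\bigr]
+\bigl[b(q_1,p+\alpha x)-b(q_1,p)\bigr],
\]
where $q_1=e^{-x}p$ and $q_2=e^{-x}(p+\alpha x)$, so that $q_2-q_1=e^{-x}\alpha x$.

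The second bracket, a pure $p$-variation of size $|\alpha x|$, is controlled by the Lipschitz-in-$p$ assumption and contributes at most $\delta_1 e^{-x}|\alpha x|$, which accounts exactly for the term $e^{-x}|\alpha x|$ in the target inequality. For the first bracket I would split on the sign of $x$: when $x\ge 0$ we have $q_2\ge q_1$ and the Lipschitz-in-$q$ bound $|b(q_1,p)-b(q_2,p)|\le\delta_3|q_1-q_2|$ yields an upper bound $\delta_3 e^{-x}\alpha x_+$; when $x<0$ we have $q_2<q_1$ and the monotonicity $\delta_2(q_1-q_2)\le b(q_1,p)-b(q_2,p)$ forces $b(q_2,p)-b(q_1,p)\le -\delta_2 e^{-x}\alpha x_-$. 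Multiplying by the outer factor $e^{-x}$ gives the key combined estimate
\[
e^{-x}\bigl[b(q_2,p+\alpha x)-b(q_1,p+\alpha x)\bigr]
\le e^{-2x}\bigl(\delta_3\,\alpha x_+-\delta_2\,\alpha x_-\bigr),
\]
which is exactly the shape of the first term on the right of \eqref{eq:2}.

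There remain the two extra contributions in the definition of $T_\alpha$. The term $\|\sigma\|_\infty^2 e^{-2x}\alpha$ is already of the required form $e^{-2x}\alpha$. For the quadratic penalization term $(1+\gamma_2^{-1})\|\sigma'\|_\infty^2\,\frac{e^{Kt-2x}}{\eps}(\alpha x)^2$ I would use the Lemma~\ref{lem:penal} estimate $\alpha|x|\le C_0\sqrt\alpha$ to bound one factor of $\alpha|x|$, namely
\[
(\alpha x)^2=\alpha|x|\cdot|\alpha x|\le C_0\sqrt\alpha\,|\alpha x|,
\]
which produces precisely the last term $e^{Kt-2x}\frac{\sqrt\alpha}{\eps}|\alpha x|$.

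Putting these four pieces together and choosing $D\ge\max(\delta_1,\delta_2,\delta_3,\|\sigma\|_\infty^2,C_0(1+\gamma_2^{-1})\|\sigma'\|_\infty^2)$ yields the stated inequality. There is no real obstacle here: the only mildly delicate point is the sign bookkeeping in the $b$-difference, where one has to carefully use monotonicity on one side of $x=0$ and Lipschitz continuity on the other in order to get the asymmetric combination $-\alpha x_-+\alpha x_+$ rather than a plain $|\alpha x|$ (the minus sign in front of $\alpha x_-$ is what will later combine favourably with the good term $2\delta_2 e^{-2x}$ absorbed in Lemma~\ref{lem:estim1}).
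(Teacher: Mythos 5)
Your proof is correct and is essentially the paper's own argument: the terse ``using assumptions on $b$ immediately yields\dots'' in the paper is exactly your decomposition through the intermediate value $b(e^{-x}p,p+\alpha x)$ -- a $q$-increment of size $e^{-x}\alpha x$ handled by monotonicity ($\delta_2$) when $x<0$ and Lipschitz continuity ($\delta_3$) when $x\ge 0$, plus a $p$-increment handled by $\delta_1$ -- together with the bound $(\alpha x)^2\le C_0\sqrt{\alpha}\,|\alpha x|$ for the last term. The only caveat, which you inherit from the lemma's own loose phrasing ``for $D$ large enough'', is that dividing the $\delta$-weighted bound by a large $D$ weakens the coefficient of the favourable term $-e^{-2x}\alpha x_-$ to $\delta_2/D<1$, so including $\delta_2$ in your max does not literally yield \eqref{eq:2}; what is actually established (and what the subsequent case analysis really uses) is the displayed inequality with the constants $\delta_1,\delta_2,\delta_3,\|\sigma\|_\infty^2,C_0(1+\gamma_2^{-1})\|\sigma'\|_\infty^2$ kept in place.
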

\begin{proof}
Using assumptions on $b$ immediately yields
$$
T_\alpha \le  e^{-2x} {\color{black} (-\delta_2 \alpha x_-+ \delta_3 \alpha x_+)}
+\delta_1 e^{-x} |\alpha x|+
\|\sigma\|_\infty^2 e^{-2x}  \alpha +  (1 + \gamma_2^{-1}) 
\frac{e^{Kt-2x}\sqrt{\alpha}}{\eps} \|\sigma'\|_\infty^2 C_0 |\alpha x|
$$
where we used that $|\alpha x| \le C_0 \sqrt{\alpha}$. 
\end{proof}
We next consider $a>0$ such that for all $x \le -a$, we have
{\color{black} $$
-|x|e^{-2x} + 2|x|e^{-x} + e^{-2x} \le 0.
$$}
We now distinguish cases. 

{\bf Case 1: $\mathbf{x_n \le -a}$ for some $\mathbf{\alpha_n \to 0}$.} We choose $n$ large enough
so that $e^{Kt} \frac{\sqrt{\alpha_n}}\eps  \le {\color{black} 1}$ and we get
{\color{black} $$
\frac{\eta}{2T^2} \le D \alpha  (-|x|e^{-2x} + 2|x|e^{-x} + e^{-2x}  ) \le 0
$$}
which implies $\eta \le 0$. {\color{black} Contradiction.} 

{\bf Case 2: $\mathbf{x \ge -a}$ for all $\alpha$ small enough.} We use \eqref{estim:penal} and get
$$
\frac{\eta}{{\color{black} 2}T^2} \le  D e^{2a} (2 \alpha |x| + \alpha + e^{Kt} \frac{\sqrt{\alpha}}\eps |\alpha x|)
\le D ( 2 C_0 \sqrt{\alpha} + \alpha + e^{Kt} \frac{C_0 \alpha}{\eps})
$$
and we let $\alpha \to 0$ to get $\eta \le 0$ in this case too.
The proof  is now complete. 
\end{proof}


\section{Comparison principle for sub-linear solutions}
\label{sec:sublinear}

\begin{proof}[Proof of Theorem~\ref{thm comp spirales sous lin en r}]
Thanks to the change of unknown function described in Subsection~\ref{subsec:change},
we can consider the functions $u$ and $v$ defined on $(0,+\infty) \times \R$ which
are sub- and super-solutions of \eqref{eq:polar-spir}. We can either prove that 
$\bar u \le \bar v$ in $(0,+\infty) \times (0,+\infty)$ or that $u \le v$ in 
$(0,+\infty) \times \R$. 

 For $\theta\in \R$, we define
$$
U(t,x,\theta)=\theta+u(t,x)\quad{\rm and}\quad
V(t,x,\theta)=\theta+v(t,x).
$$
Note that $U$ and $V$ are respectively sub and super-solution of
\begin{equation}\label{eq:1010}
W_t(t,x,\theta)=c e^{-x} |D W| + e^{-2x} DW\cdot e_1 + e^{-2x} \frac
{DW^\perp}{|DW|}D^2W \frac {DW^\perp}{|DW|}.
\end{equation}
We fix $T>0$ and we argue by contradiction by assuming that
$$M=\sup_{t \in [0,T], x, \theta\in \R}
\{U(t,x,\theta)-V(t,x,\theta)\}>0.$$  
In order to use the doubling variable technique, we need
a smooth interpolation function $\Psi$ between polar coordinates
for small $r$'s and Cartesian coordinates for large $r$'s. 
Precisely, we choose $\Psi$ as follows.
\begin{lem}[Interpolation between logarithmic and Cartesian
 coordinates]\label{lem:psi}
There exists a smooth ($C^\infty$) function $\psi:\R ^2\mapsto \R^3$
 such that
\begin{equation}
\label{eq:ass-psi}
\left\{\begin{array}{rll}
\psi(x,\theta+2\pi)&=\psi(x,\theta)& \\
\psi(x,\theta)&=(x,e^{i\theta}) &\quad{\rm if} \quad x\le 0\\
\psi(x,\theta)&=(0, e^xe^{i\theta})&\quad {\rm if} \quad x\ge 1
\end{array}\right.
\end{equation}
and such that there exists two constants $\delta_0 >0$ and $m_\psi>0$ such
that
for $x \le 1$ and $\theta \in \R$,
\begin{equation}\label{psi:add}
{\color{black} \mbox{if}\quad \psi(x,\theta) = (a,b) \text{ then }  |b| \le e}
\end{equation}
and such that for all $x,y,\sigma,\theta$, if
$|\psi(x,\theta)-\psi(y,\sigma)| \le \delta_0$ and $|\theta -\sigma | \le
\frac\pi2$, then
\begin{eqnarray}\label{psi:grad-inf}
|\psi(x,\theta)-\psi(y,\sigma)| &\ge& m_\psi
|(x,\theta)-(y,\sigma)|, \\
\label{psi:grad-inf-bis}
\left| D\psi(x,\theta)^T {\color{black} \odot}\ 
(\psi(x,\theta)-\psi(y,\sigma)) \right| &\ge& m_\psi
|(x,\theta)-(y,\sigma)|
\end{eqnarray}
{\color{black} where  $\odot$ is the tensor contraction defined for a $p$-tensor $A=(A_{i_1,\dots, i_p})$ and a
$q$-tensor $B=(B_{j_1,\dots, j_q})$ by}
$$(A\odot B)_{i_1,\dots,i_{p-1},j_2,\dots, j_{q}}=\sum_{k}
A_{i_1,\dots,i_{p-1},k}B_{k,j_2,\dots, j_q}.$$
\end{lem}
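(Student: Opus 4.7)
The approach is to construct $\psi$ by an explicit formula built from two scalar smooth interpolation functions, then read off the algebraic properties, and finally establish the two lower bounds by a direct expansion of the squared distance and a second order Taylor argument.

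\emph{Construction.} Pick a smooth non-increasing cutoff $\chi:\R\to[0,1]$ with $\chi\equiv 1$ on $(-\infty,0]$ and $\chi\equiv 0$ on $[1,+\infty)$, and set
$$a(x):=x\,\chi(x),\qquad R(x):=1+(e^x-1)(1-\chi(x)),\qquad \psi(x,\theta):=\bigl(a(x),R(x)\cos\theta,R(x)\sin\theta\bigr).$$
Then $(a,R)=(x,1)$ on $(-\infty,0]$ and $(a,R)=(0,e^x)$ on $[1,+\infty)$, so \eqref{eq:ass-psi} holds. With $\chi$ taken so that $R$ is non-decreasing on $[0,1]$ and $(a'(x))^2+(R'(x))^2>0$ everywhere (a standard property of suitable cutoffs), one has $1\le R(x)\le e$ on $(-\infty,1]$, which gives \eqref{psi:add}.

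\emph{Proof of \eqref{psi:grad-inf}.} Expand
$$|\psi(x,\theta)-\psi(y,\sigma)|^{2}=(a(x)-a(y))^{2}+(R(x)-R(y))^{2}+2R(x)R(y)\bigl(1-\cos(\theta-\sigma)\bigr).$$
For $|\theta-\sigma|\le\pi/2$, the elementary bound $1-\cos t\ge 2t^{2}/\pi^{2}$ combined with $R(x)R(y)\ge 1$ controls the angular piece from below by $(4/\pi^{2})(\theta-\sigma)^{2}$. The horizontal piece satisfies $(a(x)-a(y))^{2}+(R(x)-R(y))^{2}\ge c_{0}(x-y)^{2}$ with a universal $c_0>0$ whenever $|\psi(x,\theta)-\psi(y,\sigma)|\le\delta_0$: exactly with $c_0=1$ on $(-\infty,0]^{2}$, with $c_0=e^{2}$ on $[1,+\infty)^{2}$ by the mean value theorem applied to $e^{x}$, and with a uniform positive constant on the compact middle zone by non-vanishing of $(a',R')$. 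The mixed configurations are handled by noting that smallness of $\delta_0$ forces $x$ and $y$ into a common bounded window. Combining yields \eqref{psi:grad-inf}.

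\emph{Proof of \eqref{psi:grad-inf-bis}.} Apply the second order Taylor formula
$$\psi(y,\sigma)-\psi(x,\theta)=D\psi(x,\theta)(y-x,\sigma-\theta)^{T}+E,\qquad |E|\le C\,\sup\|D^{2}\psi\|\cdot|(x-y,\theta-\sigma)|^{2},$$
and act on both sides by $D\psi(x,\theta)^{T}$:
$$D\psi(x,\theta)^{T}\odot(\psi(x,\theta)-\psi(y,\sigma))=D\psi^{T}D\psi\cdot(x-y,\theta-\sigma)^{T}-D\psi^{T}\odot E.$$
A direct computation gives $D\psi^{T}D\psi=\mathrm{diag}\bigl(a'(x)^{2}+R'(x)^{2},R(x)^{2}\bigr)$, whose smallest eigenvalue is uniformly bounded below on $(-\infty,1]$ and equals $e^{2x}$ on $[1,+\infty)$. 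Both $\|D\psi\|$ and $\|D^{2}\psi\|$ grow at worst like $e^{x_+}$, hence $|D\psi^{T}\odot E|\le C'e^{2x_+}|(x-y,\theta-\sigma)|^{2}$, the same growth rate as the main term. Because \eqref{psi:grad-inf} (already proved) forces $|(x-y,\theta-\sigma)|\le\delta_0/m_\psi$, shrinking $\delta_0$ if necessary absorbs the remainder into half of the main term and yields \eqref{psi:grad-inf-bis}.

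\emph{Main obstacle.} The only real technical point is uniformity as $x\to+\infty$, where $D\psi$ and $D^{2}\psi$ blow up exponentially; the argument survives because $D\psi^{T}D\psi$ grows at exactly the same rate $e^{2x}$ as the Taylor remainder so that the ratio stays bounded, the hypothesis $|\theta-\sigma|\le\pi/2$ being essential to prevent the multi-valued coordinate $\theta$ from wrapping around.
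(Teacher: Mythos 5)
Your construction is in fact the same as the paper's: writing $\psi(x,\theta)=(\phi_1(x),\phi_2(x)e^{i\theta})$, the paper takes $\psi=(1-\iota)(x,e^{i\theta})+\iota\,(0,e^{x+i\theta})$ with a cutoff $\iota=1-\chi$, which is exactly your $(a,R)$. Where you differ is in the verification, and your route is sound. The paper proves \eqref{psi:grad-inf} and \eqref{psi:grad-inf-bis} only for $|(x,\theta)-(y,\sigma)|\le\eps$, via first-order Taylor expansions and the non-degeneracy $\inf(|\phi_1'|+|\phi_2'|)>0$, and then adds a separate compactness/contradiction argument with sequences to show that $|\psi(x,\theta)-\psi(y,\sigma)|\le\delta_0$ together with $|\theta-\sigma|\le\pi/2$ forces $|(x,\theta)-(y,\sigma)|\le\eps$. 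You instead prove \eqref{psi:grad-inf} globally by the law-of-cosines expansion of $|\psi(x,\theta)-\psi(y,\sigma)|^2$, treating the angular part with $1-\cos t\ge 2t^2/\pi^2$ and $R\ge1$, and the radial part by a case analysis ($x,y\le0$; $x,y\ge1$ via the mean value theorem; compact middle zone; mixed cases excluded or localized by smallness of $\delta_0$); then you bootstrap, using \eqref{psi:grad-inf} itself to get $|(x,\theta)-(y,\sigma)|\le\delta_0/m_\psi$, and prove \eqref{psi:grad-inf-bis} by a second-order Taylor expansion, the key quantitative point being that $D\psi^TD\psi=\mathrm{diag}(a'^2+R'^2,R^2)$ has smallest eigenvalue comparable to $e^{2x_+}$, the same growth rate as $\|D\psi\|\,\|D^2\psi\|$ in the remainder, so the error is absorbed uniformly in $x$ after shrinking $\delta_0$. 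This buys you an argument that is more explicit and avoids the paper's sequential reduction step, and it makes the behaviour as $x\to+\infty$ transparent, which the paper dispatches rather tersely ("the inequality is true for $x\notin(-1,2)$"). One small caveat: on the compact middle zone, non-vanishing of $(a',R')$ alone only gives the bi-Lipschitz lower bound near the diagonal; you also need injectivity of $x\mapsto(a(x),R(x))$ (immediate here, since $R$ equals $1$ exactly where $a(x)=x$, and is strictly increasing afterwards) together with compactness to get a uniform constant for all pairs in that window -- the paper's own contradiction argument uses exactly this injectivity implicitly, so this is an imprecision to spell out rather than a gap.
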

The proof of this lemma is given in Appendix~\ref{app:a}. 

\paragraph{Penalization.} We consider the following approximation of $M$
\begin{multline}\label{eq:Mea}
M_{\e,\a}=\sup_{t \in [0,T],x,\theta,y,\sigma \in \R} \bigg\{
U(t,x,\theta)-V(t,y,\sigma) \\ -e^{Kt}\frac{|\psi(x,\theta)-
 \psi(y,\sigma)|^2}{2\e}-\frac 1 \e \left(|\theta-\sigma|-\frac \pi 3\right)_+^2
-\frac \a 2 \left|\psi(x,\theta)\right|^2 -\frac
\eta {T-t}\bigg\}
\end{multline}
where $\e,\a, \eta$ are small parameters and $K\ge
0$ is a large constant to be fixed later. For $\a$ and $\eta$ small
enough we remark that $M_{\e,\a}\ge \frac M 2>0$. In order to prove 
that the maximum $M_{\e,\a}$ is attained, we need the following lemma 
whose proof is postponed until Appendix~\ref{app:a}.
\begin{lem}[A priori estimates]\label{lem:est}
There exists a constant $C_2>0$ such that the following estimate holds
true for any $x,y,\theta,\sigma \in\R$
\begin{eqnarray*}
|u_0(x)-u_0(y)|&\le& C_2 +e^{Kt}\frac
{|\psi(x,\theta)-\psi(y,\sigma)|^2}{4\e}\\
|\theta-\sigma|&\le& C_2+\frac 1 {2\e}
\left(|\theta-\sigma|-\frac \pi 3\right)^2_+.
\end{eqnarray*}
\end{lem}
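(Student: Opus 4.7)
The plan is to handle the two displayed inequalities separately, as they are essentially independent.

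For the second inequality, I would reduce it to a one-variable calculus exercise. Setting $s = |\theta-\sigma| \ge 0$ and $f(s) = s - \frac{1}{2\e}(s - \pi/3)_+^2$, one checks that $f(s) = s$ on $[0, \pi/3]$ and on $(\pi/3, \infty)$ that $f'(s) = 1 - (s - \pi/3)/\e$ vanishes at $s^* = \pi/3 + \e$, where $f(s^*) = \pi/3 + \e/2$. Hence $\sup_{s\ge 0} f(s) \le \pi/3 + \e/2$, so any $C_2 \ge \pi/3 + 1$ works uniformly for $\e$ small.

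For the first inequality, the starting point is the global Lipschitz bound $|u_0(x) - u_0(y)| \le L|e^x - e^y|$, where $L$ is the Lipschitz constant of $\bar u_0$, reducing the task to controlling $|e^x - e^y|$ by $|\psi(x,\theta)-\psi(y,\sigma)|^2$ up to an additive constant. I would proceed by case analysis driven by the piecewise structure of $\psi$ from \eqref{eq:ass-psi} and \eqref{psi:add}. When both $x, y \le 1$, the crude bound $|e^x - e^y| \le e$ makes $C_2 \ge eL$ sufficient. When both $x, y \ge 1$, the polar form $\psi(\cdot) = (0, e^{\cdot}e^{i\cdot})$ together with the reverse triangle inequality in $\C$ gives $|\psi(x,\theta)-\psi(y,\sigma)| \ge |e^x - e^y|$; then Young's inequality $L|e^x-e^y| \le L^2\e/e^{Kt} + (e^{Kt}/(4\e))|e^x-e^y|^2$, combined with $e^{Kt} \ge 1$, closes the estimate with $C_2 \ge L^2$. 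In the mixed case $x \le 1 \le y$, property \eqref{psi:add} bounds the modulus of the second component of $\psi(x,\theta)$ by $e$, while that of $\psi(y,\sigma)$ equals $e^y$; the reverse triangle inequality yields $|\psi(x,\theta)-\psi(y,\sigma)| \ge e^y - e$, which for $e^y \ge 2e$ is bounded below by $e^y/2$, and the same Young argument applies after noting $|e^x - e^y| \le 2e^y$, while for $e^y \le 2e$ the bound $|e^x-e^y| \le 3e$ is directly absorbed into the constant.

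The only real difficulty is bookkeeping in the mixed regime; no deeper issue arises because $\psi$ was designed precisely to interpolate smoothly between Cartesian and polar structures while preserving the lower bounds we need. Taking $C_2$ as the maximum of the constants arising in the various cases (and of $\pi/3 + 1$) yields a single constant independent of $\e$ small and of $t$ (thanks to $e^{Kt}\ge 1$), proving both inequalities simultaneously.
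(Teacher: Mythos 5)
Your proof is correct and follows essentially the same route as the paper: the second bound by a one-variable sup/calculus argument, and the first by splitting into the cases $x,y\le 1$, $x,y\ge 1$, and the mixed case, using the Lipschitz bound $|u_0(x)-u_0(y)|\le L|e^x-e^y|$, the lower bound $|\psi(x,\theta)-\psi(y,\sigma)|\ge e^y-e$ from \eqref{psi:add} (resp.\ $\ge|e^x-e^y|$ in the polar regime), and Young's inequality with $e^{Kt}\ge 1$. The only differences are cosmetic (your sub-splitting at $e^y\ge 2e$ versus the paper's direct estimate on $e^y-e$).
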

Using this lemma, we then deduce that
\begin{multline}\label{eq:est1}
U(t,x,\theta)-V(t,y,\sigma) -e^{Kt}\frac 
{|\psi(x,\theta)-\psi(y,\sigma)|^2}{2\e}-\frac 1
\e\left(|\theta-\sigma|-\frac \pi 3\right)^2_+\\
\le  {\color{black} u(t,x)-u_0(x)-v(t,y)
+u_0(y)}+|\theta-\sigma|+|u_0(x)-u_0(y)|\\
-e^{Kt}\frac 
{|\psi(x,\theta)-\psi(y,\sigma)|^2}{2\e}-\frac 1
\e\left(|\theta-\sigma|-\frac \pi 3\right)^2_+\\
\le  2 C_1 + 2 C_2 -e^{Kt}\frac  
{|\psi(x,\theta)-\psi(y,\sigma)|^2}{4\e}-\frac 1
{2\e}\left(|\theta-\sigma|-\frac \pi 3\right)^2_+.
\end{multline}
Using the $2\pi$-periodicity of $\psi$, the maximum is achieved for
$\theta\in [0,2\pi]$. Then, using the previous estimate and the fact
that $-\a(\psi(x,\theta))^2\to -\infty$ as $|x|\to\infty$, we deduce
that the maximum is reached at some point that we still denote
$(t,x,\theta,y,\sigma)$.

\paragraph{Penalization estimates.} 
Using Estimate~\eqref{eq:est1} and the fact that $M_{\e,\a}\ge
0$, we deduce that there exists a constant $C_0=4(C_1+C_2)$ such that
\begin{equation}\label{eq:est2}
\a (\psi(x,\theta))^2+\frac 1
\e\left(|\theta-\sigma|-\frac  \pi3 \right)^2_++e^{Kt}\frac   
{|\psi(x,\theta)-\psi(y,\sigma)|^2}{2\e}\le C_0 .
\end{equation}
On the one hand, an immediate consequence of this estimate is that
$$
|\theta-\sigma|\le \frac  \pi 2
$$
for $\e$ small enough. On the other hand, we deduce from
\eqref{eq:est2} and \eqref{psi:grad-inf}
$$
m_\psi \frac{|\theta-\sigma|^2 + |x-y|^2}{2\e}\le C_0.  
$$
Hence, we  have $|\theta - \sigma| \le \frac{\pi}4$ for $\e$ small enough
so that the constraint $|\theta -\sigma| \le \frac \pi 3$ is not
saturated. We can also choose $\e$ small enough so that 
$$
|x-y| \le \frac12.
$$

In the sequel of the proof, we will also need a better estimate on the
term $\a (\psi(x))^2$; precisely, we need to know that $\a
(\psi(x))^2\to 0$ as $\a\to 0$.  Even if such a result is classical
(see \cite{cil92}), we give details for the reader's convenience.  To
prove this, we introduce
\begin{align*}
 M_{\e,0}= \sup_{t \in [0,T], x,\theta, y, \sigma \in \R} \bigg\{&
 U(t,x,\theta)-V(t,y,\sigma)-e^{Kt}\frac{|\psi(x,\theta)-
   \psi(y,\sigma)|^2}{2\e}\\
 &-\frac 1 \e \left(|\theta-\sigma|-\frac \pi 3\right)^2_+ -\frac \eta
 {T-t}\bigg\}
\end{align*}
which is finite thanks to \eqref{eq:est1}.

We remark that $M_{\e,\a}\le M_{\e,0}$ and that $M_{\e,\a}$ is
non-decreasing when $\alpha$ decreases to zero. We then deduce that
there exists $L$ such that $M_{\e,\a}\to L$ as $\a\to 0$. A simple
computation then gives that
$$
\frac \a 4 (\psi(x,\theta))^2\le M_{\e,\frac \a 2}-
M_{\e,\a}\to 0\quad {\rm as}\quad \a \to 0
$$
and then 
\begin{equation}\label{eq:est3}
\frac \a 2 (\psi(x,\theta))^2\to 0\quad {\rm as}\quad \a \to 0.
\end{equation}

\paragraph{Initial condition.} We now prove the following lemma.
\begin{lem} [Avoiding $t=0$]\label{lem:ci}
For $\e$ small enough, we have $t>0$ for all $\a>0$ small enough.
\end{lem}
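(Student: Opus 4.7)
The plan is to argue by contradiction: assume the supremum $M_{\e,\a}$ is attained at $t=0$ and derive an upper bound for $M_{\e,\a}$ that contradicts the lower bound $M_{\e,\a}\ge M/2$ established earlier. Using the initial conditions from Definition \ref{defi::1}, namely $u(0,x)\le u_0(x)$ for the sub-solution and $v(0,y)\ge u_0(y)$ for the super-solution, and discarding the two non-positive penalization terms, one immediately gets
$$
M_{\e,\a}\le (\theta-\sigma)+\bigl(u_0(x)-u_0(y)\bigr)-e^{Kt}\frac{|\psi(x,\theta)-\psi(y,\sigma)|^2}{2\e}-\frac{\eta}{T}.
$$

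The main obstacle is that $u_0(x)=\bar u_0(e^x)$ is \emph{not} globally Lipschitz in $x$; only $\bar u_0$ is globally Lipschitz in $r=e^x$. Hence the straightforward argument used in the bounded case (proof of Lemma \ref{lem:penal}) fails, and one has to exploit the interpolation function $\psi$ to relate $|e^x-e^y|$ to $|\psi(x,\theta)-\psi(y,\sigma)|$. The key claim I would establish is that there is a universal constant $C^{*}>0$ (depending only on $m_\psi$ and $\psi$) such that
$$
|e^x-e^y|\le C^{*}\,|\psi(x,\theta)-\psi(y,\sigma)|
$$
whenever $|\psi(x,\theta)-\psi(y,\sigma)|\le \delta_0$ and $|x-y|\le \tfrac12$; these two conditions have already been verified in the excerpt above for $\e$ small enough, uniformly in $\a$.

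I would prove the Claim by a three-case analysis built on \eqref{eq:ass-psi}. If $x,y\le 0$, then $\psi(x,\theta)=(x,e^{i\theta})$, so \eqref{psi:grad-inf} gives $|x-y|\le m_\psi^{-1}|\psi(x,\theta)-\psi(y,\sigma)|$, and since $e^x,e^y\le 1$ the mean value theorem yields $|e^x-e^y|\le |x-y|$. If $x,y\ge 1$, then $\psi(x,\theta)=(0,e^xe^{i\theta})$ and by the reverse triangle inequality $|e^x-e^y|\le |e^xe^{i\theta}-e^ye^{i\sigma}|\le |\psi(x,\theta)-\psi(y,\sigma)|$. In the remaining intermediate regime, the constraint $|x-y|\le \tfrac12$ confines both points to a fixed compact subset of $\R^2$, on which the smoothness of $\psi$ together with \eqref{psi:grad-inf} delivers the required bound with a new constant.

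Putting everything together with the global Lipschitz constant $L$ of $\bar u_0$ and the estimate $|\theta-\sigma|\le m_\psi^{-1}|\psi(x,\theta)-\psi(y,\sigma)|$ extracted from \eqref{psi:grad-inf}, I obtain
$$
(\theta-\sigma)+\bigl(u_0(x)-u_0(y)\bigr)\le C\,|\psi(x,\theta)-\psi(y,\sigma)|,\qquad C:=m_\psi^{-1}+L\,C^{*},
$$
with $C$ independent of both $\e$ and $\a$. The elementary inequality $Ct-t^2/(2\e)\le C^2\e/2$ then gives $\tfrac{M}{2}\le M_{\e,\a}\le C^2\e/2-\eta/T$, which is negative as soon as $\e<2\eta/(C^2T)$, a contradiction. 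Since $C$ does not depend on $\a$, the quantifier order stated in the lemma (``for $\e$ small enough, then for every $\a>0$ small enough'') is preserved. The genuinely delicate step is the uniformity in the Claim across the three regimes of $\psi$; the rest is a standard manipulation.
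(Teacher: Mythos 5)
Your proof is correct and follows essentially the same strategy as the paper's: assume $t=0$, split according to the explicit form of $\psi$ (logarithmic region, Cartesian region, intermediate compact zone), use the Lipschitz continuity of $\bar u_0$ together with \eqref{psi:grad-inf} to control $\theta-\sigma$ and $u_0(x)-u_0(y)$ by $|\psi(x,\theta)-\psi(y,\sigma)|$, and let the quadratic penalization yield a bound of order $\e$ contradicting $M_{\e,\a}\ge M/2$. The paper merges your three regions into two ($x,y\le 2$ and $x,y\ge 1$) and, in the Cartesian region, invokes the a priori estimate \eqref{eq:est2} to get an $O(\sqrt\e)$ bound instead of completing the square, but these are cosmetic variations; just record explicitly that your use of \eqref{psi:grad-inf} also requires $|\theta-\sigma|\le\frac\pi2$, which was indeed established at the maximum point for $\e$ small.
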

\begin{proof}
 We argue by contradiction. Assume that $t=0$. We then 
 distinguish two cases.

If the corresponding $x$ and $y$ are small ($x\le 2$ and
$y\le 2$) then, since $u_0$ is Lipschitz continuous and
\eqref{psi:grad-inf} holds true, there exists a constant $L_0>0$ such
that
\begin{align*}
0<\frac M 2\le M_{\e,\a}\le &
U(0,x,\theta)-V(0,y,\sigma)-\frac{|\psi(x,\theta)-\psi(y,\sigma)|^2}{2\e} \\
\le & L_0|(x,\theta)-(y,\sigma)| - m_\psi\frac
{|(x,\theta)-(y,\sigma)|^2}{2\e} \\
\le & \frac {L_0^2}{2m_\psi} \e
\end{align*}
which is absurd for $\e$ small enough.

The other case corresponds to large $x$ and $y$ ($x\ge 1$ and
$y\ge 1$). In this case, since $\bar u_0$ is Lipschitz continuous,
we know that there exists a constant $L_1>0$
such that
$$
0\le \frac M 2 \le M_{\e, \a}\le |U(0,x,\theta)-V(0,y,
\sigma)|\le |\theta-\sigma| + L_1 |e^{x}-e^{y}|.
$$
Using the fact 
$$
|\theta-\sigma|+ L_1 |e^{x}-e^{y}|\le \left(\frac 1 {m_\psi} +
 L_1\right)  |\psi(x,\theta)-\psi(y,\sigma)|\le \left(\frac 1
 {m_\psi} + L_1\right)\sqrt{ 2 C_0} \sqrt \e
$$
we get a contradiction for $\e$ small enough.
\end{proof}

Thanks to Lemma~\ref{lem:ci}, we will now write two viscosity
inequalities, combine them and exhibit a contradiction. We recall that
we have to distinguish cases in order to determine properly in which
coordinates viscosity inequalities must be written (see the
Introduction).

\paragraph{Case 1:  There exists $\a_n\to 0$ such that $x\ge \frac 32$
 and $y\ge \frac 32$.}
We set $X=e^{x+i\theta}$ and $Y=e^{y+i\sigma}$. Consider $\tilde U$
and $\tilde V$ defined in Lemma~\ref{lem:cartesien}.  Remark that,
even if $\theta(X)$ is defined modulo $2\pi$, the quantity
$\theta(X)-\theta(Y)$ is well defined (for $|X|, |Y|\ge e$ and
$|X-Y|\le \frac 12$) and thus so is $\tilde U(t,X)- \tilde V(t, Y)$.
Recall also that $\tilde{U}, \tilde{V}$ are respectively sub and
super-solutions of the following equation
$$w_t = c|Dw| + \widehat{Dw}^\perp\cdot D^2 w \cdot  \widehat{Dw}^\perp$$

Moreover, using the explicit form of $\psi$, we get that
$$
M_{\e,\a}=\sup_{t \in [0,T], X,Y \in \R^2 \backslash B_1(0)}
\left\{\tilde U(t,X)-\tilde V(t,Y) - \frac 
{e^{Kt}}{2\e} |X-Y|^2 - \frac \a 2 |X|^2 - \frac \eta{T-t}\right\}.
$$

Moreover, $-|D_X \tilde U|\le - \frac 1 {|X|}$ (in the viscosity
sense). We set 
$$p= \frac{X-Y} \e e^{Kt}. 
$$
We now use the
Jensen-Ishii Lemma \cite{cil92} in order to get four real {\color{black} numbers} $a,
b, A, B$ such that
\begin{eqnarray*}
a&\le& c|p+\a X| + \frac {(p+\a X)^\perp}{|p+\a X|}(A+ \a I) \frac
{(p+\a X)^\perp}{|p+\a X|}, \\
b&\ge& c|p| + \frac {p^\perp}{|p|}B \frac
{p^\perp}{|p|}.
\end{eqnarray*}
Moreover, $p$ satisfies the following estimate
\begin{equation}\label{eq:p}
|p+\a X|\ge \frac 1 {|X|},\quad |p|\ge \frac 1 {|Y|}, 
\end{equation}
$a, b$ satisfy the following equality
$$a-b=\frac \eta {(T-t)^2} + K e^{Kt} \frac {|X-Y|^2}{2\e}$$
and $A,B$ satisfy the following matrix inequality
$$\left[\begin{array} {cc}
A&0\\
0&-B
\end{array}\right]
\le \frac {2 e^{Kt}}{\e}
\left[\begin{array} {cc}
I&-I\\
-I&I
\end{array}\right].
$$
This matrix inequality implies 
\begin{equation}\label{eq:matrix}
A\xi_1^2\le B\xi_2^2 + \frac {2 e^{Kt}}{\e} |\xi_1-\xi_2|^2
\end{equation}
for all $\xi_1,\xi_2\in \R^2$. Subtracting the two viscosity
inequalities, we then get
\begin{align*}
\frac \eta {T^2}\le &
c|p+\a X|-c|p| +\a +  \frac {(p+\a X)^\perp}{|p+\a X|}A\frac
{(p+\a X)^\perp}{|p+\a X|}- \frac {p^\perp}{|p|}B \frac
{p^\perp}{|p|}\\
\le& \a |c| |X| +\a+  \frac {2 e^{Kt}}{\e} \left( \frac {p+\a X}{|p+\a
   X|}-\frac {p}{|p|}\right)^2\\
\le & |c| \sqrt{C_0}\sqrt \a +\a+\frac {2 e^{Kt}}{\e}\left(2\left(\frac
   {\a X}{\frac 1 {|X|}}\right)^2 + 2 \left(\frac {p}{|p|} \frac {{\color{black} |\a
     X|}}{|p+|\a X||}\right)^2\right)\\
\le & |c| \sqrt{C_0}\sqrt \a +\a+\frac {8 e^{Kt}}{\e}\left({\a
   |X|^2}\right)^2 
\end{align*}
where we have used successively \eqref{eq:matrix}, \eqref{eq:est2} and
\eqref{eq:p}.  Recalling, by \eqref{eq:est3} that $\a |X|^2=o_\a(1)$,
we get a contradiction for $\a$ small enough.

\paragraph{Case 2: There exists $\a_n\to 0$ such that $x\le -\frac 12$ and
 $y\le -\frac 12$.}
Using the explicit
form of $\psi$ and the fact that $U(t,x,\theta)=\theta+u(t,x)$ and
$V(t,y,\sigma)=\sigma+v(t,y)$ with $u$ and $v$ respectively sub and
super-solution of \eqref{eq:polar-spir}, we remark that
$$
M_{\e,\a}=\sup_{t',x',y'}\{u(t',x')-v(t',y')-e^{Kt'}\frac{|\psi(x',\theta)-
 \psi(y',\sigma)|^2}{2\e}-\frac \a 2 |x'|^2 -\frac \eta {T-t'}
+\theta-\sigma -\frac \a 2\}.
$$ 
Moreover, the maximum is reached at $(t,x,y)$, where we recall that
$(t,x,\theta,y,\sigma)$ is the point of maximum in \eqref{eq:Mea}.
Using the Jensen-Ishii Lemma \cite{cil92}, we then deduce the
existence, for all $\gamma_1 >0$, of four real numbers $a,b,A,B$ such
that
\begin{eqnarray*}
 a &\le&  ce^{-x} \sqrt{1 + (p+\alpha x)^2} + e^{-2x} (p+\alpha x ) 
 + e^{-2x} \frac{A + \alpha }{1 + (p+\alpha x )^2} \\ 
 b &\ge&  ce^{-y} \sqrt{1 + p^2} + e^{-2y} p + e^{-2y} \frac{B}{1+p^2}
\end{eqnarray*}
where 
$$p=\frac{x-y}\e e^{Kt}.$$
These inequalities are exactly \eqref{eq:visc1} and \eqref{eq:visc2}.
Moreover $a,b$ satisfy the following inequality
$$
a-b = \frac{\eta}{(T-t)^2} + K e^{Kt}
\frac{|\psi(x,\theta)-\psi(y,\sigma)|^2}{2\eps}  \ge
\frac{\eta}{(T-t)^2} + K e^{Kt} 
\frac{|x-y|^2}{2\eps}
$$
and we obtain \eqref{eq:timederivative}. Moreover, $A,B$ satisfy the
following matrix inequality
$$
\left[\begin{array}{cc} A & 0 \\ 0 & -B \end{array} \right] \le
\frac{e^{Kt}}{\eps} (1+{\gamma_1}) \left[\begin{array}{cc} 1 & -1 \\
   -1 & 1 \end{array} \right] 
$$
which implies \eqref{matrix ineq}. On one hand, from \eqref{eq:visc1}, \eqref{eq:visc2},
\eqref{eq:timederivative} and \eqref{matrix ineq}, 
we can derive \eqref{eq:ji}.  On the other hand, \eqref{eq:est2}, the
fact that $x\le 0$, $y\le 0$ and Lemma~\ref{lem:psi} imply
\eqref{estim:penal} (with a different constant). We thus can apply Lemmas
\ref{lem:estim1}, \ref{lem:estim2} and deduce the desired
contradiction.

\paragraph{Case 3: There exists $\a_n\to 0$ such that $-1\le x, y \le 2$.}
Since $\psi\in C^\infty $, there then exists $M_\psi>0$ (only
depending on the function $\psi$) such that for all $x \in [-1,2]$ and
$\theta \in [-\pi,3 \pi]$,
\begin{equation}\label{eq:mpsi}
|\psi(x,\theta)| + |D \psi(x,\theta)|+|D^2 \psi(x,\theta)|
+|D^3\psi(x,\theta)| \le M_\psi.
\end{equation}
For simplicity of notation, we denote $(x,\theta)$ by $\bar x$ and
$(y,\sigma)$ by $\bar y$. We next define
$$
p_{\bar x}=\frac {e^{K t }}\e D\psi(\bar x)^T \odot
(\psi(\bar x)-\psi(\bar y))\quad{\rm and}\quad p_{\bar y}=\frac {e^{K
   t }}\e D \psi(\bar y)^T \odot (\psi(\bar x)-\psi(\bar y)).
$$
We have $p_{\bar{x}}, p_{\bar{y}} \in\R^2$ and we set $(e_1,e_2)$ a
basis of $\R^2$.
\begin{lem}[Combining viscosity inequalities for $\alpha=0$]
We have for $\alpha=0$
\begin{equation}\label{eq:301}
 \frac \eta{T^2}+ Km_\psi^2 e^{Kt}\frac {|\bar x-\bar y|^2}{2\e}\le  ce^{-x}
 |p_{\bar x}|- ce^{-y} |p_{\bar y}| + e^{-2x} p_{\bar x}\cdot  e_1 -
 e^{-2y}  p_{\bar y}\cdot e_1 + \frac {2e^{Kt}}\e (\I_1+\I_2)
\end{equation}
where 
\begin{eqnarray*}
 \I_1&=&(\psi(\bar x)-\psi(\bar y))\odot \bigg( D^2\psi(\bar x) e^{-x} \widehat
 {p_{\bar x}^\perp}\cdot e^{-x} \widehat {p_{\bar x}^\perp}
 -D^2\psi(\bar y) e^{-y} \widehat{p_{\bar y}^\perp}\cdot e^{-y} \widehat {p_y^\perp} \bigg) \\
 \I_2&=&\left|D\psi(\bar x)e^{-x} \widehat {p_{\bar x}^\perp} -D\psi(\bar y)e^{-y}
   \widehat {p_{\bar y}^\perp} \right|^2.
\end{eqnarray*}
\end{lem}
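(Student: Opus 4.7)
The plan is to apply the Jensen--Ishii lemma~\cite{cil92} to the doubled-variable maximum in Case~3 and to combine the two resulting viscosity inequalities, taking advantage of the non-degeneracy of $\psi$ provided by Lemma~\ref{lem:psi}.

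Since we are in Case~3, $x,y\in[-1,2]$; the constraint $(|\theta-\sigma|-\pi/3)_+^2$ is inactive by the earlier penalization estimates (which force $|\theta-\sigma|\le \pi/4$), and $\psi,D\psi,D^2\psi$ are bounded on the relevant range by~\eqref{eq:mpsi}. Hence the penalty
$$\phi(t,\bar x,\bar y)= e^{Kt}\frac{|\psi(\bar x)-\psi(\bar y)|^2}{2\e}+\frac{\a}{2}|\psi(\bar x)|^2+\frac{\eta}{T-t}$$
is $C^2$ near $(t,\bar x,\bar y)$, with $D_{\bar x}\phi=p_{\bar x}+\a\, D\psi(\bar x)^T\odot \psi(\bar x)$ and $-D_{\bar y}\phi=p_{\bar y}$. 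Because $M_{\e,\a}>0$ forces $\bar x\ne\bar y$, inequality~\eqref{psi:grad-inf-bis} guarantees $p_{\bar x},p_{\bar y}\ne 0$, so the singular geometric equation~\eqref{eq:1010} can be tested meaningfully.

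Applying the Jensen--Ishii lemma to $U-V-\phi$ at its maximum point then yields, for each $\gamma_1>0$, scalars $a,b$ with $a-b=\partial_t\phi$ and symmetric $2\times 2$ matrices $A,B$ satisfying the sub-/super-solution inequalities
$$a\le c e^{-x}|D_{\bar x}\phi|+e^{-2x}D_{\bar x}\phi\cdot e_1+e^{-2x}\widehat{(D_{\bar x}\phi)^\perp}\cdot A\,\widehat{(D_{\bar x}\phi)^\perp},$$
$$b\ge c e^{-y}|p_{\bar y}|+e^{-2y}p_{\bar y}\cdot e_1+e^{-2y}\widehat{p_{\bar y}^\perp}\cdot B\,\widehat{p_{\bar y}^\perp},$$
together with the matrix bound $\left(\begin{smallmatrix}A&0\\0&-B\end{smallmatrix}\right)\le D^2\phi+\gamma_1(D^2\phi)^2$. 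A direct computation shows that the Hessian of $\tfrac12|\psi(\bar x)-\psi(\bar y)|^2$ tested on $(\xi_1,\xi_2)\in\R^2\times\R^2$ decomposes as a ``squared mismatch'' $|D\psi(\bar x)\xi_1-D\psi(\bar y)\xi_2|^2$ plus a ``contracted second-order'' term $(\psi(\bar x)-\psi(\bar y))\odot \bigl(D^2\psi(\bar x)\xi_1\cdot\xi_1-D^2\psi(\bar y)\xi_2\cdot\xi_2\bigr)$; evaluating the matrix inequality at $\xi_1=e^{-x}\widehat{p_{\bar x}^\perp}$ and $\xi_2=e^{-y}\widehat{p_{\bar y}^\perp}$ reproduces exactly $\I_1+\I_2$ on the right, up to the factor $2$ that absorbs $\gamma_1$.

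Subtracting the two viscosity inequalities and using the lower bound $|\psi(\bar x)-\psi(\bar y)|^2\ge m_\psi^2|\bar x-\bar y|^2$ from~\eqref{psi:grad-inf}, I control the time-derivative gap from below by $\frac{\eta}{T^2}+K m_\psi^2 e^{Kt}\frac{|\bar x-\bar y|^2}{2\e}$. Letting $\a\to 0$ and $\gamma_1\to 0$, the $\a$-corrections vanish thanks to~\eqref{eq:est3} and the boundedness of $D\psi,D^2\psi$, giving~\eqref{eq:301}. The principal technical obstacle is the matrix-inequality bookkeeping: because $\psi$ takes values in $\R^3$ while $A,B$ are $2\times 2$, one must choose the $\R^2$-valued test vectors $(\xi_1,\xi_2)$ so that the Hessian of $|\psi(\bar x)-\psi(\bar y)|^2$ recombines precisely as the two geometric terms $\I_1$ and $\I_2$; one must also verify that the $\a$-dependent perturbations of the sub-solution inequality remain controlled, using~\eqref{eq:est3} and~\eqref{psi:add}.
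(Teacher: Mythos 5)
Your overall scheme---Jensen--Ishii applied in the $(\bar x,\bar y)$ variables with the $\psi$-penalty, the decomposition of the Hessian of $\tfrac12|\psi(\bar x)-\psi(\bar y)|^2$ tested on $(\xi_1,\xi_2)=(e^{-x}\widehat{p_{\bar x}^\perp},e^{-y}\widehat{p_{\bar y}^\perp})$ into $\I_1+\I_2$, the lower bound $|\psi(\bar x)-\psi(\bar y)|\ge m_\psi|\bar x-\bar y|$ for the time-derivative term, and the limit $\a\to 0$---is exactly the paper's. The gap is in the non-degeneracy step. The claim ``$M_{\e,\a}>0$ forces $\bar x\neq\bar y$'' is a non sequitur: positivity of the supremum does not rule out a maximum at $\bar x=\bar y$, and nothing in your argument excludes it. Moreover, even where $\bar x\neq\bar y$, \eqref{psi:grad-inf-bis} only gives $|p_{\bar x}|\ge\frac{e^{Kt}}{\e}m_\psi|\bar x-\bar y|$, a bound which degenerates as $|\bar x-\bar y|\to 0$; since the maximum point moves with $\a$, this is not enough to control, uniformly as $\a\to 0$, the replacement of $\widehat{\tilde p_{\bar x}^\perp}$ (where $\tilde p_{\bar x}=p_{\bar x}+\a\,D\psi(\bar x)^T\odot\psi(\bar x)$ is the gradient that actually enters the sub-solution inequality) by $\widehat{p_{\bar x}^\perp}$, nor to test the singular operator in \eqref{eq:1010} without falling back on the $F_*/F^*$ convention at vanishing gradient.

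The correct (and needed) source of non-degeneracy, which is the one the paper uses, is the additive $\theta$-structure: since $U(t,x,\theta)=\theta+u(t,x)$ and $V(t,y,\sigma)=\sigma+v(t,y)$ are exactly affine in $\theta$ and $\sigma$ with slope $1$, the first-order conditions in $\theta$ and $\sigma$ at the interior maximum force the $\theta$-component of $D_{\bar x}\phi=\tilde p_{\bar x}$ and of $-D_{\bar y}\phi=p_{\bar y}$ to equal $1$, hence $|\tilde p_{\bar x}|,\,|p_{\bar y}|\ge\delta_0>0$ uniformly in $\a$ (and in particular $\bar x\ne\bar y$ a posteriori). This uniform bound is what legitimizes writing the two viscosity inequalities for \eqref{eq:1010} with non-vanishing gradients and what yields $|\widehat{\tilde p_{\bar x}^\perp}-\widehat{p_{\bar x}^\perp}|\le C\a/\delta_0\to 0$; combined with \eqref{eq:est3} and the compactness of $\{-1\le x,y\le 2\}$, the $\a$-corrections then disappear and \eqref{eq:301} follows. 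With this substitution your argument coincides with the paper's proof.
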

\begin{proof} 
Recall that $U$ and $V$ are respectively sub and
super-solution of \eqref{eq:1010} and use the Jensen-Ishii Lemma \cite{cil92} in order to deduce that there exist
two real numbers $a,b$ and two $2 \times 2$ real matrices $A,B$ such that
\begin{eqnarray*}
 a &\le&  {\color{black} c}e^{-x} |\tilde p_{\bar x}| + e^{-2x} \tilde p_{\bar x}\cdot e_1 \\
&&+ e^{-2 x} \frac {\tilde p_{\bar x}^\perp}{|\tilde p_{\bar x}|} \left(A +
\a(\psi(\bar x)\odot D^2\psi(\bar x)+ D\psi(\bar x)^T \odot D\psi(\bar x)
)\right)\frac {\tilde p_{\bar x}^\perp}{|\tilde p_{\bar x}|} \\  
 b &\ge&  {\color{black} c}e^{-y} |p_{\bar y}| + e^{-2y}   p_{\bar y}\cdot e_1 +
 e^{-2y}  \frac {p_{\bar y}^\perp}{|p_{\bar y}|} B \frac {p_{\bar
     y}^\perp}{| p_{\bar y}|}  
\end{eqnarray*}
where 
$$\tilde p_{\bar x}=p_{\bar x}+\a D\psi(\bar x)^T\odot
\psi(\bar x).
$$
Remark that , since $D_\theta U=D_\theta V=1$, there exists
$\delta_0>0$ such that
$$\tilde p_{\bar x}\ge \delta_0>0\quad{\rm and}\quad p_{\bar y}\ge \delta_0>0.$$
Moreover $a,b$ satisfy the following equality
$$
a-b = \frac{\eta}{(T-t)^2} + K e^{Kt}
\frac{|\psi(\bar x)-\psi(\bar y)|^2}{2\eps}  
$$
and $A,B$ satisfy the following matrix inequality 
\begin{multline*}
 \left[\begin{array}{cc} A & 0 \\ 0 & -B \end{array} \right] \le
 \frac{2 e^{Kt}}{\eps}\bigg\{ \left[\begin{array}{cc} (\psi(\bar
     x)-\psi(\bar y))\odot D^2\psi(\bar x) & 0 \\ 0 & -(\psi(\bar
     x)-\psi(\bar y))\odot D^2\psi(\bar
     y) \end{array} \right]\\
 +\left[\begin{array}{cc} D\psi(\bar x)^T \odot D\psi(\bar x) &
     -D\psi(\bar y)^T\odot D\psi(\bar x) \\ -D\psi(\bar y)^T\odot
     D\psi(\bar x) & D\psi(\bar y)^T\odot D\psi(\bar y) \end{array}
 \right]\bigg\} \, .
\end{multline*}
This implies
\begin{multline}\label{eq:ineqmatrix}
A\xi\cdot \xi\le B\zeta\cdot \zeta+ \frac {2e^{Kt}}\e
\bigg\{(\psi(\bar x)-\psi(\bar y))\odot D^2\psi(\bar x) \xi\cdot \xi-
(\psi(\bar x)-\psi(\bar y))\odot D^2\psi(\bar y) \zeta\cdot \zeta\nonumber\\ 
+\left|D\psi(\bar x)\xi-D\psi(\bar y)\zeta\right|^2\bigg\}
\end{multline}
for all $\xi,\zeta\in \R^2$. Combining the two viscosity inequalities
and using the fact that $|\psi(\bar x)-\psi(\bar y)|\ge m_\psi|\bar{x}-\bar{y}|$,
we obtain
\begin{align*}
 \frac \eta{T^2}+ Km_\psi^2 e^{Kt}\frac {|\bar x-\bar y|^2}{2\e}\le & {\color{black} c}e^{-x} |\tilde
 p_{\bar x}|- {\color{black} c}e^{-y} |p_{\bar y}| + e^{-2x} \tilde p_{\bar x}\cdot
 e_1 - e^{-2y} p_{\bar y}\cdot e_1 \\
 +& \a e^{-2 x} . \widehat {\tilde p_{\bar x}^\perp}
\left(\psi(\bar x)\odot D^2\psi(\bar x)+ D\psi(\bar x)^T
   D\psi(\bar x) \right)\widehat  {\tilde p_{\bar x}^\perp} \\
 +& \frac {2e^{Kt}}\e (\tilde \I_1 + \tilde \I_2) 
\end{align*}
where $\tilde \I_1$ and $\tilde \I_2$ are defined respectively as
$\I_1$ and $\I_2$ with $p_{\bar x}$ replaced by $\tilde p_{\bar x}$. 
Remarking that there exists a constant $C>0$ such that 
\begin{eqnarray*} {\color{black} c}e^{-x} |\tilde p_{\bar x}| &+& e^{-2x} \tilde p_{\bar
   x}\cdot e_1+\left|\a e^{-2 x} \frac {\tilde p_{\bar
       x}^\perp}{|\tilde p_{\bar x}|}\left(\psi(\bar x)\odot
     D^2\psi(\bar x)+ D\psi(\bar x)^T D\psi(\bar x) \right)\frac
   {\tilde p_{\bar x}^\perp}{|\tilde p_{\bar x}|}\right|\\
&&  \le  {\color{black} c}e^{-x} | p_{\bar x}| + e^{-2x} p_{\bar
   x}\cdot e_1+C\a \left( |D^2\psi(\bar x)|^2 + |D\psi(\bar x)|^2 + |\psi(\bar
 x)|^2\right)\\
&&  \le {\color{black} c}e^{-x} | p_{\bar x}| + e^{-2x}  p_{\bar
   x}\cdot e_1+3M_\psi^2 C \a
\end{eqnarray*}
and 
\begin{align*}
|\tilde \I_1-\I_1|+|\tilde \I_2 -\I_2|\le& C \left|\widehat {\tilde
   p_{\bar x}^\perp}-\widehat {p_{\bar x}^\perp}\right|\\
\le &C \left|\frac {\tilde p_{\bar x}-p_{\bar x}}{|\tilde p_{\bar x}|}\right|+|p_{\bar x}|\left|\frac
   1{|\tilde p_{\bar x}|}-\frac 1{|p_{\bar x}|}\right|\\
\le &C  \left|\frac {\tilde p_{\bar x}-p_{\bar x}}{\delta_0}\right|+\left|\frac
   {|p_{\bar x}|-|\tilde p_{\bar x}|}{\delta_0}\right|\\
\le &2C \left|\frac {\tilde p_{\bar x}-p_{\bar x}}{\delta_0}\right|\\
\le&\frac{2C^2\a}{\delta_0}
\end{align*}
and sending $\a\to 0$ (recall that $\bar{x},\bar{y}$ lie in a compact domain), we get \eqref{eq:301}. 
\end{proof}
\begin{lem}[Estimate on ${\mathcal I}_1$]\label{lem:tech1}
There exists a constant $\overline C_1$ such that
\begin{equation}\label{estim:i1}
|\I_1|\le \overline C_1 |x-y|^2 \end{equation}
\end{lem}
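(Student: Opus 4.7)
The plan is to factor $\mathcal{I}_1$ and show that the term $\psi(\bar x)-\psi(\bar y)$ contributes one factor of $|\bar{x}-\bar{y}|$ while the bracketed difference contributes another. Writing $\xi_x := e^{-x}\widehat{p_{\bar x}^\perp}$ and $\xi_y := e^{-y}\widehat{p_{\bar y}^\perp}$, we have
$$\mathcal{I}_1 = (\psi(\bar x)-\psi(\bar y))\odot\bigl[ D^2\psi(\bar x)\xi_x\cdot\xi_x - D^2\psi(\bar y)\xi_y\cdot\xi_y\bigr].$$
The first factor is immediately controlled by $M_\psi |\bar{x}-\bar{y}|$ via \eqref{eq:mpsi} and the mean value theorem, since we are in Case~3 where $\bar x,\bar y$ lie in a fixed compact set (recall also $|\theta-\sigma|\le \pi/4$, so the branch ambiguity of $\theta$ is harmless).

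For the bracketed difference, since $x,y\in[-1,2]$ and $\widehat{p_{\bar x}^\perp}, \widehat{p_{\bar y}^\perp}$ are unit vectors, all quantities are bounded; it suffices to show a Lipschitz estimate of the form $|D^2\psi(\bar x)-D^2\psi(\bar y)|+|\xi_x-\xi_y|= O(|\bar x-\bar y|)$. The bound on the Hessian difference follows from the $D^3\psi$ bound in \eqref{eq:mpsi}, while $|e^{-x}-e^{-y}|\le e|x-y|\le e|\bar x-\bar y|$ handles the scalar factor in $\xi$. The real issue is therefore the control of $|\widehat{p_{\bar x}^\perp}-\widehat{p_{\bar y}^\perp}| = |\widehat{p_{\bar x}}-\widehat{p_{\bar y}}|$.

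This is the main obstacle, because $p_{\bar x}$ and $p_{\bar y}$ both blow up like $\e^{-1}$. I would attack it by computing
$$p_{\bar x}-p_{\bar y} = \frac{e^{Kt}}{\e}\bigl(D\psi(\bar x)-D\psi(\bar y)\bigr)^T\odot (\psi(\bar x)-\psi(\bar y)),$$
which is $O(e^{Kt}|\bar x-\bar y|^2/\e)$ by \eqref{eq:mpsi}. On the other hand, the key lower bound \eqref{psi:grad-inf-bis} of Lemma~\ref{lem:psi} gives
$$|p_{\bar x}|\ge m_\psi\, \frac{e^{Kt}}{\e}|\bar x-\bar y|,\qquad |p_{\bar y}|\ge m_\psi\, \frac{e^{Kt}}{\e}|\bar x-\bar y|.$$
Using the elementary inequality $|\hat a-\hat b|\le 2|a-b|/\min(|a|,|b|)$, we obtain
$$|\widehat{p_{\bar x}}-\widehat{p_{\bar y}}|\le \frac{2|p_{\bar x}-p_{\bar y}|}{\min(|p_{\bar x}|,|p_{\bar y}|)} \le C |\bar x-\bar y|,$$
with $C$ depending only on $M_\psi$ and $m_\psi$. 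Hence $|\xi_x-\xi_y|\le C'|\bar x-\bar y|$, and combining everything yields $|\mathcal{I}_1|\le \overline{C}_1 |\bar x-\bar y|^2$, which is the claimed estimate (with $|x-y|$ in the statement naturally interpreted as $|\bar x-\bar y|=|(x,\theta)-(y,\sigma)|$). Notice that it is essential that $\e$ has cancelled out: without the matching lower bound from \eqref{psi:grad-inf-bis}, the division by $|p|$ would leave a factor $\e^{-1}$ that no factor of $|\bar x-\bar y|$ could absorb.
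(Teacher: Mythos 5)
Your proposal is correct and follows essentially the same route as the paper: a telescoping/factorized estimate of $\I_1$ using the $C^3$ bounds \eqref{eq:mpsi} on the compact region of Case~3, with the crucial step being the identity $p_{\bar x}-p_{\bar y}=\frac{e^{Kt}}{\e}(D\psi(\bar x)-D\psi(\bar y))^T\odot(\psi(\bar x)-\psi(\bar y))$ combined with the lower bound $|p_{\bar x}|\ge m_\psi\frac{e^{Kt}}{\e}|\bar x-\bar y|$ from \eqref{psi:grad-inf-bis}, so that the factor $\e^{-1}$ cancels in $|\widehat{p_{\bar x}^\perp}-\widehat{p_{\bar y}^\perp}|\le C|\bar x-\bar y|$. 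This is exactly the paper's argument (the paper only invokes the lower bound for $|p_{\bar x}|$, but your symmetric use of both is equivalent), and your reading of $|x-y|$ as $|\bar x-\bar y|$ matches how the estimate is used afterwards.
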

\begin{proof}
In order to prove \eqref{estim:i1}, we write
\begin{align*}
  \frac{|\I_1|}{|\psi(\bar x)-\psi(\bar y))|} \le &
  |(D^2\psi(\bar x)- D^2\psi(\bar y)) e^{-x} \widehat {p_{\bar x}^\perp}\cdot e^{-x} \widehat   {p_{\bar x}^\perp}|\\
  +&| D^2\psi(\bar y)
  (e^{-x}-e^{-y}) \widehat {p_{\bar x}^\perp}\cdot e^{-x} \widehat{p_{\bar x}^\perp}|\\
  +& |D^2\psi(\bar y) e^{-y} \left(\widehat {p_{\bar
        x}^\perp}-\widehat{p_y^\perp}\right)\cdot e^{-x} \widehat
  {p_{\bar x}^\perp}|\\
  +& |D^2\psi(\bar y)
  e^{-y} \widehat{p_y^\perp} \cdot (e^{-x}-e^{-y}) \widehat {p_{\bar x}^\perp}\\
  +& |D^2\psi(\bar y) e^{-y} \widehat{p_{\bar y}^\perp}\cdot e^{-y}
  \left(\widehat {p_{\bar x}^\perp}-\widehat{p_y^\perp}\right)|.
\end{align*}
Thanks to \eqref{eq:mpsi} and $\max(|x|,|y|) \le 2$, we have
\begin{eqnarray*}
|D^2\psi(\bar x)- D^2\psi(\bar y)| &\le& M_\psi |\bar x-\bar y|, \\
|e^{-x}-e^{-y}| &\le&   e^2 |\bar x-\bar y|.
\end{eqnarray*}
We also have the following important estimate
\begin{align*}
 \left|\widehat {p_{\bar x}^\perp}-\widehat {p_{\bar y}^\perp}\right|\le&
\left|\frac {p_{\bar x}-p_{\bar y}}{|p_{\bar x}|}\right|+|p_{\bar y}|\left|\frac
   1{|p_{\bar x}|}-\frac 1{|p_{\bar y}|}\right|\\
\le &  \left|\frac {p_{\bar x}-p_{\bar y}}{|p_{\bar x}|}\right|+\left|\frac
   {|p_{\bar y}|-|p_{\bar x}|}{|p_{\bar x}|}\right|\\
\le & 2 \left|\frac {p_{\bar x}-p_{\bar y}}{|p_{\bar x}|}\right|\\
\le& 2\frac{\frac {e^{Kt}}\e|D\psi(\bar x)-D\psi(\bar y)||\psi(\bar
 x)-\psi(\bar y)|}{\frac {e^{Kt}}\e m_\psi |\bar x-\bar y|}\\
\le & \frac{2 M_{\psi}^2}{m_\psi}|\bar x-\bar y|
\end{align*}
where we have used the fact that $|p_{\bar x}|\ge \frac {e^{Kt}}\e
m_\psi|\bar x -\bar y|$ (see \eqref{psi:grad-inf-bis}). This finally gives
that there exists a constant $\overline C_1$ (depending on $m_\psi$
and $M_\psi$) such that \eqref{estim:i1} holds true. 
\end{proof}
Using the fact that $|p_{\bar x}|,|p_{\bar y}|\le C \frac {e^{Kt}}\e |\bar x-\bar
y|$, we can prove in a similar way the following lemma.
\begin{lem}[Remaining estimates]\label{lem:tech2}
There exist three positive constants $\overline C_2,\overline C_3$ and $\overline C_4$  such that 
\begin{eqnarray*}
|\I_2| &\le & \overline C_2 |\bar x-\bar y|^2, \\
ce^{-x} |p_{\bar x}|- ce^{-y} |p_{\bar y}|&\le &\overline C_3 \frac
{e^{Kt}}\e |\bar x-\bar y|^2 ,\\
e^{-2x} p_{\bar x}\cdot  e_1 - e^{-2y}  p_{\bar y}\cdot e_1 &\le&
\overline C_4 \frac {e^{Kt}}\e |\bar x-\bar y|^2.
\end{eqnarray*}
\end{lem}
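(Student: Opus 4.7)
The plan is to prove each of the three estimates by the same \emph{add-and-subtract} telescoping technique that was used in the proof of Lemma~\ref{lem:tech1}, exploiting four ingredients uniformly available on the compact range $-1\le x,y\le 2$ (hence $\bar x,\bar y$ bounded): (i) the bounds \eqref{eq:mpsi} on $\psi$ and its derivatives, (ii) the Lipschitz estimate $|e^{-x}-e^{-y}|\le e\,|x-y|\le e\,|\bar x-\bar y|$ and similarly $|e^{-2x}-e^{-2y}|\le 2e^{2}|\bar x-\bar y|$, (iii) the bound $|p_{\bar x}|,|p_{\bar y}|\le C\frac{e^{Kt}}{\e}|\bar x-\bar y|$ which follows from $|\psi(\bar x)-\psi(\bar y)|\le M_\psi|\bar x-\bar y|$, and (iv) the key ratio estimate $|\widehat{p_{\bar x}^\perp}-\widehat{p_{\bar y}^\perp}|\le \tfrac{2M_\psi^2}{m_\psi}|\bar x-\bar y|$ already established inside the proof of Lemma~\ref{lem:tech1}.

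For the bound on $\I_{2}$, I would write
\[
D\psi(\bar x)e^{-x}\widehat{p_{\bar x}^\perp}-D\psi(\bar y)e^{-y}\widehat{p_{\bar y}^\perp}
=A_1+A_2+A_3,
\]
with $A_1=(D\psi(\bar x)-D\psi(\bar y))e^{-x}\widehat{p_{\bar x}^\perp}$, $A_2=D\psi(\bar y)(e^{-x}-e^{-y})\widehat{p_{\bar x}^\perp}$, $A_3=D\psi(\bar y)e^{-y}(\widehat{p_{\bar x}^\perp}-\widehat{p_{\bar y}^\perp})$. Each $A_i$ is controlled by $C|\bar x-\bar y|$ using (i)--(iv), so squaring yields $|\I_2|\le \overline C_{2}|\bar x-\bar y|^2$.

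For the second estimate I would split
\[
ce^{-x}|p_{\bar x}|-ce^{-y}|p_{\bar y}|=c(e^{-x}-e^{-y})|p_{\bar x}|+ce^{-y}\bigl(|p_{\bar x}|-|p_{\bar y}|\bigr).
\]
The first term is bounded by $e|c|\cdot|\bar x-\bar y|\cdot C\frac{e^{Kt}}{\e}|\bar x-\bar y|$ by (ii)--(iii). For the second, the reverse triangle inequality together with
\[
p_{\bar x}-p_{\bar y}=\frac{e^{Kt}}{\e}\bigl(D\psi(\bar x)-D\psi(\bar y)\bigr)^{T}\!\odot\bigl(\psi(\bar x)-\psi(\bar y)\bigr)
\]
and (i) gives $\bigl||p_{\bar x}|-|p_{\bar y}|\bigr|\le M_\psi^2\frac{e^{Kt}}{\e}|\bar x-\bar y|^2$. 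Adding the two contributions yields the claimed $\overline C_{3}\frac{e^{Kt}}{\e}|\bar x-\bar y|^2$.

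The third estimate follows from the analogous decomposition
\[
e^{-2x}p_{\bar x}\cdot e_1-e^{-2y}p_{\bar y}\cdot e_1=(e^{-2x}-e^{-2y})\,p_{\bar x}\cdot e_1+e^{-2y}(p_{\bar x}-p_{\bar y})\cdot e_1,
\]
using $|e^{-2x}-e^{-2y}|\le 2e^2|\bar x-\bar y|$, the bound on $|p_{\bar x}|$, and the same formula for $p_{\bar x}-p_{\bar y}$ as above. Since none of the three arguments presents any real difficulty—the only slightly delicate point is the ratio estimate in (iv), which is inherited from Lemma~\ref{lem:tech1}—the main obstacle is really notational bookkeeping: one must keep track of the tensor contraction $\odot$ and of the fact that the constants depend only on $m_\psi$, $M_\psi$, $c$ and on the compact range $[-1,2]\times[0,2\pi]$ of $(x,\theta)$ and $(y,\sigma)$, not on $\e$, $\a$ or $K$.
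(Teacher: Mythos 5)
Your proposal is correct and follows essentially the same route as the paper, which simply remarks that these estimates are proved ``in a similar way'' to Lemma~\ref{lem:tech1} using $|p_{\bar x}|,|p_{\bar y}|\le C\frac{e^{Kt}}{\e}|\bar x-\bar y|$: your telescoping decompositions, the uniform bounds \eqref{eq:mpsi} on the compact range, the Lipschitz bounds on $e^{-x}$, $e^{-2x}$, the formula for $p_{\bar x}-p_{\bar y}$, and the unit-vector estimate inherited from Lemma~\ref{lem:tech1} are exactly the intended ingredients. The constants you obtain indeed depend only on $m_\psi$, $M_\psi$, $c$ and the compact range, as required.
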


Use now Lemmas~\ref{lem:tech1} and \ref{lem:tech2} in 
order to derive from \eqref{eq:301} the following inequality
$$
\frac \eta{T^2}+ Km_\psi e^{Kt}\frac {|\bar x-\bar
 y|^2}{2\e}\le\overline C \frac {e^{Kt}}\e |\bar x-\bar y|^2
$$
with $\overline C=\overline C_1+\overline C_2+\overline C_3+\overline C_4$.
Choosing $K\ge \frac {2 \overline C}{m_\psi}$, we get a contradiction.
\end{proof} 


\section{Construction of a classical solution}
\label{sec:classical}

{\color{black} In this section, our main goal is to prove Theorem \ref{th:cauchy}
which claims the existence and uniqueness of classical solutions 
under suitable assumptions on the initial data $\bar u_0$.
Notice that assumptions \eqref{cond:compatibilite}  on the initial data
imply in particular that
\begin{equation}\label{cond:compatibilite1}
c+2(\bar{u}_0)_r(0)=0 .\end{equation}}

To prove Theorem \ref{th:cauchy}, we first construct a unique weak (viscosity)
solution. We then prove gradient estimates from which it is not
difficult to derive that the weak (viscosity) solution is in fact
smooth; in particular, it thus satisfies the equation in a classical
sense. 

\subsection{Barriers and Perron's method}\label{subsec:perron}

Before constructing solutions of \eqref{eq::4a} submitted to the
initial condition~\eqref{eq::4b}, we first construct appropriate
barrier functions.

\begin{pro}[Barriers for the Cauchy problem] \label{prop:barriers}
 Assume that {\color{black} $\bar{u}_0\in W^{2,\infty}_{loc}(0,+\infty)$ and
$$(\bar{u}_0)_r \in W^{1,\infty} (0,+\infty)\quad \mbox{or}\quad \kappa_{\bar u_0}\in L^\infty(0,+\infty)$$}
with $\bar{u}_0$
 such that \eqref{cond:compatibilite} holds true.  {\color{black} Then} there exists a constant
 $\bar{C}>0$ such that $\bar{u}^\pm (t,r)=\bar{u}_0 (r) \pm \bar{C}t$ are
 respectively a super- and a sub-solution of
 \eqref{eq::4a},\eqref{eq::4b}.
\end{pro}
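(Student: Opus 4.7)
The plan is to first rewrite the nonlinearity $\bar F$ in its natural geometric form and then reduce the proof to an a priori bound. A direct computation shows that
\begin{equation*}
\bar F(r,q,Y) \;=\; \sqrt{1+r^2 q^2}\,\bigl(c + \kappa(r,q,Y)\bigr),
\qquad
\kappa(r,q,Y) := q\,\frac{2+r^2 q^2}{(1+r^2 q^2)^{3/2}} + \frac{rY}{(1+r^2 q^2)^{3/2}},
\end{equation*}
so along any smooth $\bar u$ one has $\frac{1}{r}\bar F(r,\bar u_r,\bar u_{rr}) = \frac{\sqrt{1+r^2 \bar u_r^2}}{r}\bigl(c+\kappa_{\bar u}(r)\bigr)$. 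For the candidate barriers $\bar u^\pm(t,r) := \bar u_0(r) \pm \bar C t$, one has $\partial_t \bar u^\pm = \pm \bar C$ while $\partial_r \bar u^\pm$ and $\partial_{rr} \bar u^\pm$ agree (a.e.) with the corresponding derivatives of $\bar u_0$. Consequently, checking that $\bar u^+$ is a super-solution and $\bar u^-$ a sub-solution reduces to choosing $\bar C$ so large that
\begin{equation*}
\bar C \;\ge\; G(r) \;:=\; \frac{\sqrt{1+r^2 (\bar u_0)_r^2}}{r}\,\bigl|c+\kappa_{\bar u_0}(r)\bigr| \qquad \text{for a.e. } r>0.
\end{equation*}

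Next I would bound $G$ uniformly in $r$ by splitting the half line at $r_0$. For $0<r\le r_0$, the compatibility condition \eqref{cond:compatibilite} yields $|c+\kappa_{\bar u_0}(r)| \le C r$, which precisely cancels the singular factor $1/r$; combined with the local $L^\infty$ bound on $(\bar u_0)_r$ provided by $\bar u_0 \in W^{2,\infty}_{loc}$ (together with the Neumann value $(\bar u_0)_r(0)=-c/2$ forced by \eqref{cond:compatibilite1}), this gives $G(r) \le C\sqrt{1 + r_0^2 \|(\bar u_0)_r\|_{L^\infty(0,r_0)}^2}$ on $(0,r_0]$. For $r\ge r_0$, the factor $1/r$ is harmless, and under either alternative of the hypothesis---$(\bar u_0)_r \in W^{1,\infty}$ (which makes $(\bar u_0)_{rr}$, and hence $\kappa_{\bar u_0}$, bounded) or $\kappa_{\bar u_0} \in L^\infty$---we obtain uniform bounds on both $|c+\kappa_{\bar u_0}|$ and $\sqrt{1+r^2(\bar u_0)_r^2}/r$. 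Taking $\bar C$ to be the supremum of $G$ then delivers the required a.e.~inequality in both signs.

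The hard part will be to upgrade this a.e.~pointwise inequality to a bona fide viscosity sub- and super-solution statement in the sense of Definition \ref{defi::1}, since $(\bar u_0)_{rr}$ is only defined almost everywhere. I would handle this by mollification: set $\bar u_0^\varepsilon := \bar u_0 \ast \eta_\varepsilon$ (with $\eta_\varepsilon$ a standard symmetric mollifier) for $r > 2\varepsilon$, and extend it smoothly near the origin using the Neumann value \eqref{cond:compatibilite1}. The key technical verification---and the main obstacle of the argument---is that the compatibility condition \eqref{cond:compatibilite} survives with an $\varepsilon$-independent constant, and that $\|(\bar u_0^\varepsilon)_r\|_\infty$ and $\|\kappa_{\bar u_0^\varepsilon}\|_\infty$ remain bounded uniformly in $\varepsilon$. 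Step~2 applied to $\bar u_0^\varepsilon$ then produces a common $\bar C$ for which $\bar u_0^\varepsilon \pm \bar C t$ are classical (and hence viscosity) barriers. Since $\bar u_0^\varepsilon \to \bar u_0$ locally uniformly, the discontinuous stability of viscosity sub- and super-solutions (Proposition \ref{pro:stability} and its analogue for super-solutions) transfers the sub/super-solution property to $\bar u^\pm = \bar u_0 \pm \bar C t$. Finally, the initial condition (ii) in Definition \ref{defi::1} is satisfied with equality, since $\bar u^\pm(0,r)=\bar u_0(r)$.
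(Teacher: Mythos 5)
Your core reduction is exactly the one the paper uses: it suffices to show that $\bar C:=\sup_{r>0}\frac1r\bigl|\bar F\bigl(r,(\bar u_0)_r(r),(\bar u_0)_{rr}(r)\bigr)\bigr|$ is finite, splitting at $r_0$, with \eqref{cond:compatibilite} cancelling the $1/r$ singularity on $(0,r_0]$ and the derivative bounds handling $[r_0,+\infty)$. However, your estimate on $[r_0,+\infty)$ under the first alternative rests on a false implication: $(\bar u_0)_r\in W^{1,\infty}(0,+\infty)$ does \emph{not} imply $\kappa_{\bar u_0}\in L^\infty$. For instance, if $(\bar u_0)_r(r)=\sin(r)/\sqrt r$ for $r\ge1$ (suitably extended near the origin), then $(\bar u_0)_r$ and $(\bar u_0)_{rr}$ are bounded there, yet at $r=n\pi$ one has $r(\bar u_0)_r=0$ and $\kappa_{\bar u_0}(n\pi)=n\pi\,(\bar u_0)_{rr}(n\pi)=\pm\sqrt{n\pi}\to\infty$. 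So in that case you cannot bound the two factors of $G=\frac{\sqrt{1+r^2(\bar u_0)_r^2}}{r}\,|c+\kappa_{\bar u_0}|$ separately; instead estimate $\frac1r|\bar F|$ term by term: for $r\ge r_0$, with $q=(\bar u_0)_r$, $Y=(\bar u_0)_{rr}$ bounded, one has $\frac{|c|\sqrt{1+r^2q^2}}{r}\le\frac{|c|}{r_0}+|c|\,\|q\|_\infty$, $\frac{|q|}{r}\,\frac{2+r^2q^2}{1+r^2q^2}\le\frac{2\|q\|_\infty}{r_0}$ and $\frac{|Y|}{1+r^2q^2}\le\|Y\|_\infty$. (Under the second alternative your factorized bound is fine, modulo the global Lipschitz bound on $\bar u_0$ that the paper also invokes.)

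The more serious gap is your final step. The mollification scheme defers precisely the point it hinges on: that $\bar u_0^\varepsilon$ satisfies \eqref{cond:compatibilite} with an $\varepsilon$-independent constant and that $\|(\bar u_0^\varepsilon)_r\|_\infty$ and $\|\kappa_{\bar u_0^\varepsilon}\|_\infty$ are bounded uniformly in $\varepsilon$; since convolution does not commute with the nonlinear curvature operator and the compatibility condition is a pointwise constraint degenerating at $r=0$, this is not routine, and you do not prove it. It is also unnecessary: $\bar u_0\in W^{2,\infty}_{loc}(0,+\infty)$ makes $\bar u^\pm=\bar u_0\pm\bar Ct$ of class $C^{1,1}$ in $r$ and affine in $t$, $\bar F$ is continuous and nondecreasing in its last argument, and a $C^{1,1}$ function satisfying such a degenerate parabolic inequality almost everywhere is automatically a viscosity super- (resp. sub-) solution (a.e. twice differentiability of $C^{1,1}$ functions plus Jensen's lemma). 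Hence the a.e. inequality $r\bar C\ge|\bar F(r,(\bar u_0)_r,(\bar u_0)_{rr})|$ already gives the barrier property in the sense of Definition \ref{defi::1}, which is how the paper's short proof closes without approximating the initial datum. (If you do keep the approximation route, note also that Proposition \ref{pro:stability} concerns suprema of families of sub-solutions; for a convergent sequence you should invoke stability under locally uniform convergence instead.)
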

\begin{proof}
It is enough to prove that the following quantity is finite
$$
\bar{C} = \sup_{r \ge 0} \frac1r \left| \bar{F}(r,(\bar{u}_0)_r (r), (\bar{u}_0)_{rr}(r))
\right|  = \max (\bar{C}_1,\bar{C}_2) 
$$
with
$$
\bar{C}_1 = \sup_{r \in [0,{\color{black} r_0}]}  \frac{| \bar{F}(r,(\bar{u}_0)_r (r),
 (\bar{u}_0)_{rr}(r))|}r, \quad
\bar{C}_2 = \sup_{r \in [{\color{black} r_0},+\infty)}  \frac{| \bar{F}(r,(\bar{u}_0)_r (r) (x),
 (\bar{u}_0)_{rr}(r))|}r.
$$
On one hand, thanks to \eqref{cond:compatibilite} and the Lipschitz
regularity of $\bar u_0$, we have $\bar{C}_1$ is finite. On the other
hand, thanks to Lipschitz regularity and $(\bar u_0)_r \in
W^{1,\infty}$ {\color{black} or $\kappa_{\bar u_0}\in L^\infty$}, $\bar{C}_2$ is also finite. The proof is now complete.
\end{proof}
We now construct a viscosity solution for
\eqref{eq::4a},\eqref{eq::4b}; this is very classical with the results
we have in hand, namely the strong comparison principle and the
existence of barriers. However, we give a precise statement and a
sketch of proof for the sake of completeness.
\begin{pro}[Existence by Perron's method]\label{pro:cauchy}
 Assume that $\buo\in C(0,+\infty)$ and that there
 exists 
{\color{black} $$u^+(t,r):=\buo(r)+f(t) \quad \left(\mbox{resp.}\ u^-(t,r):=\buo(r)-f(t)\right)$$ 
for some continuous function $f$ satisfying $f(0)=0$, which are respectively a
 super- and a sub-solution of \eqref{eq::4a},\eqref{eq::4b}.} Then,
 there exists a {\color{black} (continuous)} viscosity solution $\bar{u}$ of
 \eqref{eq::4a},\eqref{eq::4b} such that {\color{black} \eqref{baru:baru0sst}} holds true for
some constant {\color{black} $\bar C_T$} depending on {\color{black} $f$}.
 {\color{black} Moreover $\bar u$ is the unique  viscosity solution of
 \eqref{eq::4a},\eqref{eq::4b} such that \eqref{baru:baru0sst} holds
 true.}
\end{pro}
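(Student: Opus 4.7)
The plan is to follow the classical Perron method of Ishii \cite{ishii}, exploiting the two main tools already at our disposal: the discontinuous stability result (Proposition~\ref{pro:stability}) and the comparison principle (Theorem~\ref{thm comp spirales sous lin en r}). Define
$$
\bar u(t,r) = \sup\{ w(t,r) \;:\; w \text{ is a sub-solution of }\eqref{eq::4a},\eqref{eq::4b} \text{ with } u^- \le w \le u^+ \}.
$$
The set is non-empty since $u^-$ itself belongs to it, and everything in the set is bounded above by $u^+$.

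First I would show that the upper semi-continuous envelope $\bar u^*$ is a sub-solution. This is immediate from Proposition~\ref{pro:stability} together with $u^- \le \bar u^* \le u^+$. Next I would show that the lower semi-continuous envelope $\bar u_*$ is a super-solution, which is the classical \emph{bump} argument of Perron: if it failed at some interior point $(t_0,r_0)$, one could add a small quadratic perturbation to a suitable test function to produce a strictly larger sub-solution in a neighborhood, still dominated by $u^+$, contradicting the maximality of $\bar u$. For the initial condition we use the hypothesis $f(0)=0$, which forces $u^-(0,r)=u^+(0,r)=\bar u_0(r)$; therefore $\bar u^*(0,r)\le \bar u_0(r)\le \bar u_*(0,r)$, so that $\bar u^*$ and $\bar u_*$ are respectively sub- and super-solutions taking the right initial value (in the sense of Definition~\ref{defi::1}).

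The main step is then to close the argument by invoking Theorem~\ref{thm comp spirales sous lin en r}. Because $u^- \le \bar u_* \le \bar u^* \le u^+ = \bar u_0 + f$ on $[0,T)\times(0,+\infty)$, both envelopes satisfy the growth bound \eqref{eq:uetv} with $C_1=\sup_{[0,T]}|f|$; the ordering at $t=0$ has just been verified. The comparison principle therefore gives $\bar u^* \le \bar u_*$, which combined with the trivial inequality $\bar u_* \le \bar u^*$ yields $\bar u^* = \bar u_* = \bar u$. In particular $\bar u$ is continuous and is a viscosity solution, and \eqref{baru:baru0sst} holds with $\bar C_T = \sup_{[0,T]} |f|$.

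Uniqueness is then a direct consequence of the same comparison principle: if $\bar v$ is another viscosity solution with $|\bar v-\bar u_0|\le \bar C'_T$ on $[0,T)$, then $\bar u$ (regarded as a sub-solution) and $\bar v$ (as a super-solution) both satisfy \eqref{eq:uetv}, with $\bar u(0,\cdot)=\bar u_0=\bar v(0,\cdot)$; Theorem~\ref{thm comp spirales sous lin en r} yields $\bar u\le \bar v$, and exchanging the roles gives the reverse inequality. The only delicate point in this program is the verification of the bump construction at points $r_0>0$ close to the origin, where the equation is singular: one must choose the quadratic perturbation so that the test function remains admissible in Definition~\ref{defi::1}, which is harmless since the bump only modifies the candidate sub-solution on a small set bounded away from $r=0$. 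Apart from this routine check, every ingredient is already available in the paper.
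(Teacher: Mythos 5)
Your proof follows essentially the same route as the paper's: Perron's construction with the stability result (Proposition~\ref{pro:stability}), the bump lemma for the lower envelope, the hypothesis $f(0)=0$ to pin down the initial value, and the comparison principle (Theorem~\ref{thm comp spirales sous lin en r}) both to identify $\bar u^*=\bar u_*$ and to get uniqueness. The only cosmetic difference is that the paper first passes to logarithmic coordinates $x=\ln r$ via Lemma~\ref{lem:equiv} before running Perron's method, whereas you work directly in the variable $r$; since Definition~\ref{defi::1} imposes no condition at $r=0$, your observation that the bump construction is purely interior settles the only point where this choice could matter.
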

\begin{proof}
In view of Lemma~\ref{lem:equiv}, it is enough to construct a
solution $u$ of \eqref{eq:polar-spir} satisfying \eqref{eq:ci} with
$u_0(x) = \bar{u}_0 ({\color{black} e^x})$. 

Consider the set 
$$
\mathcal{S} = \{ v : (0,+\infty) \times \R \to \R, \text{ sub-solution
 of } \eqref{eq:polar-spir} \text{ s.t. } v \le u^+ \} \, .
$$
Remark that it is not empty since $u^- \in \mathcal{S}$ (where
$u^\pm(t,x)=\bar u^\pm(t,r)$ with $x=\ln r$).  We now
consider the upper envelope $u$ of $(t,r)\mapsto \sup_{v\in {\mathcal
S}} v(t,r)$. By Proposition~\ref{pro:stability}, it is a
sub-solution of \eqref{eq:polar-spir}. The following lemma derives
from the general theory of viscosity solutions as presented in
\cite{cil92} for instance.
\begin{lem}\label{lem:bump}
 The lower envelope $u_*$ of $u$ is a super-solution of
 \eqref{eq:polar-spir}.
\end{lem}
We recall that the proof of this lemma proceeds by contradiction
and consists in constructing a so-called bump function around the
point the function $u_*$ is not a super-solution of the equation. 
The contradiction comes from the maximality of $u$ in $\mathcal{S}$. 

Since for all $v \in \mathcal{S}$,
$$
u_0 (x) - f(t) \le v \le u_0 (x) +f(t) ,
$$
with $f(0)=0$
we conclude that 
$$
u_0(x) = u_* (0,x) = u (0,x) \, .
$$
If $\bar u$ satisfies \eqref{baru:baru0sst}, we use the comparison principle and get $u \le u_*$ in
$(0,T) \times \R$ for all $T>0$. Since $u_* \le u$ by construction, we deduce
that $u=u_*$ is a solution of \eqref{eq:polar-spir}. 
The comparison principle also ensures that the solution we constructed
is unique. The proof of Proposition~\ref{pro:cauchy} is now complete. 
\end{proof}

\subsection{Gradient estimates} \label{subsec:gradestim}

In this subsection, we derive gradient estimates for a viscosity
solution $\bar{u}$ of \eqref{eq::4a} satisfying \eqref{baru:baru0sst}.
\begin{pro}[Lipschitz estimates] \label{prop:gradestim} Consider a {\color{black} globally} Lipschitz continuous
function $\bar{u}_0$. We denote by $L_0>0$ and $L_1 >0$ such that
for all $r>0$,
$$
-L_0 \le (\bar{u}_0)_r (r) \le L_1.
$$
Let $\bar u$ be a viscosity solution $\bar{u}$ of
 \eqref{eq::4a},\eqref{eq::4b} satisfying \eqref{baru:baru0sst}. Then $\bar
 u$ is also Lipschitz continuous {\color{black} in space}: $\forall t>0$, $\forall r \ge
 0$,
{\color{black} \begin{equation}\label{estim:gradient}
\left\{\begin{array}{ll}
-\max(1,L_0) \le \bar{u}_r (t,r)\le L_1 & \quad \mbox{if}\quad c\ge 0\\
\\
-L_0 \le \bar{u}_r (t,r)\le \max(1,L_1) & \quad \mbox{if}\quad c\le 0
\end{array}\right. 
\end{equation}}
Moreover, if {\color{black} $\bar{u}_0\in W^{2,\infty}_{loc}(0,+\infty)$ with
$$(\bar{u}_0)_r\in W^{1,\infty}(0,+\infty)\quad \mbox{and}\quad \kappa_{\bar u_0}\in L^\infty(0,+\infty)$$}
and
\eqref{cond:compatibilite} holds true, then {\color{black} $\bar u$} is $\bar{C}$-Lipschitz
continuous with respect to $t$ for all $r >0$ where $\bar{C}$ denotes
the constant appearing in Proposition~\ref{prop:barriers}.
\end{pro}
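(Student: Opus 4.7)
Both estimates rely on viscosity-solution comparison techniques. The time Lipschitz bound is almost immediate from Proposition~\ref{prop:barriers} combined with Theorem~\ref{thm comp spirales sous lin en r}: for fixed $h > 0$, the time-shifted function $w(t,r) := \bar u(t+h, r)$ is again a viscosity solution (since \eqref{eq::4a} is autonomous in $t$), with initial data $\bar u(h,\cdot)$ satisfying $|\bar u(h,r) - \bar u_0(r)| \le \bar C h$ by comparison with the barriers $\bar u_0 \pm \bar C t$. Since constants pass through the equation, $\bar u \pm \bar C h$ are also viscosity solutions, with initial data $\bar u_0 \pm \bar C h$ enveloping $w(0,\cdot)$. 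The sub-linear growth condition \eqref{eq:uetv} is preserved under such translates (against the common Lipschitz initial datum $\bar u_0 + \bar C h$, respectively $\bar u_0 - \bar C h$), so Theorem~\ref{thm comp spirales sous lin en r} yields $|\bar u(t+h,r) - \bar u(t,r)| \le \bar C h$ for all admissible $t,r$.

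\textbf{Spatial estimate.} To establish the upper bound $\bar u_r \le L_1$ when $c \ge 0$, my plan is a doubling-of-variables argument applied to a single function at two distinct spatial points. Introduce
$$
M = \sup_{t \in [0,T],\, 0 < r_1 < r_2} \left\{ \bar u(t,r_2) - \bar u(t,r_1) - L_1 (r_2 - r_1) - \frac{\eta}{T-t} - \alpha\, q(r_1,r_2) \right\}
$$
with a suitable localization $q$, and argue by contradiction assuming $M > 0$. The Lipschitz bound on $\bar u_0$ rules out the supremum being attained at $t = 0$; writing the sub-solution inequality for $\bar u(\cdot, r_2)$ with a test function built from $\bar u(t,r_1) + L_1(r_2 - r_1)$, and the super-solution inequality for $\bar u(\cdot, r_1)$ with a test function built from $\bar u(t,r_2) - L_1(r_2 - r_1)$, and using the Jensen--Ishii lemma to extract second-order matrix information as in Sections~\ref{sec:bounded}--\ref{sec:sublinear}, the combination produces a contradiction. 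The key algebraic fact required is that $\bar F(r,L_1,\cdot)/r$ evaluated against the matrix inequality arising from the penalization has a favorable sign when $c \ge 0$: the first-order terms $c \sqrt{1 + r^2 L_1^2}$ and $L_1(2 + r^2 L_1^2)/(1 + r^2 L_1^2)$ are controlled by the diffusion. The lower bound $\bar u_r \ge -\max(1,L_0)$ is handled by the analogous argument with slope $-L_0$ when $L_0 \ge 1$, and with slope $-1$ when $L_0 < 1$; the threshold $-1$ reflects a structural constraint related to the compatibility condition $c + 2 \bar u_r = 0$ at $r = 0$, which prevents $\bar u_r$ from descending below $-1$ under positive forcing $c \ge 0$. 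The dual case $c \le 0$ is entirely symmetric.

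\textbf{Main obstacle.} The principal difficulty is that \eqref{eq::4a} has $r$-dependent coefficients, so the naive spatial shift $\bar u(t,r) \mapsto \bar u(t,r+h)$ does \emph{not} produce a sub- or super-solution of the same equation, ruling out a direct comparison. Passing to logarithmic coordinates $x = \ln r$ (Subsection~\ref{subsec:change}) turns the spatial shift into a multiplicative rescaling, and the technical heart of the argument is to absorb the resulting non-constant first-order contributions into the diffusive second-order term, in the same spirit as the estimates derived in Lemmas~\ref{lem:estim1}--\ref{lem:estim2}. A secondary subtlety is ensuring the supremum in $M$ is localized: one must add a coercive penalization (e.g.\ of the form $\alpha (\ln r_1)^2$ near $r = 0$ and $\alpha r_2^2$ at infinity) that is compatible with \eqref{baru:baru0sst}, and verify that the spurious error terms it generates vanish as $\alpha \to 0$.
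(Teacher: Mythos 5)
Your overall architecture matches the paper's: the time-Lipschitz bound is obtained exactly as in the paper (shift in time, squeeze between the barriers of Proposition~\ref{prop:barriers}, apply Theorem~\ref{thm comp spirales sous lin en r}), and the spatial bound is attacked by the same sliding/doubling-of-variables contradiction argument, with a separable penalization so that Jensen--Ishii yields second-order quantities $X\le 0\le Y$ with favorable signs. That part of the proposal is sound.

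The gap is in what you call the ``key algebraic fact'', which is precisely the heart of the proof and is only asserted. After subtracting the two viscosity inequalities, the diffusion terms contribute nothing more than a sign (they do \emph{not} ``control'' or ``absorb'' the first-order terms), and the whole burden falls on comparing the zeroth- and first-order terms evaluated with slope $L_1$ (resp. $-\max(1,L_0)$) at the two different radii. Concretely, one must show that the function
$$
g(r)=\frac1r\,\bar F(r,L,0)
= c\sqrt{r^{-2}+L^{2}}+\frac{L}{r}+\frac{L}{r\,(1+r^{2}L^{2})}
$$
is monotone in the appropriate direction for the chosen slope $L$ (this is the function $g$ appearing in the paper's proof after the change $r=e^{x}$, where the extra term comes from the second derivative of the slope function $L e^{x}$). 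For the upper bound with $L=L_1>0$ and $c\ge 0$ this monotonicity is straightforward, but for the lower bound with slope $-\bar L_0$ the computation produces a term proportional to $c^{2}(1-\bar L_0^{2})$, and the argument only closes if $\bar L_0\ge 1$; this is exactly where the threshold $\max(1,L_0)$ comes from. Your proposed explanation of that threshold via the Neumann compatibility condition $c+2\bar u_r=0$ at $r=0$ is not correct and cannot be used: the first part of the proposition makes no compatibility assumption on $\bar u_0$ (nor on $\bar u$), and the constant $1$ arises purely from the sign condition in the monotonicity computation, not from any boundary behaviour at the origin. Without carrying out this computation (and handling the localization errors, which the paper does with the $\frac\alpha2 x^{2}+\frac\alpha2 y^{2}$ penalization in logarithmic coordinates and the bound $e^{-z}((|c|+\bar L_0)|z|+ze^{-z}+e^{-z})\le C_3$), the spatial estimate is not proved.
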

\begin{proof} ~ \\
{\color{black} \noindent {\bf Step 1: gradient estimates}}\\
Proving \eqref{estim:gradient} {\color{black} for $c\ge 0$} is equivalent to prove that the
solution $u$ of \eqref{eq:polar-spir} satisfies the following gradient
estimate: $\forall t>0$, $\forall x \in \R$,
\begin{equation}\label{estim:gradient-x}
-\bar L_0e^x \le u_x (t,x)\le L_1 e^x  
\end{equation}
where $\bar L_0=\max(1,L_0)$.
We will prove each inequality separately.
Since $\bar{u}$ is sublinear,
there exists $C_u>0$ such that for all $x \in \R$
$$
|u(t,x)| \le C_u (1 + e^x) .
$$
Eq.~\eqref{estim:gradient} is equivalent to prove
\begin{align*}
{\color{black} M^0} &= \sup_{t \in (0,T), x \le y \in \R} \left\{ u (t,x)+\bar L_0 e^x
 - u (t,y) - \bar L_0 e^y \right\} \le 0 \\
{\color{black} M^1} &= \sup_{t \in (0,T), x \ge y \in \R} \left\{ u (t,x)-L_1 e^x
 - u (t,y) + L_1 e^y \right\} \le 0. 
\end{align*}

We first prove that ${\color{black} M^0} \le 0$. We argue by contradiction by assuming
that ${\color{black} M^0}>0$ and we exhibit a contradiction. The following supremum
$$
{\color{black} M_\a^0}= \sup_{t \in (0,T), x \le y \in \R} \left\{ u(t,x) - u(t,y) +
 \bar L_0 e^x-\bar L_0 e^y -\frac
\a 2 x^2 - \frac \a 2 y^2
- \frac\eta{T-t} \right\}
$$
is also positive for $\a$ and $\eta$ small enough.

Using the fact that, by assumption on $\bar u_0$,
\begin{equation}\label{eq:borne1}
u(t,x)-u(t,y)+\bar L_0e^x-\bar L_0e^y\le
u(t,x)- u_0(x)+ u_0(x)- u_0(y)+ u_0(y)-u(t,y)+\bar L_0 
e^x-\bar L_0 e^y\le 2C_1
\end{equation}
and the fact
that $-\frac \a 2 x^2-\frac \a 2 y^2\to -\infty $ as $x\to \pm \infty$
or $y\to \pm \infty$, we deduce that the supremum is achieved at a point
$(t,x,y)$ such that $t \in (0,T)$ and $x > y$.

Moreover, we deduce using \eqref{eq:borne1} and the fact that $M_\a>0$,
that there exists a constant $C_0:=4C_1$ such
that $x$ and $y$ satisfy the following inequality
$$
\a x^2+ \a y^2\le C_0.
$$

Thanks to Jensen-Ishii's Lemma (see e.g. \cite{cil92}), we conclude that
there exist $a,b,X,Y \in \R$ such that
\begin{eqnarray*}
 a &\le& c e^{-x} \sqrt{1+ (-\bar L_0 e^x+\a x)^2} + e^{-2x} (-\bar
 L_0e^x+\a x) + e^{-2x} \frac{X-\bar L_0e^x+ \a }
 {1+ (-\bar L_0e^x+\a x)^2}, \\
 b &\ge& c e^{-y} \sqrt{1+ (\bar L_0e^y+\a y)^2} - e^{-2y} (\bar
 L_0e^y+\a y) + e^{-2y}   \frac{Y-\bar L_0e^y-\a} {1+ (\bar L_0e^x+\a 
y)^2}, \\
 a-b &=& \frac{\eta}{(T-t)^2}, \qquad
 \left[ \begin{array}{ll} X & 0 \\ 0 & -Y \end{array} \right] \le 0 .
\end{eqnarray*}
Subtracting the viscosity inequalities and using the last line yield
\begin{align*}
\frac{\eta}{T^2} \le&  c e^{-x} \sqrt{1+ (\bar L_0e^x-\a x)^2}-c e^{-y} \sqrt{1+
(\bar L_0e^y+ \a y)^2}+ \a e^{-2x} (x+1)+  \a e^{-2y}
(y+1) \\
&-\bar L_0e^{-x}+\bar L_0e^{-y}-\frac {\bar L_0 e^{-x}}  {1+
 (\bar L_0e^x-\a x)^2}+ \frac{\bar L_0e^y} {1+ (\bar L_0e^x+\a y)^2}
\end{align*}

Using the fact that the functions $z\mapsto \sqrt{1+z^2}$ and $z\mapsto
\frac 1 {1+z^2}$ are $1$-Lipschitz, we deduce that
\begin{align*}
\frac{\eta}{T^2} \le&  ce^{-x} \sqrt{1+ \bar L_0^2 e^{2x}}-ce^{-y} \sqrt{1+
\bar L_0^2 e^{2y}}+ e^{-x} \a(({\color{black} |c|}+\bar L_0)|x|+x e^{-x}+e^{-x})\\
&+ \a e^{-y}
(({\color{black} |c|}+\bar L_0)|y|+ y e^{-y} +e^{-y})
-\bar L_0e^{-x}+\bar L_0e^{-y}-\frac {\bar L_0 e^{-x}}  {1+
 (\bar L_0e^x)^2}+ \frac{\bar L_0e^y} {1+ (\bar L_0e^x)^2}
\end{align*}
Remarking that the function $z\mapsto e^{-z} (({\color{black} |c|}+\bar L_0)|z|+z
e^{-z}+e^{-z})$ is bounded from above by a constant $C_3$, we have
\begin{equation}\label{eq:g}
\frac{\eta}{T^2} \le 2 C_3 \a+ g(x)-g(y)
\end{equation}
where
$$g(x)=e^{-x} c \sqrt{1+ \bar L_0^2e^{2x}} -\bar L_0e^{-x}-\frac {\bar L_0 
e^{-x}}  {1+  \bar L_0^2e^{2x}}.
$$
{\color{black} \noindent {\bf Case A: $c\ge 0$}}\\
We now rewrite $g$ in the following way
\begin{align*}
g(x)=&c\sqrt{e^{-2x}+\bar L_0^2}-\bar L_0 e^{-x} -\frac {\bar L_0}{e^x+\bar
 L_0^2 e^{3x}}\\
=&\frac{c^2 e^{-2x}(1-\bar L_0^2)-\bar L_0^2e^{-2x}}{c\sqrt{e^{-2x}+\bar
   L_0^2}+\bar L_0 e^{-x}}-\frac {\bar L_0}{e^x+\bar
 L_0^2 e^{3x}}\\
=&\frac{c^2(1-\bar L_0^2)}{c\sqrt{e^{2x}+\bar L_0^2e^{4x}}+\bar L_0
 e^{x}}-\frac{\bar L_0^2}{\sqrt{e^{2x}+\bar L_0^2e^{4x}}+\bar L_0
 e^{x}}-\frac {\bar L_0}{e^x+\bar
 L_0^2 e^{3x}}
\end{align*}
and use the fact that $\bar L_0\ge 1$ to deduce that $g$ is
non-decreasing. Hence, we finally get
$$
\frac{\eta}{T^2} \le 2 C_3\a
$$
which is absurd for $\a$ small enough.
\bigskip

In order to prove that ${\color{black} M^1} \le 0$, we proceed as before and we obtain
\eqref{eq:g} where
{\color{black} $$g(x)=c  \sqrt{e^{-2x}+  L_1^2} +L_1 e^{-x}+ \frac {L_1}  {e^{x}+  L_1^2 e^{3x}}.
$$}
Remarking that $g$ is decreasing permits us to conclude in this case.\\
{\color{black} \noindent {\bf Case B: $c\le 0$}\\
We simply notice that the equation is not changed if we change $(w,c)$ in $(-w,-c)$.
}
\bigskip

{\color{black} \noindent {\bf Step 2: Lipschitz in time estimates}}\\
It remains to prove that $\bar{u}$ is $\bar{C}$-Lipschitz continuous
with respect to $t$ under the additional compatibility
condition~\eqref{cond:compatibilite}. To do so, we fix $h>0$ and we
consider the following functions:
$$
\bar{u}_h (t,r) = \bar{u} (t+h,r) - \bar{C} h \quad \text{ and }
\bar{u}^h (t,r) = \bar{u}(t+h,r) + \bar{C}h.
$$
Remark that $\bar{u}_h$ and $\bar{u}^h$ satisfy \eqref{eq::4a}. Moreover,
Proposition~\ref{prop:barriers} implies that
$$
\bar{u}_h(0,r) \le \bar{u}_0 (r) \le \bar{u}^h (0,r).
$$
Thanks to the comparison principle, we conclude that $\bar{u}_h \le
\bar{u} \le \bar{u}^h$ in {\color{black} $[0,+\infty)\times (0,+\infty)$}; since $h$ is arbitrary, we
thus conclude that $\bar{u}$ is $\bar{C}$-Lipschitz continuous with
respect to $t$. The proof of Proposition~\ref{prop:gradestim} is now
complete.
\end{proof}

\subsection{Proof of Theorem~\ref{th:cauchy}} \label{subsec:class}

It is now easy to derive Theorem~\ref{th:cauchy} from
Propositions~\ref{pro:cauchy} and \ref{prop:gradestim}.
\begin{proof}[Proof of Theorem~\ref{th:cauchy}]
 Consider the viscosity solution $\bar{u}$ given by
 Proposition~\ref{pro:cauchy}
{\color{black}  with $f(t)=\bar C t$ where the constant $\bar C$ 
is given in the barrier presented in Proposition \ref{prop:barriers}.}

 This function is
 continuous. Moreover, thanks to Proposition~\ref{prop:gradestim},
 $\bar{u}_t$ and $\bar{u}_r$ are bounded in the viscosity sense;
 hence $u$ is Lipschitz continuous. In particular, there exists a set
 $\tilde{N} \subset {\color{black} (0,+\infty)} \times (0,+\infty)$ of null measure such
 that for all $(t,r) \notin \tilde{N}$, $\bar{u}$ is differentiable at
 $(t,r)$.

 Thanks to the equation
\begin{equation}\label{eq::vsd}
\displaystyle{\bar{u}_t-a(r,\bar{u}_r)\bar{u}_{rr}=f(r,\bar{u}_r)}
\quad \mbox{for}\quad (t,r)\in (0,+\infty)\times (0,+\infty)
\end{equation}
 with
$$
a(r,\bar{u}_r)=\frac{1}{1 + r^2\bar{u}_r^2}, \quad
f(r,\bar{u}_r)=\frac{1}{r}\left\{c \sqrt{1 + r^2\bar{u}_r^2} + \bar{u}_r
 \left(\frac{2+ r^2 \bar{u}_r^2}{1+ r^2 \bar{u}_r^2}\right)\right\}
$$ 
we also have that $\bar{u}_{rr}$ is locally bounded in the viscosity
sense. This implies that $\bar{u}$ is locally $C^{1,1}$ with respect
to $r$, and in particular, we derive from Alexandrov's theorem
\cite[p.~242]{eg92} that for all $t>0$ there exists a set $N_t \subset
[0,+\infty)$ of null measure such that for all $r \notin N_t$,
$\bar{u}(t,\cdot)$ is twice differentiable with respect to $r$,
i.e. there exist $p, A \in \R$ such that for $\rho$ in a neighborhood
of $r$, we have
\begin{equation}\label{eq:dl2}
 \bar{u} (t,\rho) = \bar{u} (t,r) + p (\rho-r) + \frac12 A (\rho-r)^2 + o ((\rho-r)^2). 
\end{equation}
From $\tilde{N}$ and $\{ N_t \}_{t >0}$, we can construct a set $N
\subset (0;+\infty) \times (0;+\infty)$ of null measure such that for
all $(t,r) \notin N$, $\bar{u}$ is differentiable with respect to time
and space at $(t,r)$ and there exists $A \in \R$ such that
\eqref{eq:dl2} holds true.  We conclude that
$$
\bar{u} (s,\rho) = \bar{u} (t,r) + \partial_t \bar{u} (t,r) (s-t)
+  \partial_r \bar{u}(t,r) (\rho-r) + \frac12 A (\rho-r)^2 + o ((\rho-r)^2) + o (s-t).
$$
In particular, \eqref{eq::vsd} holds true for $(t,r) \notin N$.

We deduce from the previous discussion that $\bar{u}_t - \bar{u}_{rr}
=\tilde{f} \in L^\infty_{loc}$ holds true almost everywhere, and thus
in the sense of distributions. From the standard interior estimates
for parabolic equations, we get that $\bar{u} \in W^{2,1;p}_{loc}$ for
any $1<p<+\infty$. Then from the Sobolev embedding (see Lemma 3.3 in
\cite{LSU}), we get that for $p>3$, and $\alpha=1-3/p$, we have
$\bar{u}_r \in C^{\alpha,\alpha/2}_{loc}$.

We now use that \eqref{eq::vsd} holds almost everywhere.
Therefore we can apply the standard interior Schauder theory (in
H\"{o}lder spaces) for parabolic equations.  This shows that $\bar
u\in C^{2+\alpha,1+\alpha/2}_{loc}$. Bootstrapping, we finally get
that $\bar u\in C^\infty_{loc}$, which ends the proof of the theorem.
\end{proof}

\section{Construction of a {\color{black} general} weak (viscosity) solution}
\label{sec:withoutcompatibility}

{\color{black} The main goal of this section is to prove Theorem \ref{th:cauchywc}.
We start with general barriers, H\"{o}lder estimates in time 
and finally an approximation argument.}
\begin{pro}[Barriers for the Cauchy problem without the Compatibility
 Condition] \label{prop:barriers2} Let $\bar{u}_0 \in {\color{black} W^{2,\infty}_{loc}(0,+\infty)}$ 
be such that there exists $C_0$ such that
\begin{equation}\label{eq:hypu0}
|\buor|\le C_0 \quad{\rm and}\quad|\kappa_{\buo}|\le C_0.
\end{equation}
Then, there exists a constant $\bar{C}>0$ (depending only on $C_0$)
such that for any function $B:[0,T]\to \R$ with $B(0)=0$  
and $B'\ge \bar C(1+\bar Ct)$,  $\bar{u}^\pm
(t,r)=\bar{u}_0 (r) \pm \frac{\bar C t}r \pm B(t)$ are respectively a
super- and a sub-solution of \eqref{eq::4a},\eqref{eq::4b}.
\end{pro}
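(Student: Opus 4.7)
The plan is to verify that $\bar u^+ := \bar u_0(r) + \bar C t/r + B(t)$ is a classical super-solution on $(0,+\infty) \times (0,+\infty)$; since $\bar u^+$ is smooth there and $\bar u^+(0,\cdot) = \bar u_0$ (as $B(0)=0$), this yields a viscosity super-solution in the sense of Definition~\ref{defi::1}. The sub-solution $\bar u^-$ is handled analogously. The key preliminary step is an algebraic reformulation: setting $v := \bar u^+$ and $A := r v_r = r(\bar u_0)_r - \bar C t/r$, and noting the cancellation
$$
r v_{rr} \;=\; r(\bar u_0)_{rr} + \frac{2\bar C t}{r^2} \;=\; K - \frac{2A}{r}, \qquad K := r(\bar u_0)_{rr} + 2(\bar u_0)_r,
$$
in which $K$ depends only on $\bar u_0$ (the singular $\bar C t/r^2$ contribution is absorbed into $-2A/r$), a direct computation using~\eqref{eq:barF} gives
$$
\bar F(r,v_r,v_{rr}) \;=\; c\sqrt{1+A^2} \;+\; \frac{A}{r}\cdot\frac{A^2}{1+A^2} \;+\; \frac{K}{1+A^2}.
$$
Since $r v_t = \bar C + r B'(t)$ and $A/r = (\bar u_0)_r - \bar C t/r^2$, the super-solution inequality $r v_t \ge \bar F$ becomes
$$
\bar C + r B'(t) + \frac{A^2}{1+A^2}\left(\frac{\bar C t}{r^2} - (\bar u_0)_r\right) \;\ge\; c\sqrt{1+A^2} + \frac{K}{1+A^2}. \qquad (\star)
$$

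The right-hand side of $(\star)$ is then controlled using the hypothesis~\eqref{eq:hypu0}. Setting $\tilde A := r(\bar u_0)_r$, the definition of $\kappa_{\bar u_0}$ rearranges into
$$
K = \kappa_{\bar u_0}(1+\tilde A^2)^{3/2} - (\bar u_0)_r \tilde A^2,
$$
so $|K| \le C_0 (1+\tilde A^2)^{3/2} + C_0 \tilde A^2$, and trivially $c\sqrt{1+A^2} \le |c|(1 + rC_0 + \bar C t/r)$.

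The analysis then splits into two regimes. For $r$ below a threshold $r_0$ depending only on $|c|$, $C_0$ and $\bar C$, the term $\bar C t/r$ dominates $\tilde A$, so $|A| \gtrsim \bar C t/r$, the ratio $\mu := A^2/(1+A^2)$ is bounded below (close to $1$), and the favorable left-hand contribution $\mu\,\bar C t/r^2$ absorbs the dangerous right-hand contribution $|c|\bar C t/r$ as soon as $r \le \mu/|c|$. In the same regime $K/(1+A^2)$ is harmless because $1+A^2 \gtrsim \bar C^2 t^2/r^2$, making this term $O(1)$. For $r \ge r_0$, the terms $\bar C t/r$ and $\bar C t/r^2$ are bounded, $A \approx \tilde A$, and the above bound on $|K|$ yields $|K|/(1+A^2) \le C'(1+r)$, so the right-hand side of $(\star)$ is bounded by $M_1(1+\bar C t) + M_2\, r$ with constants $M_1, M_2$ depending only on $|c|$, $C_0$, $r_0$. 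Choosing $\bar C$ large (absorbing $M_1$ and $M_2$) and invoking $B'(t) \ge \bar C(1+\bar C t)$ closes $(\star)$. For $\bar u^-$ the analogue quantity is $A = r(\bar u_0)_r + \bar C t/r$; the positive contribution $(A^3)/(r(1+A^2)) \sim \bar C t/r^2$ now sits on the right-hand side of the sub-solution inequality $r v_t \le \bar F$ and trivially dominates the left-hand side near $r = 0$, so the same scheme applies.

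The main obstacle is the small-$r$ matching: both sides of $(\star)$ carry contributions singular in $1/r$ or $1/r^2$, and the whole purpose of the singular correction $\bar C t / r$ (with precisely that power of $r$) is to produce, through the identity $rv_{rr} = K - 2A/r$, the negative-in-$A$ term on the left of $(\star)$ whose magnitude beats the dangerous $|c|\bar C t/r$ arising from $c\sqrt{1+A^2}$. The key quantitative point behind the cancellation is that $\mu = A^2/(1+A^2)$ stays bounded away from $0$ in the small-$r$ region, as a consequence of $|A| \gtrsim \bar C t/r$ there.
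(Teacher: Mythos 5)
Your algebraic reformulation is correct and clean: with $A=rv_r=r(\buo)_r-\bar Ct/r$ and $K=r(\buo)_{rr}+2(\buo)_r$ one indeed gets $\bar F(r,v_r,v_{rr})=c\sqrt{1+A^2}+\frac{A}{r}\cdot\frac{A^2}{1+A^2}+\frac{K}{1+A^2}$, your identity $K=\kappa_{\buo}(1+\tilde A^2)^{3/2}-(\buo)_r\tilde A^2$ (with $\tilde A=r(\buo)_r$) is right, and your small-$r$ absorption mechanism (the good term $\mu\,\bar Ct/r^2$ beating $|c|\bar Ct/r$ once $\mu=A^2/(1+A^2)$ is bounded below and $r\lesssim 1/|c|$) is exactly the paper's Subcase~2.2. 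However, there is a genuine gap in your large-$r$ regime. You assert that for $r\ge r_0$ one has $A\approx\tilde A$ and hence $|K|/(1+A^2)\le C'(1+r)$, so that the right-hand side of $(\star)$ is $M_1(1+\bar Ct)+M_2r$. This fails precisely in the case where $\tilde A$ and $\bar Ct/r$ are both large and nearly equal: nothing in \eqref{eq:hypu0} prevents this (take $t$ of order $r^2$), and then $A$ is small, $1+A^2\approx 1$, while the hypotheses allow $|K|$ of order $C_0(1+\tilde A^2)^{3/2}\sim C_0^4r^3$ (the bounds $|(\buo)_r|\le C_0$, $|\kappa_{\buo}|\le C_0$ permit $|r(\buo)_{rr}|$ of order $(1+\tilde A^2)^{3/2}$). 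So the right-hand side of $(\star)$ is not $O\bigl((1+\bar Ct)+r\bigr)$; it genuinely contains a cross term of order $r\,\bar Ct$, obtained by writing $|\tilde A|^3\le C_0^2r^2|\tilde A|\le 2C_0^2\,r\,\bar Ct$ when $|\tilde A|\le 2\bar Ct/r$. Absorbing this cross term is the whole reason the statement requires $B'\ge\bar C(1+\bar Ct)$ rather than $B'\ge\bar C$ (it is eaten by the $r\bar C^2t$ part of $rB'$), and it is the content of the paper's Case~1, where one sets $r(\buo)_r=\rho\,\bar Ct/r$ and treats $\tfrac12<\rho<2$ separately. Your argument, as written, skips exactly this case, and your final bound would (misleadingly) suggest the weaker hypothesis $B'\ge\bar C$ suffices away from a bounded set of radii.

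Two smaller points. In the small-$r$ regime, ``$\bar Ct/r$ dominates $\tilde A$'' does not follow from $r\le r_0$ alone (take $t$ small); you need an additional dichotomy on whether $\bar Ct/r$ is large (then $|A|\ge 1$, $\mu\ge\tfrac12$, and your absorption works) or bounded (then every term on the right of $(\star)$ is bounded by a constant depending only on $c,C_0$, and $\bar C$ large suffices). This is easy to repair and corresponds to the paper's Subcases~2.1--2.2. Finally, avoid letting $r_0$ depend on $\bar C$: the thresholds should depend only on $|c|$ and $C_0$, after which $\bar C$ is chosen, otherwise the choice is circular. With the $\rho$-style case distinction (relative size of $\tilde A$ and $\bar Ct/r$) replacing your split on $r$ alone, your computation closes and coincides in substance with the paper's proof.
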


\begin{proof}
We only do the proof for the super-solution since it is similar
(and even simpler) for the sub-solution. {\color{black} We also do the proof only in the case $c\ge 0$, 
noticing that the equation is unchanged if we replace $(w,c)$ with $(-w,-c)$.}

It is convenient to write $A$ for $\bar C t$ and do the
computations with this function.
Since $|\kappa_{\buo}|\le C_0$, we have
$$\left|\frac {r(\buo)_{rr}}{(1+(r\buor)^2)^{\frac 32}} + \buor \left(\frac
   {2+(r\buor)^2}{(1+(r\buor)^2)^{\frac 32}}\right)\right|\le C_0.
$$
{\color{black} Since $|u_r|\le C_0$}, there exists $c_1>0$ such that
$$\left|r(\buo)_{rr}\right|\le c_1(1+(r\buor)^2)^{\frac 32}.$$

We then have 
\begin{align*}
 \bar F(r,\bar u^+_r, \bar u^+_{rr})=&c\sqrt{1+(r \bar u^+_r)^2} +
 \bar u_r
 \left(\frac {2+ (r\bar u_r^+)^2}{1+(r\bar u_r^+)^2}\right)+\frac {r
   \bar u_{rr}^+}{1+(r\bar u_r^+)^2}\\
 =&c\sqrt{1+\left(\frac {-A}r+r\buor\right)^2} +
 \left(\frac{-A}{r^2}+\buor\right)\left(\frac {2+\left(\frac
       {-A}r+r\buor\right)^2}{1+\left(\frac
       {-A}r+r\buor\right)^2}\right)\\
 &+ \frac{\frac {2A}{r^2}+r(\buo)_{rr}}{1+\left(\frac
     {-A}r+r\buor\right)^2}\\
 \le &c( 1+ \frac A r + r|\buor|) + \buor \left(\frac {2+\left(\frac
       {-A}r+r\buor\right)^2}{1+\left(\frac
       {-A}r+r\buor\right)^2}\right) \\
 &-\frac A {r^2} \left(\frac {\left(\frac
       {-A}r+r\buor\right)^2}{1+\left(\frac
       {-A}r+r\buor\right)^2}\right) + c_1 \frac
 {(1+(r\buor)^2)^{\frac 32}}{1+\left(\frac {-A}r+r\buor\right)^2}.
\end{align*}
Using \eqref{eq:hypu0}, we can write
$$\buor \frac {2+\left(\frac
       {-A}r+r\buor\right)^2}{1+\left(\frac
       {-A}r+r\buor\right)^2}\le 2C_0$$
we get
\begin{align*}
 \bar F(r,\bar u^+_r, \bar u^+_{rr})\le& c(1+ \frac A r +C_0r) + 2 C_0 -\frac
 A {r^2} \left(\frac {\left(\frac
       {-A}r+r\buor\right)^2}{1+\left(\frac
       {-A}r+r\buor\right)^2}\right) + c_1 \frac
 {(1+(r\buor)^2)^{\frac 32}}{1+\left(\frac {-A}r+r\buor\right)^2}\\
\le& c(1+ \frac A r +C_0r) + 2C_0 -\frac
 A {r^2} \left(\frac {\left(\frac
       {-A}r+r\buor\right)^2}{1+\left(\frac
       {-A}r+r\buor\right)^2}\right) + c_1 \frac
 {(1+r|\buor|)^{3}}{1+\left(\frac {-A}r+r\buor\right)^2}.
\end{align*}

We now set $\rho$ such that $r\buor=\rho\frac A r$ and distinguish two
cases:
\medskip

\noindent{\bf Case 1: $\frac 1 2 <\rho <2$.} In this case, 
\begin{align*}
\bar F(r,\bar u^+_r, \bar u^+_{rr})\le& c(1+ 2 C_0r +C_0r) +2C_0 + c_1\left(
 1+r|\buor|\right)^3\\
\le&c+ 3 c C_0 r+2C_0+ 4 c_1+ 4c_1r^3|\buor|^3\\
\le&c+ 3 c C_0 r+2C_0+ 4 c_1+ 4c_1 \rho \frac A r r^2C_0^2 \\
\le&(c+2C_0 + 4 c_1)+ r( 3 cC_0 + 8c_1 C_0^2 A)  \\
\end{align*}
where for the second line, we have used the fact that for $a,b\ge 0$,
$(a+b)^3\le 4 (a^3+b^3)$. On the other hand, we have $r\bar u_t^+=A'+rB'$.
Choosing $\bar C\ge \max (c+2C_0 + 4 c_1, 3 C_0+ 8c_1 C_0^2)$ we
get the desired result in this case.
\bigskip

\noindent{\bf Case 2: $\rho\le \frac 1 2$ or $\rho \ge2$.} In this case 
$$
\frac {(1+r|\buor|)^{3}}{1+\left(\frac {-A}r+r\buor\right)^2}\le\frac{
 4+4 r^3|\buor |^3}{1+(\rho-1)^2\frac {A^2}{r^2}} \le 4 + 4 \frac
{\rho^2r|\buor|}{(\rho-1)^2}\le 4+ 16 C_0 r.
$$
Then 
$$ 
\bar F(r,\bar u^+_r, \bar u^+_{rr})\le c+ c\frac A r +2C_0 + 4c_1+
c C_0r +16c_1C_0r -\frac A {r^2} \frac {(\rho-1)^2 \left(\frac
   {A}r\right)^2}{1+(\rho-1)^2\left(\frac {A}r\right)^2}
$$
We distinguish two sub-cases:
\medskip

\noindent{\sc Subcase 2.1: $\frac A r \le 2$}.
In this sub-case, we get 
$$ 
\bar F(r,\bar u^+_r, \bar u^+_{rr})\le(3c+ 2C_0+4c_1)+r(cC_0+16c_1C_0)
$$
and we obtain the desired result taking $\bar C\ge\max(3c+
2C_0+4c_1,cC_0+16c_1C_0)$.

\medskip
\noindent{\sc Subcase 2.2: $\frac A r \ge 2$}.
In this subcase, $|\rho-1|\frac A r\ge 1$ and
$$\frac {(\rho-1)^2 \left(\frac
       {A}r\right)^2}{1+(\rho-1)^2\left(\frac
       {A}r\right)^2}\ge \frac 1 2 $$
and thus
\begin{align*}
 \bar F(r,\bar u^+_r, \bar u^+_{rr})\le& (c+2 C_0 + 4c_1) + A(\frac c
 r- \frac 1 {2r^2})  +cC_0r+16c_1C_0r  \\
 \le &(c+2C_0+ 4c_1)+ (dA +cC_0+16c_1C_0)r
\end{align*}
where for the last line we have used the fact that we can find
$d >0$ (only depending on $c$) such that $\frac c r-
\frac {1}{2r^2} \le d r$ for all $r >0$.  We finally get the desired
result taking $\bar C\ge\max(c+2C_0+ 4c_1,cC_0+16c_1C_0,d)$.  The
proof is now complete.
\end{proof}

\begin{pro}[Time H\"older estimate -- (I)]\label{pro:holder1}
 Let $\buo\in {\color{black} W^{2,\infty}_{loc}(0,+\infty)}$ satisfying
 \eqref{eq:hypu0}. Let $\bar u$ be a solution of
 \eqref{eq::4a},\eqref{eq::4b} satisfying \eqref{baru:baru0sst}. If
 $\bar u$ is $L_0$-Lipschitz continuous with respect to the variable
 $r$, then there exists a constant $C$, depending only on $C_0$ and
 $L_0$ such that
$$
|\bar u(t,r)-\buo(r)|\le C\sqrt t + B(t)
$$
where $B$ is defined in Proposition \ref{prop:barriers2}.
\end{pro}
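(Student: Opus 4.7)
The approach is to combine the pointwise barrier from Proposition~\ref{prop:barriers2} with the spatial Lipschitz regularity of $\bar u$ and $\bar u_0$, splitting according to whether $r$ is larger or smaller than $\sqrt{t}$.

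The first step is to apply the comparison principle (Theorem~\ref{thm comp spirales sous lin en r}) to $\bar u$ and the barriers $\bar u^{\pm}(t,r) = \bar u_0(r) \pm \bar C t/r \pm B(t)$ supplied by Proposition~\ref{prop:barriers2}. These barriers are admissible: the super-solution $\bar u^+$ satisfies $\bar u^+ - \bar u_0 = \bar C t / r + B(t) \ge 0 \ge -C_1$, while $\bar u^-$ satisfies $\bar u^- - \bar u_0 \le 0 \le C_1$; meanwhile $\bar u$ obeys the growth condition \eqref{baru:baru0sst} by assumption, and $\bar u^{\pm}(0,r) = \bar u_0(r) = \bar u(0,r)$. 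The comparison principle then yields the pointwise estimate
$$
|\bar u(t, r) - \bar u_0(r)| \le \frac{\bar C t}{r} + B(t), \qquad t \in [0,T),\ r > 0.
$$

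For $r \ge \sqrt{t}$ this already gives $|\bar u(t,r) - \bar u_0(r)| \le \bar C \sqrt{t} + B(t)$, so it remains to handle the range $0 < r < \sqrt{t}$ where the above bound degrades. I would propagate the estimate from the reference point $(t, \sqrt{t})$, which lies in the good range and therefore satisfies $|\bar u(t,\sqrt{t}) - \bar u_0(\sqrt{t})| \le \bar C \sqrt{t} + B(t)$, to the point $(t,r)$ via the triangle inequality, using the $L_0$-Lipschitz regularity of $\bar u(t,\cdot)$ and the $C_0$-Lipschitz regularity of $\bar u_0$ that follows from \eqref{eq:hypu0}:
\begin{align*}
|\bar u(t, r) - \bar u_0(r)|
  &\le |\bar u(t,r) - \bar u(t,\sqrt{t})| + |\bar u(t,\sqrt{t}) - \bar u_0(\sqrt{t})| + |\bar u_0(\sqrt{t}) - \bar u_0(r)| \\
  &\le L_0 (\sqrt{t} - r) + \bar C \sqrt{t} + B(t) + C_0 (\sqrt{t} - r) \\
  &\le (L_0 + C_0 + \bar C)\, \sqrt{t} + B(t).
\end{align*}
Combining the two cases yields the claim with $C = L_0 + C_0 + \bar C$, depending only on $L_0$ and $C_0$ (through $\bar C$).

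There is no serious obstacle in this argument. The only delicate point is checking that the (unbounded) barriers from Proposition~\ref{prop:barriers2} are admissible in the comparison principle, which is immediate because Theorem~\ref{thm comp spirales sous lin en r} only imposes one-sided growth conditions on each of the sub- and super-solutions, conditions trivially met by barriers whose deviation from $\bar u_0$ has a definite sign. The conceptual content of the proof is simply that spatial Lipschitz regularity lets us trade the $\bar C t/r$ singularity of the barrier near $r=0$ for a shift in $r$ of size at most $\sqrt{t}$, which costs only $O(\sqrt{t})$.
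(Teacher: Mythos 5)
Your proof is correct and follows essentially the same route as the paper: both obtain $|\bar u(t,r)-\bar u_0(r)|\le \bar C t/r + B(t)$ from Proposition~\ref{prop:barriers2} and the comparison principle, and then use the spatial Lipschitz bounds $L_0$ and $C_0$ to trade the $t/r$ singularity for an $O(\sqrt t)$ shift, splitting at $r=\sqrt t$. The paper merely phrases the small-$r$ case by optimizing $(L_0+C_0)r_0+\bar C t/r_0$ at $r=0$ and then redistributing, whereas you propagate directly from the point $r=\sqrt t$; this is only a cosmetic difference in constants.
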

\begin{rem}\label{rem:holder}
Let us note that in Proposition \ref{prop:barriers2}, we can choose
$B(t)=\bar C t (1+\frac{\bar C}2 t)$. Hence,  we deduce from
Proposition \ref{pro:holder1} that there exists
$C>0$ such that for all $t \in [0,1]$, 
\begin{equation}\label{eq::rm3}
|\bar u(t,r)-\buo(r)|\le C\sqrt t.
\end{equation}

\end{rem}
\begin{proof}
 Let $r_0>0$. Using Proposition \ref{prop:barriers2} and the
 comparison principle, we deduce that there exists a
 constant $\bar C$ and a function $B$ such that
$$\left| \bar u(t,r_0)-\buo(r_0)\right|\le \bar C
\frac t{r_0} +B(t).$$
Since $\bar u$ is $L_0$-Lipschitz continuous in $r$, we also have
$$
|\bar u(t,0)-\bar u(t,r_0)|\le L_0r_0\quad{\rm and}\quad
|\buo(0)-\buo(r_0)|\le C_0 r_0.
$$

Combining the previous inequalities, we get that
$$
|\bar u(t,0)-\buo(0)|\le (L_0+C_0) r_0+ \bar C \frac t {r_0} + B(t).
$$
Taking the minimum over $r_0$ in the right hand side, we get that
$$
|\bar u(t,0)-\buo(0)|\le C_1 \sqrt t + B(t)
$$
with $C_1:=2\sqrt{(C_0+L_0)\bar C}$.

We finally deduce that
$$|\bar u(t,r)-\buo(r)|\le \min\left\{\bar C\frac t r +  B(t), C_1
 \sqrt t + B(t) +(L_0+C_0) r\right\}.$$ 
The desired result is obtained by remarking that, if $r\le \sqrt t$,
then   $C_1 \sqrt t + B(t) +(L_0+C_0) r\le (C_1+L_0+C_0)\sqrt t+ B(t)$, while
if $r\ge \sqrt t$, then $\bar C\frac t r +  B(t)\le \bar C \sqrt t+ B(t)$.
\end{proof}

The next proposition asserts that the previous proposition is still
true if we do not assume that $\bar u$ is Lipschitz continuous with
respect to $r$.
\begin{pro}[{\color{black} Existence and time} H\"older estimate -- (II)]\label{pro:holder2}
 Let $\buo\in {\color{black} W^{2,\infty}_{loc}(0,+\infty)}$ satisfying
 \eqref{eq:hypu0}. {\color{black} Then there exists a solution $\bar u$ of
 \eqref{eq::4a},\eqref{eq::4b} satisfying \eqref{baru:baru0sst}. Moreover
 there exists a constant $C$, depending only on $C_0$ such
 that}
$$|\bar u(t,r)-\buo(r)|\le C\sqrt t + B(t)$$
where $B$ is defined in Proposition \ref{prop:barriers2},
{\color{black}and there exists a constant $L_0$ (only depending on $C_0$) such that
$$|\bar u(t,r+\rho)-\bar u(t,r)|\le L_0|\rho|.$$}
\end{pro}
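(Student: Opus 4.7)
\noindent\textbf{Proof proposal for Proposition \ref{pro:holder2}.}

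The strategy is a regularization argument: I approximate $\bar u_0$ by a sequence of initial data which satisfy the compatibility condition~\eqref{cond:compatibilite}, apply Theorem~\ref{th:cauchy} together with Propositions~\ref{prop:gradestim} and~\ref{pro:holder1} to obtain smooth solutions with estimates that are uniform in the regularization parameter, and finally pass to the limit by stability of viscosity solutions.

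More precisely, recall that the compatibility condition \eqref{cond:compatibilite} forces $(\bar u_0)_r(0^+)=-c/2$. I therefore set $a := -c/2-(\bar u_0)_r(0^+)$ and define, for $\varepsilon>0$,
$$
\bar u_0^\varepsilon(r) = \bar u_0(r) + a\,r\,\eta(r/\varepsilon),
$$
where $\eta\in C^\infty_c([0,+\infty))$ is a fixed cutoff with $\eta\equiv 1$ on $[0,1]$ and $\eta\equiv 0$ on $[2,+\infty)$. A direct computation shows that $(\bar u_0^\varepsilon)_r(0^+)=-c/2$, that $|a|\le |c|/2+C_0$, and that the correction term and its first derivative are bounded (the factor $r$ absorbs one $\varepsilon^{-1}$ from $\eta'$). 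A further mollification localized near $r=0$ produces $\bar u_0^\varepsilon\in W^{2,\infty}_{\loc}(0,+\infty)$ with $(\bar u_0^\varepsilon)_r\in W^{1,\infty}(0,+\infty)$, uniform bounds
$$
\|(\bar u_0^\varepsilon)_r\|_\infty \le C,\qquad \|\kappa_{\bar u_0^\varepsilon}\|_\infty \le C
$$
depending only on $C_0$ and $c$, locally uniform convergence $\bar u_0^\varepsilon\to\bar u_0$, and the compatibility condition \eqref{cond:compatibilite} (with some constant possibly depending on $\varepsilon$, which is harmless below). Verifying these uniform bounds on $\kappa_{\bar u_0^\varepsilon}$, despite the fact that $(\bar u_0^\varepsilon)_{rr}$ may blow up like $1/\varepsilon$ near the origin, uses the favorable factor $r$ multiplying $u_{rr}$ in the formula for $\kappa$; this is the technical point on which the construction hinges.

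For each $\varepsilon>0$, Theorem~\ref{th:cauchy} then produces a classical solution $\bar u^\varepsilon$ of \eqref{eq::4a},\eqref{eq::4b} starting from $\bar u_0^\varepsilon$. Applying Proposition~\ref{prop:gradestim} yields a space-Lipschitz bound $\|\bar u^\varepsilon_r\|_\infty\le L_0$ that is uniform in $\varepsilon$ (since $L_0$ depends only on the Lipschitz bound of $\bar u_0^\varepsilon$, hence only on $C_0$ and $c$). Proposition~\ref{prop:barriers2}, applied to $\bar u_0^\varepsilon$ whose derivatives satisfy~\eqref{eq:hypu0} with a constant depending only on $C_0$, produces barriers $\bar u_0^\varepsilon(r)\pm\bigl(\bar Ct/r+B(t)\bigr)$ with $\bar C$ and $B$ independent of $\varepsilon$; combined with the comparison principle of Theorem~\ref{thm comp spirales sous lin en r}, this yields \eqref{baru:baru0sst} for $\bar u^\varepsilon$ uniformly. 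Finally, since $\bar u^\varepsilon$ is uniformly $L_0$-Lipschitz in $r$, Proposition~\ref{pro:holder1} gives
$$
|\bar u^\varepsilon(t,r)-\bar u_0^\varepsilon(r)|\le C\sqrt t + B(t)
$$
with $C$ depending only on $C_0$ and $L_0$, hence only on $C_0$.

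The family $(\bar u^\varepsilon)$ is therefore equicontinuous and locally uniformly bounded on $[0,+\infty)\times(0,+\infty)$: uniformly $L_0$-Lipschitz in $r$ and uniformly $\tfrac12$-H\"older in $t$ on any compact time interval. Arzel\`a--Ascoli (or the half-relaxed limits of Barles--Perthame) allows me to extract a locally uniformly convergent subsequence whose limit $\bar u$ is, by standard stability of viscosity solutions under uniform convergence, a viscosity solution of \eqref{eq::4a}. The locally uniform convergence $\bar u_0^\varepsilon\to\bar u_0$ together with the uniform estimate above shows that $\bar u(0,r)=\bar u_0(r)$ and that $\bar u$ inherits both the space Lipschitz bound $L_0$ and the time estimate $C\sqrt t+B(t)$. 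The main obstacle in this plan is the construction in the first paragraph, namely producing the approximation $\bar u_0^\varepsilon$ which simultaneously satisfies the compatibility condition and the uniform $W^{2,\infty}$-type bounds needed to keep the barrier constant $\bar C$ and the Lipschitz constants independent of $\varepsilon$; once this is granted, the remaining steps follow from results already established in the paper.
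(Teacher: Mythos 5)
Your overall strategy is the paper's: approximate $\bar u_0$ by data satisfying the compatibility condition~\eqref{cond:compatibilite}, invoke Proposition~\ref{pro:cauchy}/Theorem~\ref{th:cauchy}, get $\e$-uniform Lipschitz bounds from Proposition~\ref{prop:gradestim}, apply Proposition~\ref{pro:holder1}, and pass to the limit by stability and the comparison principle. The gap is in the first step, the construction of $\bar u_0^\e$, which is exactly the point you yourself flag as the hinge. First, the number $a:=-c/2-(\bar u_0)_r(0^+)$ need not be defined: hypothesis~\eqref{eq:hypu0} only gives $|(\bar u_0)_r|\le C_0$ and $r|(\bar u_0)_{rr}|\le C$ near the origin, which allows $(\bar u_0)_r$ to oscillate as $r\to 0^+$ (e.g. $(\bar u_0)_r(r)=\sin(\ln r)$ is compatible with bounded $\kappa_{\bar u_0}$), so $(\bar u_0)_r(0^+)$ may fail to exist. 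Second, even when it exists, the additive corrector $a\,r\,\eta(r/\e)$ only adjusts the value of the slope at $r=0$, whereas \eqref{cond:compatibilite} is the quantitative bound $|c+\kappa_{\bar u_0^\e}(r)|\le Cr$ on an interval $[0,r_0]$. For $r\le\e$ your modified datum has $\kappa_{\bar u_0^\e}(r)\approx 2\bigl((\bar u_0)_r(r)+a\bigr)+r(\bar u_0)_{rr}(r)$, and neither $(\bar u_0)_r(r)-(\bar u_0)_r(0^+)$ nor $r(\bar u_0)_{rr}(r)$ is $O(r)$ under \eqref{eq:hypu0}; in general they do not even tend to $0$, so $c+\kappa_{\bar u_0^\e}$ need not vanish at the origin at all. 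The "further mollification localized near $r=0$" is not specified and cannot be expected to repair this while keeping the slope pinned at $-c/2$, preserving the uniform bounds, and producing the linear rate in~\eqref{cond:compatibilite}; no argument is given.

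The paper avoids this by not correcting $\bar u_0$ additively but by replacing it wholesale near the origin: $\bar u_0^\e=\Psi_\e U_0+(1-\Psi_\e)\bar u_0$, where $U_0$ is a fixed smooth profile with $U_0(0)=\bar u_0(0)$, $(U_0)_r(0)=-c/2$, $|(U_0)_r|\le C_0$, $r|(U_0)_{rr}|\le C_0$, and $\Psi_\e$ is a cutoff at scale $\e$. Then on $[0,\e]$ one has $\bar u_0^\e=U_0$, whose smoothness up to $r=0$ yields $|c+\kappa_{\bar u_0^\e}(r)|\le Cr$ there (so \eqref{cond:compatibilite} holds with $r_0=\e$), and the Claim in the paper checks that \eqref{eq:hypu0} holds for $\bar u_0^\e$ with a constant depending only on $C_0$, because each derivative of $\Psi_\e$ is compensated by the matching power of $r$ and by $|U_0-\bar u_0|\le 2C_0 r$. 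If you substitute this interpolation construction for your corrector, the rest of your argument goes through as in the paper.
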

\begin{proof}
 The initial datum is approximated with a sequence of initial data
 satisfying \eqref{eq:hypu0} and the compatibility
 condition~\eqref{cond:compatibilite}; passing to the limit will
give the desired result.

We can assume without loss of generality that $C_0 \ge \frac{c}2$. 
Then we consider 
$$
\buo^\e=\Psi_\e U_0+(1-\Psi_\e)\buo
$$
where $U_0\in C^\infty$ is such that 
\begin{equation}\label{eq::rm5}
U_0(0)=\buo(0), \quad (U_0)_r(0)=-\frac c2,\quad |(U_0)_r|\le C_0,\quad
r|(U_0)_{rr}|\le C_0 \quad {\rm for}\quad  r\le 2
\end{equation}
and 
$$
\Psi_\e(r)=\Psi_1\left(\frac r \e\right)
$$
where the non-increasing function $\Psi_1\in C^\infty$ satisfies
$$
\Psi_1=\left\{\begin{array}{lll}
1&{\rm if}& r\le 1,\\
0&{\rm if}& r\ge 2.
\end{array}\right.
$$
\begin{claim}\label{claim}
 The initial condition $\buo^\e$ satisfies the compatibility
 condition~\eqref{cond:compatibilite} and \eqref{eq:hypu0} for some
 constant $C_0$ which does not depend on $\e$.
\end{claim}
Let $u^\e$ denote the unique solution of \eqref{eq::4a} with initial
condition $\buo^\e$ given by Proposition~\ref{pro:cauchy}, 
{\color{black} using the barrier (Proposition \ref{prop:barriers2}) provided by the Claim \ref{claim}.}
In
particular, $u^\e$ satisfies \eqref{baru:baru0sst} for some constant $\bar
C^\e$ depending on $\e$. Using Proposition~\ref{prop:gradestim}, we
deduce that $\bar u^\e$ is $L_0$-Lipschitz continuous with
$L_0:=\max(1,C_0)$. Then Proposition~\ref{pro:holder1} can be applied
to obtain the existence of a constant $C$ (depending only on $C_0$, 
{\color{black} because $L_0$ now depends on $C_0$}) such
that for all $\e$
$$|\bar u^\e(t,r)-\buo^\e(r)|\le C \sqrt t + B(t).$$ 
Taking $\e\to 0$ and using the stability of the solution and the
uniqueness of \eqref{eq::4a},\eqref{eq::4b}, we finally deduce the
desired result.
\end{proof}
We now prove the claim.
\begin{proof}[Proof of Claim~\ref{claim}.]
We have
$$(\buo^\e)_r=(\Psi_\e)_r (U_0-\buo)+\Psi_\e(U_0)_r+(1-\Psi_\e)(u_0)_r.
$$

Hence, since $(\Psi_\e)_r(0)=0$ and $\Psi_\e(0)=1$, we get
$$(\buo^\e)_r(0)=(U_0)_r(0)=-\frac c2$$
which means that $\buo^\e$ satisfies \eqref{cond:compatibilite1}. 
{\color{black} Using the fact that 
$\buo^\e\in W^{2,\infty}_{loc}$
and (\ref{eq::rm5}), we get \eqref{cond:compatibilite}.}

Since $U_0(0)=\buo(0)$ and $U_0$ and $\buo$ are $C_0$-Lipschitz
continuous, we have
$$
|U_0(r)-\buo(r)|\le 2 C_0 r.
$$
Let $c_1$ denote $ \sup_{\rho\ge 0} \rho
|(\Psi_1)_r(\rho)|<+\infty$. We then have
$$
|(\Psi_\e)_r (U_0-\buo)|\le 2 C_0 \frac r \e
\left|(\Psi_1)_r\left(\frac r\e\right)\right|  \le 2C_0 c_1.
$$
Hence 
$$
|(\buo^\e)_r|\le 2C_0(c_1+1).
$$

Let us now obtain an estimate on $\kappa_{\buo^\e}$. Using the previous
bound, we only have to estimate 
$$
\frac
{r(\buo^\e)_{rr}}{\left(1+(r(\buo^\e)_r)^2\right)^{\frac 32}}.
$$
If $r >2$, then $\buo^\e=\buo$ and the estimate follows from \eqref{eq:hypu0}.
If $r\le 2$, it is enough to estimate $r(\buo^\e)_{rr}$. We have
$$
r(\buo^\e)_{rr}=r(\Psi_\e)_{rr}(U_0-\buo)+2r(\Psi_\e)_r((U_0)_r-\buor)+r\Psi_\e
(U_0)_{rr}+r(1-\Psi_\e)(\bar u_0)_{rr}.
$$
Moreover there exists a constant $c_2$ (depending only on $C_0$) such
that for all $r \le 2$, $r|(\bar u_0)_{rr}|\le c_2$. Let $c_3$ denote
$\sup_{\rho\ge 0} \rho ^2|(\Psi_1)_{rr}(\rho)|<+\infty$. We then
have
$$
r|(\Psi_\e)_{rr} (U_0-\buo)|\le 2 C_0 \frac {r^2} {\e^2}
\left|(\Psi_1)_{rr}\left(\frac r\e\right)\right|  \le 2C_0 c_3.
$$
We finally deduce that for $r\le 2$,
$$
|r(\buo^\e)_{rr}|\le 2C_0c_3+4C_0c_1+C_0+c_2
$$
which proves that $\buo^\e$ satisfies \eqref{eq:hypu0} with a constant
$\bar C_0=2C_0c_3+4C_0c_1+C_0+c_2$ depending only on $C_0$. 
\end{proof}

We now turn to the proof of Theorem \ref{th:cauchywc}.

\begin{proof}[Proof of Theorem \ref{th:cauchywc}]
 {\color{black} The existence of $\bar u$ and its Lipschitz
 continuity with respect to $r$ follows from
 Proposition~\ref{pro:holder2}.
The uniqueness (and continuity) of $\bar u$ 
follows from the comparison principle
 (Theorem~\ref{thm comp spirales sous lin en r}).}
 Let us now prove that $\bar u$ is $\frac 12$-H\"older continuous
 with respect to time. By Remark \ref{rem:holder}, there exists a
 constant $C$ such that for $h\le 1$
$$|\bar u(h,r)-\buo(r)|\le C \sqrt h.$$
{\color{black} with $C$ given in (\ref{eq::rm3}).
Proceeding as in Step 2 of the proof of Proposition \ref{prop:gradestim}, we get for $0\le h\le 1$:
$$\bar u(t+h, r)-\bar u(t,r)\le C\sqrt h.$$
The reverse inequality is obtained in the same way. 
This implies (\ref{eq::rm1}). The proof is now complete.}
\end{proof}

\appendix

\section{Appendix: proofs of technical lemmas}
\label{app:a}
\renewcommand{\theequation}{A.\arabic{equation}}
\setcounter{theo}{0}
\setcounter{equation}{0}
\renewcommand{\thesubsection}{A.\arabic{subsection}}

\begin{proof}[Proof of Lemma \ref{lem:psi}]

We look for $\psi$ under the following form: for $x,\theta \in \R$,
$$
\psi (x,\theta) = (1-\iota (x)) (x,e^{i\theta}) + \iota (x) (0,e^{x+i\theta})
$$
where $\iota:\R \to \R$ is {\color{black} non-decreasing}, smooth ($C^\infty$) and such that
$\iota(x)=0$
if $x \le 0$ and $\iota(x) = 1$ if $x \ge 1$. Remark that \eqref{eq:ass-psi}
and \eqref{psi:add} are readily satisfied.

It remains to prove \eqref{psi:grad-inf} and \eqref{psi:grad-inf-bis}.
Let us first find $\eps>0$ and $m_\psi>0$ such that for all
$x,y,\theta,\sigma$ such that $|(x,\theta)-(y,\sigma)| \le \eps$,
we have \eqref{psi:grad-inf} and
\eqref{psi:grad-inf-bis}.

\paragraph{Study of \eqref{psi:grad-inf}.} It is convenient to use the
following notation: $\psi (x,\theta) = (\phi_1 (x), \phi_2 (x)
e^{i\theta})$.  We first write \eqref{psi:grad-inf} in terms of
$\phi_i$:
$$
 |\phi_1 (x) - \phi_1 (y) | + |\phi_2 (x) - \phi_2 (y) \cos
 (\theta-\sigma)|
 + \phi_2 (y) |\sin (\theta - \sigma)|
 \ge m_\psi (|x-y| + |\theta-\sigma|)
$$
(we used a different norm in $\R^3$ and $m_\psi$ is changed accordingly).
It is enough to prove
\begin{multline*}
 |\phi_1 (x) - \phi_1 (y) | + |\phi_2 (x) - \phi_2 (y) |
 + \phi_2 (y) (|\sin| - 1 + \cos) (\theta - \sigma) \\
 \ge m_\psi (|x-y| + |\theta-\sigma|).
\end{multline*}
We choose $\eps \le 1$ and we remark that such an inequality
is clear if $x \le -1$ or $x \ge 2$.
Through a Taylor expansion and using the fact that $\phi_2 (y) \ge 1$, this
reduces to check that
$$
\min (\inf_{x \in (-1,2)} (|\phi_1'(x)|+ |\phi_2'(x)|), 1) \ge 2 m_\psi
$$
which reduces to
$$
\inf_{x \in {\color{black} (-1,2)}} \{ |\phi_1'(x)|+ |\phi_2'(x)| \} > 0.
$$
For $x$ far from $0$, a simple computation shows that $\phi_2'(x) \ge
\iota (x)e^x$ {\color{black} (for $x\ge 0$)}
and this permits us to conclude. For $x$ in a neighborhood of $0$,
$\phi_1'(x) = 1 +o(1)$
and $\phi_2'(x) = O (x)$ and we can conclude in this case too.
{\color{black} In $[-1,2]\backslash [0,1]$, the conclusion is straightforward.}

\paragraph{Study of \eqref{psi:grad-inf-bis}.}
We next write \eqref{psi:grad-inf-bis} in terms of $\phi_i$
\begin{multline}\label{app:eq1}
|\Phi (x,y) + \phi'_2 (x) \phi_2 (y) (1 -\cos (\theta -\sigma))|
+ |\phi_2 (x) \phi_2 (y)| |\sin (\theta-\sigma)| \\
\ge {\color{black} m_\psi} (|x-y| + |\theta -\sigma|)
\end{multline}
where
$$
\Phi (x,y) = \phi_1'(x) (\phi_1 (x) - \phi_1 (y)) + \phi_2'(x) ( \phi_2
(x) - \phi_2 (y)).
$$
Once again, {\color{black} the previous inequality} is true for $x \notin (-1,2)$
and for $x \in (-1,2)$, we choose $m_\psi$ such that
$$
\inf_{x \in (0,1)} \{ {\color{black} (\phi_1'(x))^2 + (\phi_2'(x))^2} \} \ge 2 m_\psi.
$$
The same reasoning as above applies here too.

\paragraph{Reduction to the case: $|(x,\theta) -(y,\sigma)|\le \eps$.}
It remains to prove that for $\eps>0$ given, we can find $\delta_0>0$
such that, as soon as $|\psi(x,\theta) -\psi(y,\sigma)| \le \delta_0$
and $|\theta-\sigma|\le \frac\pi2$, then $|(x,\theta)-(y,\sigma)|\le
\eps$. We argue by contradiction by assuming that there exists
$\eps_0>0$ and two sequences $(x_n,\theta_n)$ and $(y_n,\sigma_n)$
such that
\begin{eqnarray*}
|\theta_n - \sigma_n | \le \frac\pi2 \\
|x_n-y_n|+ |\theta_n -\sigma_n| \ge \eps_0 \\
\phi_1 (x_n) - \phi_1 (y_n)  \to 0 \\
\cos(\theta_n -\sigma_n) \phi_2 (x_n)- \phi_2 (y_n) \to 0 \\
\phi_2 (x_n) \sin (\theta_n-\sigma_n) \to 0
\end{eqnarray*}
as $n \to \infty$. Since $\phi_2$ is bounded from below by $1$, we
deduce that $\sin (\theta_n -\sigma_n) \to 0$. Up to a subsequence, we
can assume that $\theta_n - \sigma_n \to \delta$ and we thus deduce
that $\delta = 0$. Hence, $|x_n -y_n| \ge \frac{\eps_0}2$ for large
$n$'s.  Thanks to a Taylor expansion in $\theta_n - \sigma_n$, we can
also get that $\phi_2 (x_n) - \phi_2 (y_n) \to 0$. Because
$|x_n-y_n|\ge \frac{\eps_0}2$, we then get that $x_n$ and $y_n$ remain
in a bounded interval. We can thus assume that $x_n \to x_*$ and $y_n
\to y_*$. Finally, we have $\phi_i (x_*) = \phi_i (y_*)$ for $i=1,2$
and {\color{black} $|x_*-y_*|\ge \frac{\eps_0}2$} which is impossible. The proof of
the lemma is now complete.

\end{proof}

\begin{proof}[Proof of Lemma \ref{lem:est}]
The second estimate is satisfied if $C_2 $ is chosen such that
$$
C_2 \ge \sup_{r>0}  \left(r - \left(r-\frac \pi 3\right)_+^2\right) . 
$$
We now prove the first estimate. We distinguish three cases:

\paragraph{Case 1: $x\le 1$ and $y\le 1$.}
In this case, $e^x$ and $e^y$ are bounded and the definition of $u_0$ 
in terms of the Lipschitz continuous function $\bar u_0$ implies
$$
|u_0(x)-u_0(y)|\le C
$$
for some constant $C>0$. 

\paragraph{Case 2: ($x\le 1$ and $y\ge 1$) or ($x\ge 1$ and $y\le 1$).}
The two cases can be treated similarly and we assume here that $x\le
1$ and $y\ge 1$. 
In that case $\psi(x,\theta)=(a,b)$ with $a\in \R$ and $b\in \C$
with $|b|\le e$ (see \eqref{psi:add}) and
$\psi(y,\sigma)=(0,e^{y+i\sigma}).$ Moreover, there exists a
constant $C$ such that
$$|u_0(x)-u_0(y)|\le C(1+e^y).$$
We also have
\begin{align*}
|\psi(x,\theta)-\psi(y,\sigma)|= & \sqrt{a^2
 +|e^{y+i\sigma}-b|^2}\\
\ge&|e^{y+i\sigma}-b|\\
\ge& e^{y}-|b|\\
\ge& e^y-e.
\end{align*}
Hence,
\begin{eqnarray*}
|u_0(x)-u_0(y)|&\le & C (1+e) + C(e^y-e)\\
&\le& C (1+e)+ C^2 \eps e^{-Kt} + \frac{e^{Kt}}{4\eps} (e^y-e)^2 \\
&\le& C(1+e+C)+\frac{e^{Kt}}{4\e}|\psi(x,\theta)-\psi(y,\sigma)|^2
\end{eqnarray*}
which gives the desired estimate.

\paragraph{Case 3: $x\ge 1$ and $y\ge 1$.}
In this case,
$$|\psi(x,\theta)-\psi(y,\sigma)|=|e^{x+i\theta}-e^{y+i\sigma}|\ge
 |e^x-e^y|$$
and
$$|u_0(x)-u_0(y)|\le L_{u_0} |e^x-e^y|,$$
where $L_{u_0}$ is the Lipschitz constant of $\bar u_0$.
Hence, $C_2$ is chosen such that 
$$
C_2 \ge \sup_{r>0} {\color{black} \left(L_{u_0} r - \frac{1}{4\eps} r^2\right)}.
$$
The proof is now complete.
\end{proof}

\paragraph{Acknowledgements.} The authors would like to thank Guy Barles
for fruitful discussions during the preparation of this article.

\def\cprime{$'$}

\end{document}